\newtheorem{theorem}{Theorem}[section]
\newtheorem{defi}{Definition}[section]
\newtheorem{proposition}{Proposition}[section]
\newtheorem{lemma}{Lemma}[section]
\def \rr {\mathbb{R}}
\def \rn {\mathbb{R}^n}
\def \nn {\mathbb{N}}
\def \eps {\epsilon}
\def \crit {2^\star}
\def \Pl {{\mathcal P}_\lambda}
\def \Mmx {M - \{p_1,...,p_N\}}
\def \ua {u_\alpha}
\def \va {v_\alpha}
\def \tpl {\tilde{\psi}_\lambda}
\def \F {{\mathcal F}}
\def \na {\nu_\alpha}
\def \za {z_\alpha}
\def \xaun {x_{\alpha,1}}
\def \xai {x_{\alpha,i}}
\def \maun {\mu_{\alpha,1}}
\def \mai {\mu_{\alpha,i}}
\def \tuaun {\tilde{u}_{\alpha,1}}
\def \tuai {\tilde{u}_{\alpha,i}}
\def \maj {\mu_{\alpha,j}}
\def \xaj {x_{\alpha,j}}
\def \tvap {\tilde{v}_{\alpha,p}}
\def \nap {\nu_{\alpha,p}}
\def \naq {\nu_{\alpha,q}}
\def \yap {y_{\alpha,p}}
\def \yaq {y_{\alpha,q}}
\def \zai {z_{\alpha,i}}
\def \zaj {z_{\alpha,j}}
\def \zal {z_{\alpha,l}}
\def \mal {\mu_{\alpha,l}}
\title{Sharp multiscale control for  high order nonlinear equations}
\author{Fr\'ed\'eric Robert}
\address{Fr\'ed\'eric Robert, Institut \'Elie Cartan, Universit\'e de Lorraine, CNRS, IECL, F-54000 Nancy, France}
\email{frederic.robert@univ-lorraine.fr}
\date{September 5th, 2025}
\subjclass[2020]{Primary 35J35, Secondary 35J60, 35B44, 35J08, 58J05}
\keywords{critical elliptic equations, instability, concentration phenomena, multibump description}
\begin{document}
\begin{abstract} We analyze the behavior of families $(\ua)_{\alpha>0}$ of solutions to the high-order critical equation $P_\alpha\ua=\Delta_g^k\ua +\hbox{lot}=|\ua|^{\crit-2}\ua$ on a Riemannian manifold $M$, with a uniform bound on the Dirichlet energy. We prove a sharp pointwise   control of the $\ua$'s by a sum of bubbles uniformly with respect to $\alpha\to +\infty$, that is
$|\ua|\leq  C\Vert u_\infty \Vert_\infty +C\sum_{i=1}^NB_{i,\alpha}$ where $u_\infty \in C^{2k}(M)$ and the $(B_{i,\alpha})_\alpha$, $i=1,...,N$ are explicit standard peaks. 
\end{abstract}
\maketitle

\section{Introduction and statement of the result}
Let $(M,g)$ be a smooth compact Riemannian manifold of dimension $n\geq 3$ without boundary, and let $k\in\nn$ be such that $2\leq 2k<n$. We consider families of function $(\ua)_{\alpha>0}\in C^{2k}(M)$ that are solutions to
\begin{equation}\label{eq:ua}
P_\alpha u_\alpha=|u_\alpha|^{\crit-2}u_\alpha\hbox{ in }M,
\end{equation}
where $\crit:=\frac{2n}{n-2k}$, and $(P_\alpha)_{\alpha>0}$ is a family of   elliptic symmetric differential operators of the type
\begin{equation}\label{model:P}
P:=\Delta_g^k+\sum_{i=0}^{k-1} (-1)^i\nabla^i(A^{(i)} \nabla^i)\hbox{ for }\alpha>0
\end{equation}
where $\Delta_g:=-\hbox{div}_g\nabla$ is the Riemannian Laplacian with minus-sign convention and for all $i=0,...,k-1$, $ A^{(i)}\in C^{i}_{\chi}(M,\Lambda_{S}^{(0,2i)}(M))$ is a $(0,2i)-$tensor field of class $C^i$ on $M$ such that for any $i$ , $A^{(i)}(T,S)=A^{(i)}(S,T)$ for any $(i,0)-$tensors $S$ and $T$. In coordinates, we mean that $\nabla^i(A^{(i)} \nabla^i)=\nabla^{q_i\cdots q_1}(A_{p_1\cdots p_i,q_1\cdots q_i}\nabla^{p_1\cdots p_i})$. The model for equations like \eqref{eq:ua} is
\begin{equation}\label{eq:lim}
\Delta_\xi^k U=|U|^{\crit-2}U\hbox{ in }\rn,
\end{equation}
where $\xi$ is the Euclidean metric on $\rn$. The positive solutions $U\in C^{2k}(\rn)$ to \eqref{eq:lim} are exactly (see Wei-Xu \cite{weixu}):
\begin{equation*}
X\mapsto U_{\mu, x_0}(X):=a_{n,k}\left(\frac{\mu }{\mu^2 +|X-X_0|^2}\right)^{\frac{n-2k}{2}}\hbox{ for }X\in\rn,
\end{equation*}
where $\mu >0$ and $X_0\in\rn$ are parameters and $a_{n,k}:=\left(\Pi_{j=-k}^{k-1}(n+2j)\right)^{\frac{n-2k}{4k}}$.

\smallskip\noindent Due to the critical exponent $\crit$, solutions to \eqref{eq:ua} may blow-up, that is $\max_M|\ua|\to\infty$ as $\alpha\to +\infty$. There are several ways to describe the blow-up. In the spirit of Lions \cite{PL2}, the family of measures $(|\ua|^{\crit}\, dv_g)_\alpha$ converges to a sum of Dirac masses and a density, see Mazumdar \cite{mazumdar:jde}. In the spirit of Struwe \cite{struwe:1984}, in the associated Sobolev space, $(\ua)_\alpha$ is a weak limit plus a sum of "peaks" modeled on solutions to \eqref{eq:lim}, see Mazumdar \cite{mazumdar:cpaa}. In the present paper, we improve these descriptions and prove a pointwise control by a sum of peaks modeled on the $U_{\mu,x_0}$'s:

\begin{theorem}\label{th:estim-co:intro} Let $(M,g)$ be a compact Riemannian manifold of dimension $n$ without boundary and let $k\in\nn$ be such that $2\leq 2k<n$. We consider a family of operators $(P_\alpha)_{\alpha>0}\to P_\infty$ of type (SCC) (see Definition \ref{def:scc} below) and a family of functions $(\ua)_\alpha\in C^{2k}(M)$ such that \eqref{eq:ua} holds for all $\alpha>0$. We assume that $\lim_{\alpha\to +\infty}\max_M|\ua|=+\infty$ and that there exists $\Lambda>0$ such that $\Vert \ua\Vert_{\crit}\leq \Lambda$ for all $\alpha>0$. Then there exist $u_\infty\in C^{2k}(M)$, $N\geq 1$, there exist $(z_{\alpha, 1})_\alpha,...,(z_{\alpha, N})_\alpha\in M$ such that $\lim_{\alpha\to +\infty}|\ua(\zai)|=+\infty$ for all $i=1,...,N$, and setting $\mai:=|\ua(\zai)|^{-\frac{2}{n-2k}}$, we have that
\begin{equation}
|\ua(x)|\leq C\Vert u_\infty \Vert_\infty+C\sum_{i=1}^N\left(\frac{\mai }{\mai^{2 }+d_g(x,\zai)^{2 }}\right)^{\frac{n-2k}{2}}\label{est:opti} 
\end{equation}
for all $x\in M$ and $\alpha\to +\infty$. Moreover, the $(z_{\alpha, i})_\alpha$'s are such that:
$$\lim_{\alpha\to +\infty}u_\alpha=u_\infty\hbox{ in }C^{2k}_{loc}\left(M-\left\{\lim_{\alpha\to +\infty}\zai,\, i=1,...,N\right\}\right);$$
for all $i,j\in \{1,...,N\}$ such that $i\neq j$, then
\begin{equation*}
\hbox{ either }\left\{ \frac{d_g(\zai,\zaj)}{\mai}\to+\infty\right\}\hbox{ or }\left\{ \frac{d_g(\zai,\zaj)}{\mai}\to c_{i,j}>0\hbox{ and }\maj=o(\mai)\right\}
\end{equation*}
as $\alpha\to + \infty$ and there exists $U_i \in C^{2k}(\rn)-\{0\}$ solution to \eqref{eq:lim} such that
\begin{equation*}
\tuai :=\mai^{\frac{n-2k}{2}}\ua(\hbox{exp}_{\zai}(\mai \cdot))\to U_i\hbox{ in }C^{2k}_{loc}(\rn-S_i)
\end{equation*}
where 
\begin{equation*}
S_i:=\left\{\lim_{\alpha\to \infty}\frac{\hbox{exp}_{\zai}^{-1}(\zaj)}{\mai}/\,j\in\{1,...,N\}-\{i\}\hbox{ such that }d_g(\zai,\zaj)=O(\mai)\right\}.
\end{equation*}
Finally, $U_i\in D_k^2(\rn)$, the completion of $C^\infty_c(\rn)$ for the norm $  \Vert \cdot\Vert_{D_k^2}:=\Vert \Delta_\xi^\frac{k}{2}\cdot\Vert_2$ (when $k=2l+1$ is odd, we  set $(\Delta_\xi^{\frac{k}{2}}u)^2=|\nabla \Delta_\xi^l u|^2$).
\end{theorem}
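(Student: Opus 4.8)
\emph{Overall strategy.} The plan is to combine a Struwe-type energy decomposition, which produces the weak limit and the bubbles, with an a priori pointwise bound obtained by iterating the Green's representation of the operators $P_\alpha$. The genuinely new difficulty compared with the second-order case $k=1$ is that $\Delta_g^k+\hbox{lot}$ carries no maximum or comparison principle, so every estimate must be routed through integral representations; moreover the limit $\alpha\to+\infty$ moves the operator itself, so all blow-up/rescaling arguments have to be run with the moving operator $P_\alpha$ and kept under control through the convergence $P_\alpha\to P_\infty$ built into (SCC).

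\emph{Step 1: energy decomposition.} Pairing \eqref{eq:ua} with $\ua$ gives $\int_M\ua P_\alpha\ua\,dv_g=\Vert\ua\Vert_{\crit}^{\crit}\leq\Lambda^{\crit}$, so by the coercivity in (SCC) the family $(\ua)_\alpha$ is bounded in $H^k(M)=W^{k,2}(M)$. The $H^k$-decomposition of Mazumdar \cite{mazumdar:cpaa} then yields, along a subsequence, a weak limit $u_\infty\in C^{2k}(M)$ solving $P_\infty u_\infty=|u_\infty|^{\crit-2}u_\infty$, an integer $N\geq1$, points $(\zai)_\alpha$, scales $\mai\to0$, and non-trivial solutions $U_i$ of \eqref{eq:lim}, such that $\ua$ equals $u_\infty$ plus $N$ rescaled bubbles up to an $H^k$-error going to $0$. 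Relabelling the bubbles by decreasing scale and running the usual selection procedure (discarding the redundant bubbles, re-centering each $\zai$ at a near-maximum point of $|\ua|$ inside the $i$-th bubble) produces the interaction dichotomy of the theorem and the normalization $\mai=|\ua(\zai)|^{-\frac2{n-2k}}$ with $|\ua(\zai)|\to+\infty$. Finally, pulling the Dirichlet energy back through $\hbox{exp}_{\zai}(\mai\cdot)$ — using $\hbox{exp}_{\zai}^*g(\mai\cdot)\to\xi$ in $C^{2k}_{loc}(\rn)$ — shows that $\tuai$ is bounded in $D_k^2(B_R(0))$ for each $R$; lower semicontinuity of the $D_k^2$-norm under weak convergence, together with $R\to+\infty$, gives $U_i\in D_k^2(\rn)$.

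\emph{Steps 2--3: weak bound, then sharp bound.} Write $B_{i,\alpha}(x):=\big(\mai/(\mai^2+d_g(x,\zai)^2)\big)^{\frac{n-2k}2}$, so that the target \eqref{est:opti} reads $|\ua|\leq C\Vert u_\infty\Vert_\infty+C\sum_i B_{i,\alpha}$. I would first prove a weaker bound: for every small $\eta>0$ there is $C_\eta$ with
\[
|\ua(x)|\leq C_\eta\Vert u_\infty\Vert_\infty+C_\eta\sum_{i=1}^N\mai^{\eta}\big(\mai^2+d_g(x,\zai)^2\big)^{-\frac{n-2k}4-\frac\eta2},
\]
a bound adapted to the bubble tree which is sharp at the centers of the bubbles but loses a small power of $\mai$ in the regions between consecutive scales. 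This is proved by contradiction: if it fails, one rescales $\ua$ at a suitable near-maximum point $\ya$ of the defect ratio at the scale $\na:=|\ua(\ya)|^{-\frac2{n-2k}}$, and interior elliptic estimates for the rescaled operators $\na^{2k}(\hbox{exp}_{\ya})_*P_\alpha\to\Delta_\xi^k$ produce a non-trivial $D_k^2(\rn)$-solution of \eqref{eq:lim} — a bubble whose energy is either unaccounted for in Step 1 (contradicting the $H^k$-exhaustion) or concentrated at some $\zai$ (contradicting the choice of $\ya$, since there the right-hand side above is $\sim|\ua|$). The exclusion of two bubbles of comparable scale at comparable distance is carried out by induction on the tree together with a Pohozaev identity for $P_\alpha$ on small geodesic annuli. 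With the weak bound in hand one upgrades to \eqref{est:opti} through the representation formula
\[
\ua(x)=\int_M G_\alpha(x,y)\,|\ua(y)|^{\crit-2}\ua(y)\,dv_g(y),
\]
valid since $P_\alpha$, being of type (SCC), possesses a Green's function $G_\alpha$ with the uniform bound $|G_\alpha(x,y)|\leq C\,d_g(x,y)^{2k-n}$, the uniformity following from $P_\alpha\to P_\infty$. Inserting the current bound into the right-hand side and using
\[
\int_M d_g(x,y)^{2k-n}B_{i,\alpha}(y)^{\crit-1}\,dv_g(y)\leq C\,B_{i,\alpha}(x),\qquad\int_M d_g(x,y)^{2k-n}\,dv_g(y)\leq C,
\]
the $\mai$-loss improves at each iteration (the exponent of $\mai$ being roughly multiplied by $\crit-1=\frac{n+2k}{n-2k}>1$), so after finitely many steps the loss disappears and \eqref{est:opti} holds. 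The two local convergences then follow at once: on a compact $K$ away from the limiting concentration set one has $B_{i,\alpha}\leq C\mai^{\frac{n-2k}2}\to0$, hence $|\ua|\leq C\Vert u_\infty\Vert_\infty$ on $K$, and \eqref{eq:ua} with elliptic regularity for $P_\alpha$ gives $\ua\to u_\infty$ in $C^{2k}_{loc}(K)$; rescaling \eqref{est:opti} at $\zai$ at scale $\mai$ bounds $\tuai$ on compact subsets of $\rn-S_i$, whence $\tuai\to U_i$ in $C^{2k}_{loc}(\rn-S_i)$.

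\emph{Main obstacle.} The decisive step is, in my view, the uniform-in-$\alpha$ weak pointwise estimate in the presence of several bubbles sitting over the same limit point at different scales, together with the sharp form of the dichotomy ($\maj=o(\mai)$ whenever $d_g(\zai,\zaj)=O(\mai)$). This forces both an induction on the bubble tree and a Pohozaev identity for $\Delta_g^k+\hbox{lot}$ on small geodesic annuli whose boundary terms must be estimated uniformly in $\alpha$, combining the Riemannian geometry with the convergence $P_\alpha\to P_\infty$; pushing the order-two Pohozaev machinery to arbitrary order $2k$ and to a varying operator is the technical heart of the argument.
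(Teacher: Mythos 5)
Your two-stage architecture (an intermediate pointwise bound with a small gain, then an upgrade through Green's representation and convolution estimates) is the same general scheme as the paper's, and your final iteration step and the convolution inequality you quote are fine. The genuine gap is the claimed proof of the intermediate bound with the $\mai^{\eta}$-gain by contradiction and rescaling. The regime in which that bound fails is not a new bubble: it is the scale-invariant ``neck'' behaviour $|\ua|\sim\eps_\alpha\, d_g(\cdot,\zai)^{-\frac{n-2k}{2}}$ with $\eps_\alpha\to 0$ arbitrarily slowly. At a maximizing point $\ya$ of the defect ratio one then has $|\ua(\ya)|=o\bigl(R_\alpha(\ya)^{-\frac{n-2k}{2}}\bigr)+O(1)$ by the weak exhaustion, hence $\na:=|\ua(\ya)|^{-\frac{2}{n-2k}}\gg R_\alpha(\ya)$: the nearest concentration point collapses onto the origin of the rescaled picture, the rescaled family carries no limiting energy away from that collapsing point, and no nontrivial $D_k^2(\rn)$-solution of \eqref{eq:lim} is produced; nothing contradicts the $H^k$-exhaustion or ``the choice of $\ya$''. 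This is precisely the point where every known proof injects a linear mechanism — for $k=1$ a comparison/maximum-principle or Harnack argument — and for $2k>2$, where none of these survive, the paper replaces it by uniform pointwise estimates for the Green's function of the perturbed operators $P_\alpha-V_\alpha$, with $V_\alpha=|\ua|^{\crit-2}$ controlled only as a sum of small Hardy potentials centered at the (possibly clustering) concentration points, together with the ordering of the points and the decomposition of $M$ into regions governed by the maximal ones. That linear theory (Sections \ref{sec:green1}--\ref{sec:lemma}, estimate \eqref{ineq:funda}, and its use in Section \ref{sec:strong}) is the technical heart of the result, and your proposal supplies no substitute for it.

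Two further points. First, the dichotomy ($\maj=o(\mai)$ whenever $d_g(\zai,\zaj)=O(\mai)$) neither needs nor plausibly follows from a Pohozaev identity on small annuli with uniformly controlled boundary terms: controlling those boundary terms uniformly in $\alpha$ would already require the pointwise estimates you are trying to prove, so the argument risks circularity. In the paper the dichotomy is a bookkeeping consequence of the two-stage extraction of the concentration points (the families $(\xai)_\alpha$ and $(\yap)_\alpha$ of Section 3), not a balance law. Second, the Struwe--Mazumdar $H^k$-decomposition invoked in your Step 1 does not by itself yield the pointwise exhaustion $R_\alpha^{\frac{n-2k}{2}}|\ua-u_\infty|\to 0$ away from the bubbles, which is exactly what makes $V_\alpha$ a small Hardy potential; this must be established by a pointwise selection procedure (the paper's Section 3), and your ``usual selection procedure'' glosses over it.
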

The sum on the right-hand-side of \eqref{est:opti} correspond to the sum of peaks of the Struwe decomposition, which correspond to the Dirac masses in the Lions decomposition.

\smallskip\noindent For $k=1$, this theorem has been proved by Premoselli \cite{premoselli} for general elliptic operators, including non-coercive ones. For $k=1$ and $\ua>0$, the theorem is due to Druet-Hebey-Robert \cite{DHR} and can be adapted to sign-changing solutions (see Ghoussoub-Mazumdar-Robert \cites{gmr:memams,gmr:jde}). The result  was applied by Druet \cite{druet:jdg} to prove compactness of positive solutions to \eqref{eq:ua} far from the geometric model, and by Premoselli-V\'etois \cites{premo-vetois-jmpa,premo-vetois-aim,premo-vetois-eigen} and Premoselli-Robert \cite{PR} for sign-changing solutions. The result was also written for second-order elliptic systems by Druet-Hebey \cite{DH}. Interesting phenomena have been observed by Cheikh-Ali-Premoselli \cite{CA-P} for manifolds with boundary. See Hebey \cite{hebey.eth} for a general exposition on these methods.

\smallskip\noindent There is quite a long history of pointwise controls like in Theorem \ref{th:estim-co:intro}. The pioneer works were performed for $k=1$ and $N=1$ by Atkinson-Peletier \cite{AP}, Z.-C.Han \cite{zchan} and Hebey-Vaugon \cite{HV} for positive solutions. Still for $k=1$, when the operator $P_\alpha$ is the conformal Laplacian and the solutions are positive, Schoen \cites{schoen-montecatini,schoen1988} has introduced the method of simple blow-up points. This powerful method has been paramount to tackle the prescription of scalar curvature and to prove compactness of the solutions to the Yamabe equation: see Schoen-Zhang \cite{sz}, Chen-C.-S.Lin \cites{cs:lin:1,cs:lin:2,cs:lin:3}, Li \cites{li-95,li-96}, Li-Zhu \cite{LiZhu}, Li-Zhang \cites{LiZha3,LiZhang2005}, Druet \cite{druet:imrn}, Druet-Laurain \cite{druet:laurain}, Marques \cite{Mar} and Khuri-Marques-Schoen \cite{kms}. This technique proves isolation of the concentration points for geometric blowing-up solutions, and a bound on the $L^{\crit}-$norm. Therefore, it does not apply to more general situations where concentration points accumulate or when there is no apriori bound on the $L^{\crit}-$norm, see the examples of Robert-V\'etois \cite{RV:jdg}. 

\smallskip\noindent In the above mentioned papers, the proofs are specific to second order problems, that is $k=1$. For instance, some use the pointwise maximum principle and the Harnack inequality, which are not valid for $k>1$ in general.

\smallskip\noindent Regarding higher order, for $k=2$, partial estimates have been proved by Hebey, Wen and the author \cites{hebey:2003,HRW,HR} for specific operators and positive solutions. Li-Xiong \cite{lx} and Gong-Kim-Wei \cite{gkw}  have obtained beautiful compactness results for $5\leq n\leq 24$ (this does not hold for $n\geq 25$, see Wei-Zhao \cite{wz}) by adapting the method of simple blowup points of Khuri-Marques-Schoen \cite{kms}: the solutions are positive, the operator $P_\alpha$ is the geometric Paneitz operator and it is assumed that its Green's function is positive. As for $k=1$, an intermediate step is to prove that the blow-up points are isolated. Using the powerful method of Premoselli \cite{premoselli}, Carletti \cite{carletti} has proved a refined pointwise estimate for $k>1$ and $N=1$ when the coefficients of the operator are unbounded.

\smallskip\noindent Our objective is to develop a flexible approach that tolerates sign-changing solutions and general operators. This is natural since there are not many examples of operators with positive Green's function and for many applied problems, like the clamped plate equation in mechanics, the Green's function of the operator changes sign and so do the solutions to the problem. See Grunau-Robert \cite{gr:arma} and Grunau-Robert-Sweers \cite{grs} for qualitative estimates of the sign-change.

\smallskip\noindent The general idea here is to write the nonlinear equation \eqref{eq:ua} as the linear equation
\begin{equation}\label{model:linear}
(P_\alpha-V_\alpha)\ua=0,
\end{equation}
where $V_\alpha:=|\ua|^{\crit-2}$, and to represent $\ua$ in terms of the Green's function of the operator $P_\alpha-V_\alpha$. Although this operator has unbounded coefficients, it turns that $V_\alpha$ behaves like a small Hardy potential far from the $N$ concentration points. In the case of a sole concentration point, that is $N=1$, this was performed by the author in \cite{robert:gjms} (see Premoselli-Robert \cite{PR} for $k=1$): the most delicate part is to obtain pointwise estimates for the corresponding Green's function. In the present paper, we generalize \cite{robert:gjms} to arbitrary $N$ points. In order to tackle the potential accumulation of these points, we construct an order on the concentration points and we decompose the manifold $M$ in several subdomains on which only the maximal elements for the order have an influence. Finally, we write a representation formula on each of these domains. A large portion of the analysis is devoted to pointwise controls of the Green's function enjoying multiple Hardy potentials.

\smallskip\noindent The method that is developed in this paper can be adapted to several contexts, as long as one can write a nonlinear problem as a linear problem like \eqref{model:linear} where $V_\alpha$ is a small Hardy-type potential.

\section{Preliminary material and notations}

\begin{defi}\label{def:scc} We say that $(P_\alpha)_{\alpha>0}\to P_\infty$ is of type (SCC) if $(P_\alpha)_\alpha$ and $P_\infty$ are differential operators such that $P_\alpha:=\Delta_g^k+\sum_{i=0}^{k-1} (-1)^i\nabla^i(A^{(i)}_\alpha \nabla^i)$ for all $\alpha>0$ and $P_\infty:=\Delta_g^k+\sum_{i=0}^{k-1} (-1)^i\nabla^i(A^{(i)}_\infty \nabla^i)$ are as in \eqref{model:P} and $\theta\in (0,1)$ are such that \par
\noindent$\bullet$ for all $i=0,...,k-1$, $(A^{(i)}_\alpha)_{\alpha>0}, A^{(i)}_\infty\in C^{i,\theta}$ and $\lim_{\alpha\to \infty}A_\alpha^{(i)}=A_\infty^{(i)}$ in $C^{i,\theta}$. \par
\noindent$\bullet$  $(P_\alpha)_{\alpha}$ is uniformly coercive in the sense that there exists $c>0$ such that
$$\int_M u P_\alpha u\, dv_g=\int_M (\Delta_g^{\frac{k}{2}}u)^2\, dv_g+\sum_{i=0}^{k-1}\int_M A^{(i)}_\alpha(\nabla^iu,\nabla^i u)\, dv_g\geq c\Vert u\Vert_{H_k^2}^2$$
for all $\alpha>0$ and $u\in H_k^2(M)$, where $dv_g$ is the Riemannian element of volume and $H_k^2(M)$ is the completion of $C^\infty(M)$ for the norm $u\mapsto\Vert u\Vert_{H_k^2}:= \sum_{i=0}^{k}\Vert\nabla^iu\Vert_2$.
When $k=2l+1$ is odd, we have set $(\Delta_g^{\frac{k}{2}}u)^2=|\nabla \Delta_g^l u|^2_g$. 
\end{defi}

For any metric $g$ on a manifold $X$ and $p\in X$, we let $\hbox{exp}_p^g$ be the exponential map at $p$ with respect to the metric $g$. By assimilating isometrically the tangent space $T_pX$ to $\rn$, we will consider $\hbox{exp}_p^g:\rn\to M$. Note that the isometry depends on $p$, and we will consider a smooth family of isometries in order to have smoothness of the exponential with respect to a neighborhood of each point $p$. We will simply write $\hbox{exp}_p$ when there is no ambiguity.  In all this manuscript, $i_g(X)$ will denote the injectivity radius of $(X,g)$: note that it is positive when $X$ is compact.

\smallskip\noindent Since the exponential map is a diffeomorphism before the cut-locus, then for any $p_0\in M$, there exists a neighborhood $\Omega\subset M$ of $p_0$ and $r>0$ such that
\begin{equation}\label{lem:lip}
\frac{1}{2}|X-Y|\leq d_g(\hbox{exp}_p(X),\hbox{exp}_p(Y))\leq 2|X-Y|
\end{equation}
for all $p\in\Omega$ and $X,Y\in B_{r}(0)\subset \rn$.

\smallskip\noindent  In the sequel, $C(a,b,...)$ will denote a constant depending on $(M,g)$, $a,b,...$. The value can change from one line to another, and even in the same line. Given two sequence $(a_\alpha)_\alpha$ and $(b_\alpha)_\alpha$, we will write $a_\alpha\asymp b_\alpha$ if there existe $c_1,c_2>0$ such that $c_1 a_\alpha\leq b_\alpha\leq c_2 a_\alpha$ for all $\alpha>0$.  Even when it is not mentioned, all results in this paper are up to the extraction of sub-family $\alpha\to +\infty$.

\smallskip\noindent In all this manuscript, by "Elliptic regularity theory", we will refer to the general article \cite{ADN} by Agmon-Douglis-Nirenberg. For the convenience of the readers, we will also refer to the precise statements in \cite{robert:gjms} that are extracted from \cite{ADN}.

\section{Exhaustion of the concentration points}
We let $(P_\alpha)_{\alpha>0}\to P_\infty$ be of type (SCC) and a family $(\ua)_{\alpha>0}\in C^{2k}(M)$ be such that
\begin{equation}\label{eq:ua:pf}
P_\alpha u_\alpha=|u_\alpha|^{\crit-2}u_\alpha\hbox{ in }M,
\end{equation}
\begin{equation}\label{hyp:unbnd}
\lim_{\alpha\to +\infty}\max_M|\ua|=+\infty.
\end{equation}
and \begin{equation}\label{hyp:bnd:nrj:pf}
\Vert\ua\Vert_{\crit} \leq \Lambda \hbox{ for all }\alpha>0.
\end{equation} 
Sobolev's embedding theorem for compact manifolds yields that $H_k^2(M)\hookrightarrow L^{\crit}(M)$ continuously, so that there exists $C_S(k)>0$ such that 
\begin{equation}\label{sobo:ineq:M}
\Vert u\Vert_{\crit}\leq C_S(k)\Vert u\Vert_{H_k^2}\hbox{ for all }u\in H_k^2(M).
\end{equation}
Multiplying \eqref{eq:ua:pf} by $\ua$, integrating by parts and using the uniform coercivity of $(P_\alpha)$ and the Sobolev inequality above, we get a constant $C(\Lambda)>0$ independent of $\alpha$  such that
\begin{equation}\label{hyp:bnd:nrj:pf:bis}
\Vert \ua\Vert_{H_k^2(M)}\leq C(\Lambda)\hbox{ for all }\alpha>0.
\end{equation}
The objective here is to saturate the number of concentration points: this is the object of Theorem \ref{th:4.1} at the end of the section. This section is devoted to setting up some notations and proving the theorem through intermediate steps. 
\begin{lemma}\label{lem:1} Let $(\za)_\alpha\in M$ be such that $\lim_{\alpha\to \infty}|\ua(\za)|=+\infty$. We set $\na:=|\ua(\za)|^{-\frac{2}{n-2k}}$ for all $\alpha>0$ and we fix $\Omega\subset\rn$. Assume that for all $\omega\subset\subset \Omega$, there exists $C(\omega)>0$ such that
\begin{equation}\label{assump:01}
|\ua(\hbox{exp}_{\za}(\na x))|\leq C(\omega)|\ua(\za)|\hbox{ for all }x\in\omega\hbox{ and }\alpha>0.
\end{equation}
Then there exists $V\in C^{2k}(\Omega)$ such that $\Delta_\xi^k V=|V|^{\crit-2}V$ in $\Omega$ and
\begin{equation*}
\lim_{\alpha\to \infty}\na^{\frac{n-2k}{2}}\ua(\hbox{exp}_{\za}(\na x))=V(x)\hbox{ for all }x\in \Omega,
\end{equation*}
and this convergence holds in $C^{2k}_{loc}(\Omega)$. Moreover, $|\nabla^iV|\in L^{\frac{2n}{n-2(k-i)}}(\Omega)$ for all $i=0,...,k$.
\end{lemma}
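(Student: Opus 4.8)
The strategy is a rescaling-and-elliptic-regularity argument. Set $\tilde u_\alpha(x) := \na^{\frac{n-2k}{2}} \ua(\mathrm{exp}_{\za}(\na x))$ for $x$ in a fixed large ball $B_R(0)$ inside $\Omega$. By construction $|\tilde u_\alpha(0)| = 1$, and by assumption \eqref{assump:01} we have $|\tilde u_\alpha| \leq C(\omega)$ uniformly on each $\omega \subset\subset \Omega$. The function $\tilde u_\alpha$ satisfies a rescaled version of \eqref{eq:ua:pf}: writing $g_\alpha := (\mathrm{exp}_{\za}^\ast g)(\na \cdot)$ for the pulled-back and rescaled metric on $B_R(0)$, one computes that $\tilde P_\alpha \tilde u_\alpha = |\tilde u_\alpha|^{\crit - 2}\tilde u_\alpha$ where $\tilde P_\alpha$ is the operator associated to \eqref{model:P} with metric $g_\alpha$ and rescaled coefficients $\na^{2(k-i)} A^{(i)}_\alpha(\mathrm{exp}_{\za}(\na\cdot))$. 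Since $\na \to 0$ and the $A^{(i)}_\alpha$ are bounded in $C^{i,\theta}$ (type (SCC)), the lower-order coefficients of $\tilde P_\alpha$ tend to $0$ in $C^{i,\theta}_{loc}$; and since the exponential map at $\za$ maps $0$ to a normal chart, $g_\alpha \to \xi$ in $C^m_{loc}(\rn)$ for every $m$. Thus $\tilde P_\alpha \to \Delta_\xi^k$ with controlled (uniformly elliptic, bounded-coefficient) behavior on each fixed ball.

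With this in hand I would bootstrap. The right-hand side $|\tilde u_\alpha|^{\crit-2}\tilde u_\alpha$ is bounded in $L^\infty_{loc}(\Omega)$ by \eqref{assump:01}, hence in $L^p_{loc}$ for every $p<\infty$. Applying $L^p$ elliptic regularity for the order-$2k$ operator $\tilde P_\alpha$ (uniformly in $\alpha$, using the uniform ellipticity and the $C^{i,\theta}$-bounds on coefficients, exactly the Agmon–Douglis–Nirenberg estimates quoted in the paper) gives a uniform bound on $\tilde u_\alpha$ in $W^{2k,p}_{loc}(\Omega)$ for all $p$, hence in $C^{2k-1,\beta}_{loc}$ by Sobolev embedding. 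Then the right-hand side is bounded in $C^{0,\beta}_{loc}$, so Schauder estimates upgrade this to a uniform bound in $C^{2k,\beta}_{loc}(\Omega)$. By Arzelà–Ascoli and a diagonal extraction, $\tilde u_\alpha \to V$ in $C^{2k}_{loc}(\Omega)$ along a subsequence, for some $V \in C^{2k}(\Omega)$. Passing to the limit in the equation and using $\tilde P_\alpha \to \Delta_\xi^k$ gives $\Delta_\xi^k V = |V|^{\crit-2}V$ in $\Omega$. (Since the paper works up to subsequences throughout, one need not fuss about uniqueness of the limit.)

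For the integrability claim $|\nabla^i V| \in L^{\frac{2n}{n-2(k-i)}}(\Omega)$, I would use the energy bound \eqref{hyp:bnd:nrj:pf:bis}. A change of variables shows that for each fixed $\omega \subset\subset \Omega$, $\int_\omega |\nabla^i_{g_\alpha} \tilde u_\alpha|^{\frac{2n}{n-2(k-i)}}\, dv_{g_\alpha} = \int_{\mathrm{exp}_{\za}(\na\omega)} |\nabla^i \ua|^{\frac{2n}{n-2(k-i)}}\, dv_g$, and this is a scale-invariant quantity dominated by $\Vert \ua\Vert_{H_k^2}^{\frac{2n}{n-2(k-i)}}$ via the Gagliardo–Nirenberg–Sobolev inequalities (the exponent $\frac{2n}{n-2(k-i)}$ is precisely the critical one for $\nabla^i$ of an $H^2_k$ function), hence bounded uniformly in $\alpha$ by \eqref{hyp:bnd:nrj:pf:bis}. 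Passing to the limit with Fatou's lemma over an exhaustion $\omega \nearrow \Omega$ yields $\nabla^i V \in L^{\frac{2n}{n-2(k-i)}}(\Omega)$ with norm bounded by $C(\Lambda)$.

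The main obstacle is the first paragraph: one must check carefully that the rescaled operators $\tilde P_\alpha$ are uniformly elliptic of order $2k$ with coefficients uniformly bounded in the relevant Hölder norms on each fixed ball, so that the ADN $L^p$ and Schauder estimates can be applied with constants independent of $\alpha$. This is where the $C^{i,\theta}$-regularity in the definition of (SCC) and the fact that $\na \to 0$ (which kills, rather than blows up, the lower-order terms after rescaling) are essential; the higher-order setting $k \geq 2$ makes this bookkeeping heavier than in the second-order case but introduces no genuinely new difficulty once the coefficient scalings $\na^{2(k-i)}$ are tracked.
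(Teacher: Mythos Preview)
Your proposal is correct and follows essentially the same route as the paper's proof: rescale to $\tilde u_\alpha$, observe the rescaled equation has principal part $\Delta_{g_\alpha}^k$ with $g_\alpha\to\xi$ and lower-order coefficients killed by powers of $\na$, invoke ADN interior estimates to get uniform $C^{2k}$-bounds and extract a $C^{2k}_{loc}$-limit $V$ solving $\Delta_\xi^k V=|V|^{\crit-2}V$, then use the Sobolev embedding $H_k^2(M)\hookrightarrow H_i^{2n/(n-2(k-i))}(M)$ together with the scale-invariant change of variables and \eqref{hyp:bnd:nrj:pf:bis} to obtain the integrability of $|\nabla^i V|$. The paper compresses your bootstrap into a single citation of elliptic regularity (Theorems D.2--D.3 of \cite{robert:gjms}) and passes to the limit in the integral directly via the $C^{2k}_{loc}$-convergence rather than Fatou, but these are cosmetic differences.
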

\begin{proof} Let us set
$$\hat{u}_\alpha(x):=\na^{\frac{n-2k}{2}}\ua(\hbox{exp}_{z_\alpha}(\na x))\hbox{ for all }x\in \Omega.$$
Equation \eqref{eq:ua:pf} rewrites
\begin{equation}\label{eq:ua:prop:hat}
\Delta_{\hat{g}_\alpha}^k\hat{u}_\alpha+\sum_{j=0}^{2k-2}\na^{2k-j}\hat{B}_\alpha^j\star\nabla^j\hat{u}_\alpha=|\hat{u}_\alpha|^{\crit-2 }\hat{u}_\alpha\hbox{ in }\Omega
\end{equation}
where $\hat{g}_\alpha:=(\hbox{exp}_{z_\alpha}g)(\na\cdot)$  and  $\hbox{exp}_{p }^\star g$ is the pull-back metric of $g$ and for all $j=0,...,2k-2$, $(\hat{B}_\alpha^j)_\alpha$ is a family of $(j,0)-$tensors such that there exists $C_j>0$ such that $\Vert \hat{B}_\alpha^j\Vert_{C^{0,\theta}}\leq C_j$ for all $\alpha>0$. Let us fix $\omega\subset\subset \Omega$. It follows from the assumption \eqref{assump:01} that $|\hat{u}_\alpha(x)|\leq C(\omega)$ for all $x\in\omega$ and $\alpha>0$. It then follows from elliptic regularity (\cite{ADN} or Theorems D.2 and D.3 of \cite{robert:gjms}) that there exists $V\in C^{2k}(\Omega)$ such that $\lim_{\alpha\to \infty}\hat{u}_\alpha=V$ in $C^{2k}_{loc}(\Omega)$. Note that since $\hbox{exp}_{z_\alpha}$ is a normal chart at $z_\alpha$, and therefore, we get that $\lim_{\alpha\to\infty}\hat{g}_\alpha=\xi$ in $C^{2k}_{loc}(\rn)$. Passing to the limit $\alpha\to +\infty$ in \eqref{eq:ua:prop:hat} yields $\Delta^k_\xi V=|V|^{\crit-2}V$ in $\Omega$. We fix $i\in \{0,...,2k\}$. It follows from Sobolev's embedding theorem that $H_{k}^2(M)\hookrightarrow H_i^{\frac{2n}{n-2(k-i)}}(M)$ continuously, so there exists $C_i(M)>0$ such that  
$$\left(\int_M |\nabla^i_g \ua|_g^{\frac{2n}{n-2(k-i)}}\, dv_g\right)^{\frac{n-2(k-i)}{2n}}\leq C_i(M)\Vert \ua\Vert_{H_k^2}\hbox{ for all }\alpha>0.$$
With a change of variable, we get that 
$$\int_{\hbox{exp}_{\za}(\na \omega)}|\nabla^i_g\ua|_g^{\frac{2n}{n-2(k-i)}}\, dv_g= \int_{\omega}|\nabla^i_{\hat{g}_\alpha}\hat{u}_\alpha|_{\hat{g}_\alpha}^{\frac{2n}{n-2(k-i)}}\, dv_{\hat{g}_\alpha}= \int_{\omega}|\nabla^i V|_\xi^{\frac{2n}{n-2(k-i)}}\, dv_\xi+o(1).$$
These two identities and  \eqref{hyp:bnd:nrj:pf:bis} then yield $\int_{\omega}|\nabla^i V|_\xi^{\frac{2n}{n-2(k-i)}}\, dv_\xi\leq (C_i(M)C(\Lambda))^{\frac{2n}{n-2(k-i)}}$
for all $\omega\subset\subset \Omega$. Therefore $|\nabla^i V|\in L^{\frac{2n}{n-2(k-i)}}(\Omega)$.\end{proof}
The Euclidean Sobolev embedding yields $K(n,k)>0$ such that
\begin{equation*}\label{sobo:ineq:rn}
\left(\int_{\rn}|u|^{\crit}\, dx\right)^\frac{2}{\crit}\leq K(n,k)\int_{\rn}(\Delta_\xi^{\frac{k}{2}}u)^2\, dx\hbox{ for all }u\in D_k^2(\rn).
\end{equation*}
We then get that
\begin{lemma}\label{lem:2} Given a finite set $S\subset \rn$, possibly empty, let $V\in C^{2k}(\rn-S)$ be such that $\Delta_\xi^kV=|V|^{\crit-2}V$ in $\rn-S$ and  $|\nabla^iV|\in L^{\frac{2n}{n-2(k-i)}}(\rn-S)$ for all $i=0,...,k$. Then $V$ extends continuously to $\rn$ and $V\in D_k^2(\rn)\cap C^{2k}(\rn)$ satisfies $\Delta_\xi^kV=|V|^{\crit-2}V$ in the entire space $\rn$. Moreover, if $V\not\equiv 0$, then
\begin{equation}\label{min:nrj:U}
\int_{\rn}|V|^{\crit}\, dv_\xi\geq \frac{1}{K(n,k)^{\frac{n}{2k}}}.
\end{equation}
\end{lemma}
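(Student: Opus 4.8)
The plan is to show that the finite set $S$ is a removable singularity (so that $V$ lands in $D_k^2(\rn)$ and solves the equation weakly on all of $\rn$), then to bootstrap by elliptic regularity to a classical solution on $\rn$, and finally to derive \eqref{min:nrj:U} by testing the equation against $V$ itself and invoking the Euclidean Sobolev inequality. For the removability step, since $2\le 2k<n$ each point of $\rn$ has zero $H_k^2$-capacity, and one builds explicit cutoffs: for small $\varepsilon>0$ let $\chi_\varepsilon\in C^\infty_c(\rn)$ be equal to $1$ on an $\varepsilon$-neighbourhood of $S$, supported in a $\sqrt\varepsilon$-neighbourhood of $S$, obtained by composing a fixed profile with $\log d(\cdot,S)$; an explicit logarithmic interpolation makes $\Vert\nabla^j\chi_\varepsilon\Vert_{L^{n/j}(\rn)}^{n/j}\lesssim|\log\varepsilon|^{1-n/j}\to0$ for $j=1,\dots,k$ (here $j\le k<n/2<n$, so the exponent is negative). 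Writing $\nabla^k(\chi_\varepsilon V)$ by the Leibniz rule and applying Hölder's inequality with the conjugate exponents $\frac nj$ and $\frac{2n}{n-2j}$ to each term, using the hypothesis $|\nabla^{k-j}V|\in L^{\frac{2n}{n-2j}}(\rn-S)$ for $j\ge1$ and dominated convergence for the term $\chi_\varepsilon\nabla^kV$ (recall $\nabla^kV\in L^2$), one gets $\Vert\nabla^i(\chi_\varepsilon V)\Vert_2\to0$ for $i=0,\dots,k$; cutting off at infinity in the same fashion shows $(1-\chi_\varepsilon)V\in D_k^2(\rn)$, and therefore $V=\lim_{\varepsilon\to0}(1-\chi_\varepsilon)V\in D_k^2(\rn)$. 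To pass the equation across $S$: for $\varphi\in C^\infty_c(\rn)$, test the (classical) equation on $\rn-S$ against $(1-\chi_\varepsilon)\varphi\in C^\infty_c(\rn-S)$, i.e. $\int_{\rn}\Delta_\xi^{k/2}V\,\Delta_\xi^{k/2}\big((1-\chi_\varepsilon)\varphi\big)=\int_{\rn}|V|^{\crit-2}V(1-\chi_\varepsilon)\varphi$ (with the odd-$k$ convention of the statement), let $\varepsilon\to0$: the right-hand side converges to $\int_{\rn}|V|^{\crit-2}V\varphi$ by dominated convergence, while the correction $\int_{\rn}\Delta_\xi^{k/2}V\,\Delta_\xi^{k/2}(\chi_\varepsilon\varphi)$ is bounded by $\Vert\Delta_\xi^{k/2}V\Vert_2\,\Vert\chi_\varepsilon\varphi\Vert_{D_k^2}\to0$ by the same cutoff estimates; hence $\Delta_\xi^kV=|V|^{\crit-2}V$ weakly in $\rn$.

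For the regularity and the energy bound, from $V\in L^{\crit}(\rn)$ we get $|V|^{\crit-2}V\in L^{\crit/(\crit-1)}_{loc}$, and a Brezis--Kato-type iteration (exploiting that $\Vert V\Vert_{\crit}$ is small on small balls, so the finitely many points of $S$ are absorbed together with the already-known $C^{2k}$-regularity on $\rn-S$) followed by Schauder estimates (using that $t\mapsto|t|^{\crit-2}t$ is $C^1$) yields $V\in C^{2k}(\rn)$ solving the equation classically everywhere; since $V$ and its $D_k^2$-extension agree a.e. on $\rn-S$ and both are continuous there, this provides the continuous (in fact $C^{2k}$) extension. Finally, as $V\in D_k^2(\rn)$ and $V\not\equiv0$, we have $E:=\int_{\rn}(\Delta_\xi^{k/2}V)^2\,dv_\xi>0$; using $V$ itself as a test function (legitimate in $D_k^2$) gives $E=\int_{\rn}|V|^{\crit}\,dv_\xi=:T$, and the Euclidean Sobolev inequality gives $T^{2/\crit}\le K(n,k)\,E=K(n,k)\,T$, i.e. $T^{2/\crit-1}=T^{-2k/n}\le K(n,k)$, which rearranges to \eqref{min:nrj:U}.

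I expect the only genuine difficulty to be the removability step: producing cutoffs with the scale-invariant smallness $\Vert\nabla^j\chi_\varepsilon\Vert_{L^{n/j}}\to0$ simultaneously for all $j\le k$, and combining them with the borderline integrability $|\nabla^iV|\in L^{2n/(n-2(k-i))}$ in order to push both the membership $V\in D_k^2(\rn)$ and the weak equation across the singular set $S$. Everything after that is routine elliptic regularity for $\Delta_\xi^k$ together with the sharp Euclidean Sobolev inequality.
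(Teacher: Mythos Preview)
Your proof is correct and follows the same outline as the paper: cut off near $S$ and at infinity to place $V$ in $D_k^2(\rn)$, pass the equation weakly across $S$, invoke elliptic regularity (the paper cites \cite{vdv,mazumdar:jde} rather than spelling out the Brezis--Kato iteration), and conclude \eqref{min:nrj:U} from the Sobolev inequality exactly as you do. The only difference is cosmetic: the paper uses the standard dyadic cutoff $\eta(|x-p_j|/\eps)$---whose $j$-th gradients have merely \emph{bounded} $L^{n/j}$-norm, with smallness coming from the absolute continuity of $\int|\nabla^{k-j}V|^{2n/(n-2j)}$ on the shrinking annuli---rather than your logarithmic cutoff that drives the $L^{n/j}$-norms themselves to zero; either choice works.
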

\begin{proof} Let us write $S=\{p_1,..,p_N\}$ where $p_1,...,p_N\in\rn$ are distinct and let $r_0, R_0>0$ be such that $|p_i-p_j|>r_0$ for all $i\neq j$ and $|p_i|<R_0$ for all $i$. Let $\eta\in C^\infty(\rr)$ be such that $\eta(t)=0$ for $t\leq 1$ and $\eta(t)=1$ for $t\geq 2$. For $0<\eps<r_0/6$ and $R>2R_0+r_0$, we define
$$\eta_{\eps,R}(x):=\left\{\begin{array}{cc}
\eta\left(\frac{|x-p_j|}{\eps}\right)&\hbox{ if }x\in B_{3\eps}(p_j)\hbox{ for some }j=1,...,N\\
1-\eta\left(\frac{|x|}{R}\right)&\hbox{ if }x\in \rn -  B_{R/2}(0)\\
1&\hbox{ otherwise.}
\end{array}\right.$$
As one checks,  $\eta_{\eps,R}V\in C^\infty_c(\rn)$ and $(\eta_{\eps,R}V)_{\eps,R}$ is a Cauchy sequence in $D_k^2(\rn)$, and taking the pointwise limit, we then get that $V\in D_k^2(\rn)$. We then get that $V$ is a weak solution of $\Delta_\xi^kV=|V|^{\crit-2}V$ in $D_k^2(\rn)$. Then $V\in C^{2k}(\rn)$ is a classical solution, see Van der Vorst \cite{vdv} or Mazumdar \cite{mazumdar:jde} for a version in the Riemannian setting. It follows form \eqref{sobo:ineq:rn} that
\begin{equation*}
\frac{1}{K(n,k)}\leq \frac{\int_{\rn}(\Delta_\xi^{k/2}V)^2\, dv_\xi}{\left(\int_{\rn}|V|^{\crit}\, dv_\xi\right)^{\frac{2}{\crit}}}=\frac{\int_{\rn}|V|^{\crit}\, dv_\xi}{\left(\int_{\rn}|V|^{\crit}\, dv_\xi\right)^{\frac{2}{\crit}}}=\left(\int_{\rn}|V|^{\crit}\, dv_\xi\right)^{\frac{2k}{n}}\end{equation*}
and then \eqref{min:nrj:U} holds.
\end{proof}
\noindent{\bf Step 1: the first concentration point.} 
For all $\alpha>0$, we let $\xaun\in M$ be such that $|\ua(\xaun)|=\max_M |\ua|$ and we set $\maun:=|\ua(\xaun)|^{-\frac{2}{n-2k}}$. It follows from \eqref{hyp:unbnd} that $\lim_{\alpha\to +\infty}\maun=0$. We apply Lemmae \ref{lem:1} and \ref{lem:2} with $\Omega=\rn$, $C(\omega)=1$ for all $\omega\subset\subset\rn$, $\za:=\xaun$ and $S=\emptyset$. We then get $U_1\in D_k^2(\rn)\cap C^{2k}(\rn)$ such that $\Delta^k_\xi U_1=|U_1|^{\crit-2}U_1$ weakly in $D_k^2(\rn)$, strongly in $C^{2k}$ and 
\begin{equation*}
\lim_{\alpha\to +\infty}\tuaun=U_1\hbox{ in }C^{2k}_{loc}(\rn)
\end{equation*}
where $\tuaun(x):=\maun^{\frac{n-2k}{2}}\ua(\hbox{exp}_{\xaun}(\maun x))$ for all $x\in B_{i_g(M)/\maun}(0)\subset\rn$. Since $|\tuaun(0)|=1$, we get that $|U_1(0)|=1$, and therefore $U_1\not\equiv 0$ so that \eqref{min:nrj:U} holds with $V:\equiv U_1$. With a change of variable, we then get that
$$\int_{B_{R\maun}(\xaun)}|\ua|^{\crit}\, dv_g=\int_{B_R(0)}|\tuaun|^{\crit}\, dv_{\tilde{g}_{\alpha,1}}$$
where $\tilde{g}_{\alpha,1}:=\hbox{exp}_{\xaun}^\star g(\maun\cdot)$. Therefore, since $\hbox{exp}_{\xaun}$ is a normal chart at $\xaun$, we get that $\lim_{\alpha\to\infty}\tilde{g}_{\alpha,1}=\xi$ in $C^{2k}_{loc}(\rn)$. Then, using \eqref{min:nrj:U} we get that
\begin{equation}\label{ineq:21}
\lim_{R\to +\infty}\lim_{\alpha\to +\infty}\int_{B_{R\maun}(\xaun)}|\ua|^{\crit}\, dv_g=\int_{\rn}|U_1|^{\crit}\, dv_\xi\geq \frac{1}{K(n,k)^{\frac{n}{2k}}}.
\end{equation}
{\bf Step 2: A first family of concentration points.} 
\begin{defi} Given $K\geq 1$, we say that $(H_{K})$ holds if for all $i=1,..,K$ there exists $(\xai)_\alpha\in M$ and there exists $U_i\in D_k^2(\rn)\cap C^{2k}(\rn)-\{0\}$ such that:
\begin{itemize}
\item $\mai:=|\ua(\xai)|^{\frac{-2}{n-2k}}\to 0$ as  $\alpha\to +\infty$;
\item For all $x\in\rn$,
\begin{equation}\label{cv:tuai}
\tuai :=\mai^{\frac{n-2k}{2}}\ua(\hbox{exp}_{\xai}(\mai \cdot))\to U_i\hbox{ in }C^{2k}_{loc}(\rn)
\end{equation}
\item We have that $\Delta_\xi^k U_i=|U_i|^{\crit-2}U_i$ strongly in $\rn$ and weakly in $D_k^2(\rn)$ and
\begin{equation*}
\int_{\rn}|U_i|^{\crit}\, dv_\xi\geq \frac{1}{K(n,k)^{\frac{n}{2k}}}.
\end{equation*}
\item For all $i,j\in \{1,...,K\}$, $i\neq j$, we have that
\begin{equation}\label{lim:d:infty}
\lim_{\alpha\to +\infty}\frac{d(\xai,\xaj)}{\mai}=+\infty.
\end{equation}
\end{itemize}
\end{defi}
It follows from Step 1 that $(H_1)$ holds. 
\begin{proposition}\label{prop:rec:1} Assume that $(H_K)$ holds for some $K\geq 1$. Assume that
\begin{equation}\label{hyp:weak:1}
\limsup_{\alpha\to +\infty}\sup_{x\in M}R_{\alpha, K}(x)^{\frac{n-2k}{2}}|\ua(x)|=+\infty,
\end{equation}
where $R_{\alpha, K}(x):=\min_{i=1,...,K}d_g(x,\xai)$. Then $(H_{K+1})$ holds.
\end{proposition}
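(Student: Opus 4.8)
The plan is to carry out the classical ``extraction of one more concentration point'', with Lemmae \ref{lem:1} and \ref{lem:2} doing the heavy lifting of high-order elliptic regularity and energy quantization. First I would set $w_\alpha(x):=R_{\alpha,K}(x)^{\frac{n-2k}{2}}|\ua(x)|$; since $R_{\alpha,K}$ is continuous (a minimum of distance functions) and $M$ is compact, $w_\alpha$ attains its maximum at some $\ya\in M$, and by \eqref{hyp:weak:1} and the standing convention of passing to a subfamily I may assume $w_\alpha(\ya)=\max_M w_\alpha\to+\infty$. Put $\na:=|\ua(\ya)|^{-\frac{2}{n-2k}}$. As $R_{\alpha,K}$ is bounded on $M$, $w_\alpha(\ya)\to+\infty$ forces $|\ua(\ya)|\to+\infty$, so $\na\to 0$; and from $w_\alpha(\ya)=\big(R_{\alpha,K}(\ya)/\na\big)^{\frac{n-2k}{2}}$ I obtain the key separation
\begin{equation}\label{pf:prop:rec:1}
\frac{R_{\alpha,K}(\ya)}{\na}=\frac{1}{\na}\min_{i=1,...,K}d_g(\ya,\xai)\longrightarrow+\infty\qquad(\alpha\to+\infty),
\end{equation}
which in particular gives $d_g(\ya,\xai)/\na\to+\infty$ for each $i=1,...,K$.

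Next I would show that $\hat v_\alpha(x):=\na^{\frac{n-2k}{2}}\ua(\mathrm{exp}_{\ya}(\na x))$ is bounded on compact subsets of $\rn$, so that Lemma \ref{lem:1} applies with $\za=\ya$ and $\Omega=\rn$. Fixing $\omega\subset\subset\rn$, for $x\in\omega$ and $z:=\mathrm{exp}_{\ya}(\na x)$ one has $d_g(z,\ya)\le 2\na|x|$ (a uniform consequence of \eqref{lem:lip}), hence by \eqref{pf:prop:rec:1}, for $\alpha$ large depending only on $\omega$, $d_g(z,\xai)\ge\frac12 d_g(\ya,\xai)$ for all $i$ and therefore $R_{\alpha,K}(z)\ge\frac12 R_{\alpha,K}(\ya)$; the maximality $w_\alpha(z)\le w_\alpha(\ya)$ then yields $|\ua(z)|\le 2^{\frac{n-2k}{2}}|\ua(\ya)|$, which is exactly \eqref{assump:01} with the uniform constant $C(\omega)=2^{\frac{n-2k}{2}}$. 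Lemma \ref{lem:1} then provides $V\in C^{2k}(\rn)$ with $\Delta_\xi^kV=|V|^{\crit-2}V$, $\hat v_\alpha\to V$ in $C^{2k}_{loc}(\rn)$, and $|\nabla^iV|\in L^{\frac{2n}{n-2(k-i)}}(\rn)$ for $i=0,...,k$, and Lemma \ref{lem:2} upgrades this to $V\in D_k^2(\rn)\cap C^{2k}(\rn)$. Since $|\hat v_\alpha(0)|=\na^{\frac{n-2k}{2}}|\ua(\ya)|=1$, passing to the limit gives $|V(0)|=1$, so $V\not\equiv 0$ and \eqref{min:nrj:U} supplies the energy lower bound $\int_{\rn}|V|^{\crit}\,dv_\xi\ge K(n,k)^{-n/(2k)}$. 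I would then set $x_{\alpha,K+1}:=\ya$, $\mu_{\alpha,K+1}:=\na$, $U_{K+1}:=V$; the scale, the convergence \eqref{cv:tuai}, the limit equation and the energy requirements of $(H_{K+1})$ are now in place, and the separation \eqref{lim:d:infty} among the indices $1,...,K$ is inherited from $(H_K)$.

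What remains, and is the only genuinely delicate point, is the separation \eqref{lim:d:infty} for the new pairs. The inequality $d_g(x_{\alpha,K+1},\xaj)/\mu_{\alpha,K+1}\to+\infty$ is contained in \eqref{pf:prop:rec:1}; for $d_g(\xaj,x_{\alpha,K+1})/\maj\to+\infty$ I would argue by contradiction. If it failed for some $j$, then after extraction $\xi_\alpha:=\maj^{-1}\mathrm{exp}_{\xaj}^{-1}(\ya)$ would stay in a fixed ball and converge to some $\xi_0\in\rn$; from $|\ua(\ya)|=\maj^{-\frac{n-2k}{2}}|\tilde u_{\alpha,j}(\xi_\alpha)|$ and \eqref{cv:tuai} I get $\na=\maj\,|\tilde u_{\alpha,j}(\xi_\alpha)|^{-\frac{2}{n-2k}}$ with $\tilde u_{\alpha,j}(\xi_\alpha)\to U_j(\xi_0)$. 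If $U_j(\xi_0)\neq 0$ then $\na\asymp\maj$ while $d_g(\ya,\xaj)=O(\maj)$, so $d_g(\ya,\xaj)/\na$ is bounded; if $U_j(\xi_0)=0$ then $\maj=o(\na)$ and again $d_g(\ya,\xaj)=O(\maj)=o(\na)$. In both cases this contradicts \eqref{pf:prop:rec:1} (note $R_{\alpha,K}(\ya)\le d_g(\ya,\xaj)$), so $d_g(\xaj,x_{\alpha,K+1})/\maj\to+\infty$ for all $j=1,...,K$, completing the verification of $(H_{K+1})$. The expected main obstacle is precisely this last step — excluding that the newly found bubble sits at bounded rescaled distance from a previously found one — while the rest is a routine high-order transcription of the second-order scheme.
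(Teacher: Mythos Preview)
Your proof is correct and follows the same strategy as the paper: maximize $w_\alpha=R_{\alpha,K}^{(n-2k)/2}|\ua|$, use the maximality to derive the local bound \eqref{assump:01} with constant $2^{\frac{n-2k}{2}}$, and then invoke Lemmae \ref{lem:1} and \ref{lem:2}. The only cosmetic difference is in the final separation step: the paper notes directly that $d_g(x_{\alpha,K+1},\xai)=O(\mai)$ combined with \eqref{cv:tuai} would give $w_\alpha(x_{\alpha,K+1})=O(1)$, contradicting $w_\alpha(x_{\alpha,K+1})\to+\infty$, which is a one-line version of your two-case argument.
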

\begin{proof} We keep the same notations as in the definition of $(H_K)$. We set
$$w_\alpha(x):=R_{\alpha, K}(x)^{\frac{n-2k}{2}}|\ua(x)|\hbox{ for all }x\in M\hbox{ and }\alpha>0.$$
It follows from \eqref{hyp:weak:1} that, up to extraction, $\lim_{\alpha\to +\infty }\sup_Mw_\alpha=+\infty$. We let $(x_{\alpha, K+1})_\alpha\in M$ be such that $w_\alpha(x_{\alpha, K+1})=\sup_Mw_\alpha\to +\infty$ as $\alpha\to +\infty$. We then get that $R_{\alpha, K}(x_{\alpha, K+1})^{\frac{n-2k}{2}}|\ua(x_{\alpha, K+1})|\to +\infty$. Setting $\mu_{\alpha, K+1}:=|\ua(x_{\alpha, K+1})|^{-\frac{2}{n-2k}}$, we get that
\begin{equation}\label{lim:N+1:1}
\lim_{\alpha\to +\infty} \mu_{\alpha, K+1}=0\hbox{ and }\lim_{\alpha\to +\infty}\frac{d_g(x_{\alpha, K+1},\xai)}{\mu_{\alpha, K+1}}=+\infty\hbox{ for all }i=1,...,K.
\end{equation}
Assume that there exists $i\in \{1,..,K\}$ such that $d_g(x_{\alpha, K+1},\xai)=O(\mai)$ as $\alpha\to +\infty$. It then follows from \eqref{cv:tuai} that $w_\alpha(x_{\alpha, K+1})=O(1)$ as $\alpha\to +\infty$, contradicting the definition of $x_{\alpha, K+1}$. Therefore,  for all $i=1,...,K$, we have that
\begin{equation*}
\lim_{\alpha\to +\infty}\frac{d_g(x_{\alpha, K+1},\xai)}{\mai}=+\infty.
\end{equation*}

\medskip\noindent We claim that for all $R>0$, for $\alpha>0$ large enough, we have that
\begin{equation}\label{ineq:28}
|\ua(\hbox{exp}_{x_{\alpha, K+1}}(\mu_{\alpha, K+1} x))|\leq 2^{\frac{n-2k}{2}} |\ua(x_{\alpha, K+1})|\hbox{ for all }x\in B_R(0)\subset \rn.
\end{equation}
We prove the claim. For $x\in B_R(0)$ and $i=1,...,K$, we have that
\begin{eqnarray*}
d_g(\xai, \hbox{exp}_{x_{\alpha, K+1}}(\mu_{\alpha, K+1} x))&\geq& d_g(\xai,  x_{\alpha, K+1})-\mu_{\alpha, K+1} |x|\\
&\geq& d_g(\xai,  x_{\alpha, K+1})\left(1-\frac{\mu_{\alpha, K+1}}{d_g(\xai,  x_{\alpha, K+1})}R\right)\\
&\geq & R_{\alpha, K}(x_{\alpha, K+1})\left(1-\frac{\mu_{\alpha, K+1}}{d_g(\xai,  x_{\alpha, K+1})}R\right)
\end{eqnarray*}
It then follows from \eqref{lim:N+1:1} that for $\alpha>0$ large enough, 
$R_{\alpha,K} (\hbox{exp}_{x_{\alpha, K+1}}(\mu_{\alpha, K+1} x))\geq \frac{1}{2} R_{\alpha, K}(x_{\alpha, K+1})$. The definition of $w_\alpha$ then yields  \eqref{ineq:28}.

\smallskip\noindent We define $\tilde{u}_{\alpha,K+1}(x):=\mu_{\alpha, K+1}^{\frac{n-2k}{2}}\ua(\hbox{exp}_{x_{\alpha, K+1}}(\mu_{\alpha, K+1} x))$ for all $x\in B_{\mu_{\alpha, K+1}^{-1}i_g(M)}(0)$. It follows from \eqref{ineq:28}, \eqref{lim:N+1:1}, Lemmae \ref{lem:1} and \ref{lem:2} that there exists $U_{K+1}\in D_k^2(\rn)\cap C^{2k}(\rn)$ such that $\lim_{\alpha\to +\infty}\tilde{u}_{\alpha,K+1}=U_{K+1}$ in $C^{2k}_{loc}(\rn)$, where we have that $\Delta_\xi^kU_{K+1}=|U_{K+1}|^{\crit-2}U_{K+1}$ strongly in $\rn$ and weakly in $D_k^2(\rn)$. Using that $|\tilde{u}_{\alpha,K+1}(0)|=1$, we then get that $|U_{K+1}(0)|=1$, so $U_{K+1}\not\equiv 0$ and so that \eqref{min:nrj:U} holds with $V:= U_{K+1}$. All these results prove that $(H_{K+1})$ holds.\end{proof}

\begin{proposition}\label{prop:3.2} There exists $K\geq 1$ such that $(H_K)$ holds and 
\begin{equation}\label{hyp:weak:2}
 R_{\alpha, K}(x)^{\frac{n-2k}{2}}|\ua(x)|\leq C\hbox{ for all }x\in M\hbox{ and }\alpha>0
\end{equation}
 where $R_{\alpha, K}(x):=\min_{i=1,...,K}d_g(x,\xai)$ for all $x\in M$ and $\alpha>0$.
\end{proposition}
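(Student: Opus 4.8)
The plan is to start from the fact, established in Step~1, that $(H_1)$ holds, and to iterate Proposition~\ref{prop:rec:1}. Indeed, as long as $(H_K)$ holds while \eqref{hyp:weak:2} fails, the hypothesis \eqref{hyp:weak:1} of Proposition~\ref{prop:rec:1} is fulfilled, so $(H_{K+1})$ holds after a further extraction. The crux is then to prove that this process cannot go on forever, i.e. that there is an \emph{a priori} bound on the number of concentration points, beyond which \eqref{hyp:weak:2} is forced.

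To obtain such a bound I would invoke energy quantization. Suppose $(H_K)$ holds. For $i\neq j$, the separation \eqref{lim:d:infty}, imposed in $(H_K)$ for every ordered pair, gives both $d_g(\xai,\xaj)/\mai\to+\infty$ and $d_g(\xai,\xaj)/\maj\to+\infty$; hence for any fixed $R>0$ and $\alpha$ large one has $d_g(\xai,\xaj)>R\mai+R\maj$, so that by the triangle inequality the balls $B_{R\mai}(\xai)$, $i=1,\dots,K$, are pairwise disjoint. Therefore, using \eqref{hyp:bnd:nrj:pf},
\begin{equation*}
\sum_{i=1}^K\int_{B_{R\mai}(\xai)}|\ua|^{\crit}\, dv_g\leq \int_M|\ua|^{\crit}\, dv_g\leq \Lambda^{\crit}
\end{equation*}
for $\alpha$ large. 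Exactly as in the derivation of \eqref{ineq:21} (change variables to $\tuai$, use $\tuai\to U_i$ in $C^{2k}_{loc}(\rn)$ and $\tilde g_{\alpha,i}\to\xi$), one gets $\lim_{R\to\infty}\lim_{\alpha\to\infty}\int_{B_{R\mai}(\xai)}|\ua|^{\crit}\, dv_g=\int_{\rn}|U_i|^{\crit}\, dv_\xi\geq K(n,k)^{-\frac{n}{2k}}$, the last inequality being the third bullet of $(H_K)$. Letting $\alpha\to+\infty$ and then $R\to+\infty$ in the displayed inequality yields $K\cdot K(n,k)^{-\frac{n}{2k}}\leq\Lambda^{\crit}$, i.e. $K\leq K(n,k)^{\frac{n}{2k}}\Lambda^{\crit}$.

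Consequently the set of integers $K\geq 1$ for which $(H_K)$ holds is nonempty and bounded above; let $K^\star$ be its maximum. If \eqref{hyp:weak:2} failed for $K^\star$, then \eqref{hyp:weak:1} would hold and Proposition~\ref{prop:rec:1} would produce $(H_{K^\star+1})$, contradicting the maximality of $K^\star$. Hence $\limsup_{\alpha\to+\infty}\sup_{x\in M}R_{\alpha,K^\star}(x)^{\frac{n-2k}{2}}|\ua(x)|<+\infty$, and since for each $\alpha$ this supremum is finite (by continuity of $\ua$ and compactness of $M$) and we argue throughout up to extraction of a subfamily $\alpha\to+\infty$, this gives the uniform bound \eqref{hyp:weak:2}.

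This is the standard ``bubbling stops by energy quantization'' mechanism, so I do not anticipate a genuine obstacle. The one point to handle with care is the use of the mutual separation \eqref{lim:d:infty} to guarantee that the rescaled concentration balls $B_{R\mai}(\xai)$ are eventually pairwise disjoint, which is what lets one add up the per-bubble energy lower bound $K(n,k)^{-\frac{n}{2k}}$ against the global bound $\Lambda^{\crit}$; a secondary, harmless point is the bookkeeping of the successive subsequence extractions, which terminates precisely because $K$ remains bounded by $K(n,k)^{\frac{n}{2k}}\Lambda^{\crit}$.
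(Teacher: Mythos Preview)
Your proof is correct and follows essentially the same approach as the paper's own argument: both derive the a priori bound $K\le \Lambda^{\crit}K(n,k)^{\frac{n}{2k}}$ from the pairwise disjointness of the balls $B_{R\mai}(\xai)$ (via \eqref{lim:d:infty}) together with the per-bubble energy lower bound, then take the maximal $K$ and invoke Proposition~\ref{prop:rec:1} to conclude \eqref{hyp:weak:2}. Your additional remarks on why each $\sup_x$ is finite and on the subsequence bookkeeping are correct and only make explicit what the paper leaves implicit.
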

\begin{proof} Let $K\geq 1$ be such that $(H_K)$ holds. Given $R>0$, it follows from \eqref{lim:d:infty} that for $\alpha>0$ large enough, we have that $B_{R\mai}(\xai)\cap B_{R\maj}(\xaj)=\emptyset$ for all $i\neq j\leq K$. As in the proof of \eqref{ineq:21}, we then get that
\begin{eqnarray*}
\int_M|\ua|^{\crit}\, dv_g&\geq &\int_{\bigcup_{i=1}^K B_{R\mai}(\xai)}|\ua|^{\crit}\, dv_g=\sum_{i=1}^K\int_{  B_{R\mai}(\xai)}|\ua|^{\crit}\, dv_g\\
&\geq & \frac{K}{K(n,k)^{\frac{n}{2k}}}-\eps(R)+o(1)
\end{eqnarray*}
as $\alpha\to +\infty$ where $\lim_{R\to +\infty}\eps(R)=0$. With \eqref{hyp:bnd:nrj:pf}, we then get that $K\leq \Lambda^{\crit} K(n,k)^{\frac{n}{2k}}$. Therefore, since in addition $(H_1)$ holds, we let $K\geq 1$ be the maximal integer such that $(H_K)$ holds. Since $(H_{K+1})$ does not hold,  it follows from Proposition \ref{prop:rec:1} that \eqref{hyp:weak:2} holds. This proves the proposition.\end{proof}
A straightforward corollary is the following:
\begin{lemma}\label{prop:lim:out} Let $u_\infty\in H_k^2(M)$ be the weak limit of $(\ua)_\alpha$ as $\alpha\to +\infty$. Then $u_\infty\in C^{2k}(M)$ is a strong solution to $P_\infty u_\infty=|u_\infty|^{\crit-2}u_\infty$ in $M$ and 
\begin{equation*}
\lim_{\alpha\to\infty}u_\alpha=u_\infty\hbox{ in }C^{2k}_{loc}\left(M-\left\{\lim_{\alpha\to \infty}\xai/\,i=1,...,K\right\}\right).
\end{equation*}
\end{lemma}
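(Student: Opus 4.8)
The plan is to establish both assertions of Lemma \ref{prop:lim:out} by combining the uniform $H_k^2$-bound \eqref{hyp:bnd:nrj:pf:bis} with the pointwise control \eqref{hyp:weak:2} coming from Proposition \ref{prop:3.2}. First I would invoke the reflexivity of $H_k^2(M)$: up to extraction, $(\ua)_\alpha$ converges weakly to some $u_\infty\in H_k^2(M)$ and, by Rellich--Kondrachov, strongly in $L^p(M)$ for every $p<\crit$ and a.e.\ on $M$. In particular $|\ua|^{\crit-2}\ua$ is bounded in $L^{\crit/(\crit-1)}(M)$ and converges a.e.\ to $|u_\infty|^{\crit-2}u_\infty$, so it converges weakly in $L^{\crit/(\crit-1)}(M)$ to that limit (Vitali or the standard weak-convergence-of-bounded-a.e.-convergent-sequences lemma). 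Passing to the limit in the weak formulation $\int_M \ua P_\alpha\varphi\,dv_g=\int_M|\ua|^{\crit-2}\ua\,\varphi\,dv_g$ for $\varphi\in C^\infty(M)$, and using that the coefficients $A_\alpha^{(i)}\to A_\infty^{(i)}$ in $C^{i,\theta}$ (type (SCC)), yields $P_\infty u_\infty=|u_\infty|^{\crit-2}u_\infty$ weakly; then elliptic regularity à la \cite{ADN} (or the bootstrap of Van der Vorst \cite{vdv}/Mazumdar \cite{mazumdar:jde} cited above) upgrades $u_\infty$ to a classical $C^{2k}(M)$ solution.

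Next I would prove the local $C^{2k}$-convergence away from the concentration set. Fix a compact set $\Omega\subset\subset M-\{\lim_\alpha\xai:i=1,\dots,K\}$. On a slightly larger compact neighborhood there is $\delta>0$ with $R_{\alpha,K}(x)\geq\delta$ for all $x$ there and all large $\alpha$; hence \eqref{hyp:weak:2} gives $|\ua(x)|\leq C\delta^{-(n-2k)/2}$ uniformly on that neighborhood. Rewriting \eqref{eq:ua:pf} as the linear equation $P_\alpha\ua=V_\alpha\ua$ with $V_\alpha=|\ua|^{\crit-2}$ now bounded there, elliptic regularity (Theorems D.2--D.3 of \cite{robert:gjms}) bootstraps this $L^\infty$-bound to a uniform $C^{2k,\theta}$-bound on $\Omega$; Arzelà--Ascoli then gives $C^{2k}_{loc}$-convergence of a subsequence, and the limit must agree with the a.e.\ limit $u_\infty$, so the whole sequence (after the running extraction) converges in $C^{2k}_{loc}(M-\{\lim_\alpha\xai\})$.

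The step that requires the most care is the uniform interior regularity bound in the higher-order setting: unlike the case $k=1$, one cannot invoke the maximum principle or Harnack, so the bootstrap must be done purely through the $L^p$ Schauder-type estimates for $\Delta_g^k+\text{lower order}$ with uniformly bounded coefficients, exactly as packaged in \cite{ADN} and restated in \cite{robert:gjms}. One should also check that the cut-off localization does not introduce uncontrolled terms, i.e.\ that the $C^{i,\theta}$-bounds on the $A_\alpha^{(i)}$ from (SCC) suffice at each stage of the iteration; this is routine but is the place where the higher-order structure actually intervenes. Everything else — weak compactness, a.e.\ convergence of the nonlinearity, and passing to the limit in the distributional formulation — is standard. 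Since Proposition \ref{prop:3.2} supplies the crucial pointwise bound, I expect the proof to be short, essentially the two paragraphs above, which is consistent with the excerpt calling this "a straightforward corollary."
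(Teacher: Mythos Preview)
Your proposal is correct and follows essentially the same approach as the paper's proof: weak $H_k^2$-compactness and passage to the limit in the equation to identify $u_\infty$ as a weak (hence, via Mazumdar \cite{mazumdar:jde}, classical) solution, followed by the uniform $L^\infty$-bound from \eqref{hyp:weak:2} on $M_\delta:=M-\bigcup_i\overline{B}_\delta(\lim_\alpha\xai)$ and elliptic regularity to get $C^{2k}_{loc}$-convergence, with uniqueness of the limit identifying it as $u_\infty$. You simply spell out in more detail the steps (Rellich--Kondrachov, weak convergence of the nonlinearity, Arzel\`a--Ascoli) that the paper compresses into one line.
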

\begin{proof} The existence of the weak limit follows from the boundedness of $(\ua)_\alpha$ in $H_k^2(M)$. Passing to the limit in \eqref{eq:ua:pf} yields that $u_\infty\in H_k^2(M)$ is a weak solution to $P_\infty u_\infty=|u_\infty|^{\crit-2}u_\infty$ in $M$. Then, see Mazumdar \cite{mazumdar:jde}, $u_\infty\in C^{2k}(M)$ is a strong solution. We fix $\delta>0$ and we set $M_\delta:=M -  \bigcup_{i=1}^K\overline{B}_\delta(\lim_{\alpha\to \infty}\xai)$. It follows from \eqref{hyp:weak:2} that there exists $C(\delta)>0$ such that $|u_\alpha(x)|\leq C(\delta)$ for all $x\in M_\delta$ and $\alpha>0$. It then follows from \eqref{eq:ua} and elliptic theory  that $(\ua)_\alpha$ has a strong limit in $C^{2k}_{loc}\left(M-\{\lim_{\alpha\to \infty}\xai/\,i=1,...,K\}\right)$. By uniqueness, this limit is $u_\infty$. \end{proof}
\noindent{\bf Step 3: A second family of concentration points}
\begin{lemma}\label{lem:3} Let $(\za)_\alpha\in M$ be such that
\begin{equation}\label{eq:67}
\lim_{\alpha\to +\infty}R_{\alpha, K}(\za)^{\frac{n-2k}{2}}|\ua(\za)-u_\infty(\za)|=c_0>0.
\end{equation}
Then $\lim_{\alpha\to \infty}|\ua(\za)|=+\infty$. We set $\na:=|\ua(\za)|^{-\frac{2}{n-2k}}$ and we define
$$S_0:=\left\{\lim_{\alpha\to +\infty}\frac{\hbox{exp}_{\za}^{-1}(\xai)}{\na}/\,i\in I_0\right\}$$
where $I_0:=\{ i\in\{1,...,K\}\hbox{ such that }d_g(\za,\xai)=O(\na)\}$. Then there exists $V\in D_k^2(\rn)\cap C^{2k}(\rn)$ that is a nonzero solution to $\Delta_\xi^kV=|V|^{\crit-2}V$ strongly in $\rn$ and weakly in $D_k^2(\rn)$ and such that
\begin{equation*}
\lim_{\alpha\to\infty}\na^{\frac{n-2k}{2}}\ua(\hbox{exp}_{\za}(\na \cdot))=V\hbox{ in }C^{2k}_{loc}(\rn-S_0).
\end{equation*}
$$\hbox{Moreover, }\lim_{R\to +\infty}\lim_{\alpha\to +\infty}\int_{B_{R\na}(\za)-\bigcup_{i\in I_0}B_{R^{-1}\na}(\xai)}|\ua|^{\crit}\, dv_g\geq \frac{1}{K(n,k)^{\frac{n}{2k}}}.$$
\end{lemma}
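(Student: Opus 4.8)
The plan is to prove the three assertions in sequence: first that $\za$ is a blow-up point, then the extraction of the rescaled limit $V$ through Lemmas~\ref{lem:1} and \ref{lem:2}, and finally the lower bound on the localized energy. \emph{Blow-up at $\za$.} Suppose, for contradiction, that $|\ua(\za)|$ stays bounded along a subsequence; passing to a further subsequence, assume $\za\to z_\infty$, $\xai\to x_{i,\infty}$ and $R_{\alpha,K}(\za)\to\ell\in[0,\mathrm{diam}_g(M)]$. If $\ell=0$, then $R_{\alpha,K}(\za)^{\frac{n-2k}{2}}|\ua(\za)-u_\infty(\za)|\to 0$ because $u_\infty\in C^{2k}(M)$ and $\ua(\za)$ are bounded, contradicting \eqref{eq:67}. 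If $\ell>0$, then $d_g(z_\infty,x_{i,\infty})=\lim_\alpha d_g(\za,\xai)\geq\ell>0$ for every $i$, so $z_\infty\notin\{x_{i,\infty}\}$ and Lemma~\ref{prop:lim:out} gives $\ua\to u_\infty$ uniformly near $z_\infty$; hence $\ua(\za)-u_\infty(\za)\to 0$ and the left-hand side of \eqref{eq:67} vanishes — again a contradiction. Therefore $|\ua(\za)|\to+\infty$ and $\na\to 0$; moreover, since $\|u_\infty\|_\infty<\infty$, \eqref{eq:67} gives $|\ua(\za)-u_\infty(\za)|=(1+o(1))|\ua(\za)|$, so $\bigl(R_{\alpha,K}(\za)/\na\bigr)^{\frac{n-2k}{2}}\to c_0$, and in particular $R_{\alpha,K}(\za)\asymp\na$.

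\emph{The rescaled limit.} I would apply Lemma~\ref{lem:1} with $\Omega=\rn-S_0$, the main point being to verify hypothesis \eqref{assump:01} on every $\omega\subset\subset\rn-S_0$. For $i\in I_0$ set $y_{\alpha,i}:=\exp_{\za}^{-1}(\xai)/\na$; this is bounded and, up to extraction, $y_{\alpha,i}\to y_{i,\infty}\in S_0$. Using the bi-Lipschitz estimate \eqref{lem:lip} (uniform in the base point by compactness of $M$), for $x\in\omega$ and $i\in I_0$ we get $d_g(\exp_{\za}(\na x),\xai)\geq\frac12\na|x-y_{\alpha,i}|\geq c(\omega)\na$ for $\alpha$ large because $\mathrm{dist}(\omega,S_0)>0$, while for $i\notin I_0$ we get $d_g(\exp_{\za}(\na x),\xai)\geq d_g(\za,\xai)-2\na|x|\geq\frac12 d_g(\za,\xai)\gg\na$. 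Hence $R_{\alpha,K}(\exp_{\za}(\na x))\geq c(\omega)\na$, and \eqref{hyp:weak:2} yields $|\ua(\exp_{\za}(\na x))|\leq C\,R_{\alpha,K}(\exp_{\za}(\na x))^{-\frac{n-2k}{2}}\leq C(\omega)|\ua(\za)|$, which is \eqref{assump:01}. Lemma~\ref{lem:1} then produces $V\in C^{2k}(\rn-S_0)$ solving $\Delta_\xi^kV=|V|^{\crit-2}V$ there, with $|\nabla^iV|\in L^{\frac{2n}{n-2(k-i)}}(\rn-S_0)$ and $\na^{\frac{n-2k}{2}}\ua(\exp_{\za}(\na\cdot))\to V$ in $C^{2k}_{loc}(\rn-S_0)$, and Lemma~\ref{lem:2} upgrades $V$ to an element of $D_k^2(\rn)\cap C^{2k}(\rn)$ solving the equation on all of $\rn$. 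For $V\not\equiv 0$: by the previous paragraph $d_g(\za,\xai)\geq R_{\alpha,K}(\za)\asymp\na$ for every $i$, so each $y_{i,\infty}$ satisfies $|y_{i,\infty}|\geq c>0$, i.e. $0\notin S_0$; evaluating the $C^{2k}_{loc}(\rn-S_0)$ convergence at $x=0$ and using $|\na^{\frac{n-2k}{2}}\ua(\za)|=1$ gives $|V(0)|=1$, and then \eqref{min:nrj:U} applies to $V$.

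\emph{Localized energy.} With $\hat g_\alpha:=\exp_{\za}^\star g(\na\cdot)\to\xi$ in $C^{2k}_{loc}(\rn)$ and $\hat u_\alpha:=\na^{\frac{n-2k}{2}}\ua(\exp_{\za}(\na\cdot))$, the change of variables $y=\exp_{\za}(\na x)$ gives $|\ua|^{\crit}\,dv_g=|\hat u_\alpha|^{\crit}\,dv_{\hat g_\alpha}$ (since $\frac{n-2k}{2}\crit=n$), with $B_{R\na}(\za)$ corresponding to $B_R(0)$ in the normal chart and, by \eqref{lem:lip}, $B_{R^{-1}\na}(\xai)$ corresponding to a subset of $B_{2/R}(y_{\alpha,i})$ for $i\in I_0$; for $\alpha$ large $B_{2/R}(y_{\alpha,i})\subset B_{3/R}(y_{i,\infty})$. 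Thus the rescaled domain of integration contains the fixed compact set $K_R:=\overline{B_R(0)}-\bigcup_{y\in S_0}B_{3/R}(y)\subset\rn-S_0$, on which $\hat u_\alpha\to V$ and $dv_{\hat g_\alpha}\to dv_\xi$ uniformly, so
\[
\liminf_{\alpha\to+\infty}\int_{B_{R\na}(\za)-\bigcup_{i\in I_0}B_{R^{-1}\na}(\xai)}|\ua|^{\crit}\,dv_g\ \geq\ \int_{K_R}|V|^{\crit}\,dv_\xi .
\]
Since $K_R\nearrow\rn-S_0$ as $R\to+\infty$, $|V|^{\crit}\in L^1(\rn)$ and $S_0$ is null, the right-hand side increases to $\int_{\rn}|V|^{\crit}\,dv_\xi\geq K(n,k)^{-\frac{n}{2k}}$ by \eqref{min:nrj:U}, which is the asserted inequality.

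\emph{Main obstacle.} The delicate point is the quantitative estimate $R_{\alpha,K}(\za)\asymp\na$: this is exactly where the hypothesis $c_0>0$ enters, and it is what simultaneously makes the rescaling non-degenerate, forces $0\notin S_0$ (so that $|V(0)|=1$), and controls the geometry in the last step. The remaining subtlety is the bookkeeping there — matching the removed geodesic balls $B_{R^{-1}\na}(\xai)$ with Euclidean balls of radius $\asymp R^{-1}$ centered at the points of $S_0$, and verifying that after removing them one still recaptures the full Euclidean mass $\int_{\rn}|V|^{\crit}$ in the limit $R\to+\infty$. Everything else is a routine combination of Lemmas~\ref{lem:1}--\ref{lem:2} with elliptic regularity and \eqref{hyp:weak:2}.
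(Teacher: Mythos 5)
Your proposal is correct and follows essentially the same route as the paper: deduce $R_{\alpha,K}(\za)\asymp\na$ from \eqref{eq:67} and Lemma \ref{prop:lim:out}, verify \eqref{assump:01} on compact subsets of $\rn-S_0$ via \eqref{lem:lip} and \eqref{hyp:weak:2}, apply Lemmas \ref{lem:1}--\ref{lem:2}, get $|V(0)|=1$ from $0\notin S_0$, and capture the energy by the same change of variables with the removed balls matched to Euclidean balls of radius $\asymp R^{-1}$ around the points of $S_0$. No gaps worth noting.
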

\begin{proof} It follows from Lemma \ref{prop:lim:out} and \eqref{eq:67} that  $\lim_{\alpha\to +\infty}R_{\alpha, K}(\za)=0$. In addition, since $u_\infty$ is bounded on $M$, we get  $\lim_{\alpha\to +\infty}R_{\alpha, K}(\za)^{\frac{n-2k}{2}}|\ua(\za)|=c_0$ due to  \eqref{eq:67}. The definition of $\na$ yields $\lim_{\alpha\to +\infty}\na=0$ and 
\begin{equation}\label{eq:68}
\lim_{\alpha\to +\infty}\frac{R_{\alpha, K}(\za)}{\na}=c_0^{\frac{2}{n-2k}}.
\end{equation}
For any $i\in I_0$, we let $\theta_{\alpha, i}\in \rn$ be such that $\xai=\hbox{exp}_{\za}(\na \theta_{\alpha, i})$ for all $\alpha>0$. With the definition of $I_0$, there exists $C>0$ such that $|\theta_{\alpha,i}|\leq C$ for all $i\in I_0$ and $\alpha>0$. We then set $\theta_{\infty,i}:=\lim_{\alpha\to +\infty}\theta_{\alpha,i}$ up to extraction, so that $S_0=\{\theta_{\infty,i}/\, i\in I_0\}$. We fix $R>0$, and we take $x\in B_R(0)$. We set $i\in\{1,...,K\} -  I_0$. Then $\lim_{\alpha\to +\infty}\na^{-1}d_g(\za,\xai)=+\infty$. Therefore,
$$d_g(\xai, \hbox{exp}_{\za}(\na x))\geq d_g(\xai,\za)-R\na\geq 2\na\hbox{ as }\alpha\to +\infty.$$
We now choose $i\in I_0$. It follows from \eqref{lem:lip} that
$$d_g(\xai, \hbox{exp}_{\za}(\na x))= d_g(\hbox{exp}_{\za}(\na \theta_{\alpha, i}), \hbox{exp}_{\za}(\na x))\geq \frac{1}{2}\na |x-\theta_{\alpha,i}|.$$
Therefore, for all $R>0$, there exists $c_1>0$ such that $R_{\alpha, K}(\hbox{exp}_{\za}(\na x))\geq c_1\na\min_{i\in I_0}|x-\theta_{\alpha,i}|$ for all $x\in B_R(0)$ and $\alpha>0$ large enough depending only on $R$. It then follows from \eqref{hyp:weak:2} that
$$c_1^{\frac{n-2k}{2}}\min_{i\in I_0}|x-\theta_{\alpha,i}|^{\frac{n-2k}{2}}\na^{\frac{n-2k}{2}}|\ua(\hbox{exp}_{\za}(\na x))|\leq C\hbox{ for all }x\in B_R(0).$$
Therefore, for all $\omega\subset\subset \rn-S_0=\rn-\{\theta_{\infty,i}/\, i\in I_0\}$, there exists $C(\omega)>0$ such that $|\ua(\hbox{exp}_{\za}(\na x))|\leq C(\omega)|\ua(\za)|$ for all $x\in\omega$. We set 
$$\hat{u}_\alpha(x):=\na^{\frac{n-2k}{2}}\ua(\hbox{exp}_{z_\alpha}(\na x))\hbox{ for all }x\in B_{\na^{-1}i_g(M)}(0).$$
It then follows from Lemmae \ref{lem:1} and \ref{lem:2} that there exists $V\in D_k^2(\rn)\cap C^{2k}(\rn)$ such that $\lim_{\alpha\to +\infty}\hat{u}_\alpha=V$ in $C^{2k}_{loc}(\rn-S_0)$. It follows from \eqref{eq:68} that $|\theta_{\alpha,i}|\geq c_0^{\frac{2}{n-2k}}+o(1)$ for all $\alpha>0$ and $i\in I_0$, so that $\theta_{\infty,i}\neq 0$, and then $0\in\rn-S_0$. We then get that $|V(0)|=\lim_{\alpha\to +\infty}|\hat{u}_\alpha(0)|=1$, and then $V\not\equiv 0$ and \eqref{min:nrj:U} holds. We fix $R>0$. Setting $\hat{g}_\alpha:=(\hbox{exp}_{\za}^\star g)(\na\cdot)$, a change of variable yields
\begin{eqnarray*}
&&\int_{B_{R\na}(\za)-\bigcup_{i\in I_0}B_{R^{-1}\na}(\xai)}|\ua|^{\crit}\, dv_g\\
&&=\int_{B_{R}(0)-\bigcup_{i\in I_0}\na^{-1}\hbox{exp}_{\za}^{-1}(B_{R^{-1}\na}(\xai))}|\hat{u}_\alpha|^{\crit}\, dv_{\hat{g}_\alpha}\\
&&\geq \int_{B_{R}(0)-\bigcup_{i\in I_0} B_{2R^{-1} }(\theta_{\alpha, i})}|\hat{u}_\alpha|^{\crit}\, dv_{\hat{g}_\alpha}
\end{eqnarray*}
Therefore, using  \eqref{min:nrj:U}, we get that
\begin{eqnarray*}
&&\lim_{R\to +\infty}\lim_{\alpha\to +\infty}\int_{B_{R\na}(\za)-\bigcup_{i\in I_0}B_{R^{-1}\na}(\xai)}|\ua|^{\crit}\, dv_g\\
&&\geq \lim_{R\to +\infty}\int_{B_{R}(0)-\bigcup_{i\in I_0} B_{2R^{-1} }(\theta_{\infty, i})}|V|^{\crit}\, dv_{\xi}\geq \int_{\rn}|V|^{\crit}\, dv_{\xi}\geq \frac{1}{K(n,k)^{\frac{n}{2k}}}.
\end{eqnarray*}
The Lemma is proved.\end{proof}

\begin{defi} We say that $(\tilde{H}_{0})$ holds if $(H_K)$ holds as in Proposition \ref{prop:3.2}. Given $L\geq 1$, we say that $(\tilde{H}_{L})$ holds if for all $p=1,...,L$ there exists $(\yap)_\alpha\in M$ such that $\nap:=|\ua(\yap)|^{\frac{-2}{n-2k}}\to 0$ as  $\alpha\to +\infty$, and, denoting
$$I_p:=\{i\in\{1,...,K\}\hbox{ such that }d_g(\xai,\yap)=O(\nap)\},$$
there exists $\eps_0>0$ such that for all $i\in\{1,...,K\}$ and $p\in\{1,...,L\}$, we have that
\begin{equation}\label{ineq:dist:xai:yap}
\lim_{\alpha\to+\infty}\frac{d_g(\yap,\xai)}{\mai}=+\infty\hbox{ and }\frac{d_g(\yap,\xai)}{\nap}\geq\eps_0.
\end{equation}
Moreover, for $p,q\in \{1,...,L\}$ such that $p\neq q$, then
either 
\begin{equation}\label{ineq:dist:yap:yaq:1}
\lim_{\alpha\to +\infty}\frac{d_g(\yap,\yaq)}{\nap}=+\infty
\end{equation} 
\begin{equation}\label{ineq:dist:xai:yap:yaq:2}
\hbox{or }\left\{\lim_{\alpha\to +\infty}\frac{d_g(\yap,\yaq)}{\nap}=c_{p,q}\in (0,+\infty)\hbox{ and }\naq=o(\nap)\right\}.
\end{equation}
In addition, for any $p\in \{1,...,L\}$, there exists $V_p\in D_k^2(\rn)\cap C^{2k}(\rn)-\{0\}$ such that $\Delta_\xi^k V_p=|V_p|^{\crit-2}V_p$ strongly in $\rn$ and weakly in $D_k^2(\rn)$ and
\begin{equation}\label{cv:tvap}
\tvap :=\nap^{\frac{n-2k}{2}}\ua(\hbox{exp}_{\yap}(\nap \cdot))\to V_p\hbox{ in }C^{2k}_{loc}(\rn-S_p)
\end{equation} 
\begin{equation}\label{def:Sp}
\hbox{where }S_p:=\left\{\lim_{\alpha\to \infty}\frac{\hbox{exp}_{\yap}^{-1}(\xai)}{\nap}/\, i\in I_p\right\}.
\end{equation}
\end{defi}
In the sequel, for $i\in \{1,...,K\}$, $p\in\{1,...,L\}$, $\alpha>0$ and $R>0$, we set
$$\Omega_{i,\alpha}(R):=B_{R\mai}(\xai), $$
$$\tilde{\Omega}_{p,\alpha}(R):= B_{R\nap}(\yap)-\bigcup_{j\in I_p}B_{R^{-1}\nap}(\xaj).$$

\begin{proposition}\label{prop:4.4} Assume that $(\tilde{H}_L)$ holds for some $L\geq 0$. Then $K+L\leq \Lambda^{\crit} K(n,k)^{\frac{n}{2k}}$.
\end{proposition}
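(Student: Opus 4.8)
The plan is to show that $\ua$ carries at least $K(n,k)^{-n/2k}$ units of $L^{\crit}$--energy near each of the $K+L$ concentration centers $\xai$ ($i=1,\dots,K$) and $\yap$ ($p=1,\dots,L$), that these pieces of energy are supported in pairwise disjoint subsets of $M$, and then to read off the bound from $\int_M|\ua|^{\crit}\,dv_g\le\Lambda^{\crit}$ (which is \eqref{hyp:bnd:nrj:pf}).

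For the energy lower bounds, for a fixed $i$ I would rescale $\int_{\Omega_{i,\alpha}(R)}|\ua|^{\crit}\,dv_g$ via $x=\hbox{exp}_{\xai}(\mai\,\cdot)$ into $\int_{B_R(0)}|\tuai|^{\crit}\,dv_{\tilde{g}_{\alpha,i}}$, let $\alpha\to+\infty$ using \eqref{cv:tuai} and $\tilde{g}_{\alpha,i}\to\xi$, then let $R\to+\infty$, to obtain $\lim_{R\to+\infty}\lim_{\alpha\to+\infty}\int_{\Omega_{i,\alpha}(R)}|\ua|^{\crit}\,dv_g=\int_{\rn}|U_i|^{\crit}\,dv_\xi\ge K(n,k)^{-n/2k}$ by \eqref{min:nrj:U}; this is the argument already carried out for \eqref{ineq:21}. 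For a fixed $p$, the analogous rescaling $x=\hbox{exp}_{\yap}(\nap\,\cdot)$, keeping the punctures $B_{R^{-1}\nap}(\xaj)$, $j\in I_p$ (which shrink to small balls around the points of $S_p$), together with \eqref{cv:tvap}, gives $\lim_{R\to+\infty}\lim_{\alpha\to+\infty}\int_{\tilde{\Omega}_{p,\alpha}(R)}|\ua|^{\crit}\,dv_g\ge\int_{\rn}|V_p|^{\crit}\,dv_\xi\ge K(n,k)^{-n/2k}$; this is exactly the change of variables and limiting argument at the end of the proof of Lemma \ref{lem:3}, with $\za=\yap$ and \eqref{cv:tvap} in place of the convergence established there.

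The main point is to prove that for each $R>0$ there is $\alpha(R)$ such that, for $\alpha\ge\alpha(R)$, the $K+L$ sets $\Omega_{1,\alpha}(R),\dots,\Omega_{K,\alpha}(R),\tilde{\Omega}_{1,\alpha}(R),\dots,\tilde{\Omega}_{L,\alpha}(R)$ are pairwise disjoint. Since $\Omega_{i,\alpha}(R)\subset B_{R\mai}(\xai)$ and $\tilde{\Omega}_{p,\alpha}(R)\subset B_{R\nap}(\yap)$, this reduces to comparing centers and radii. The pairs $\Omega_{i,\alpha}(R),\Omega_{j,\alpha}(R)$ with $i\ne j$ are disjoint by \eqref{lim:d:infty}. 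For $\Omega_{i,\alpha}(R)$ against $\tilde{\Omega}_{p,\alpha}(R)$: if $i\notin I_p$ then $d_g(\xai,\yap)/\nap\to+\infty$ and $d_g(\xai,\yap)/\mai\to+\infty$ by \eqref{ineq:dist:xai:yap}, so the two balls separate for $\alpha$ large; if $i\in I_p$ then $d_g(\xai,\yap)=O(\nap)$ together with $d_g(\xai,\yap)/\mai\to+\infty$ forces $\mai=o(\nap)$, whence $B_{R\mai}(\xai)\subset B_{R^{-1}\nap}(\xai)$ for $\alpha$ large, and $B_{R^{-1}\nap}(\xai)$ has been removed from $\tilde{\Omega}_{p,\alpha}(R)$. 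For $\tilde{\Omega}_{p,\alpha}(R)$ against $\tilde{\Omega}_{q,\alpha}(R)$ with $p\ne q$: if both $d_g(\yap,\yaq)/\nap\to+\infty$ and $d_g(\yap,\yaq)/\naq\to+\infty$ the balls separate; otherwise, exchanging $p$ and $q$ if necessary, the dichotomy \eqref{ineq:dist:yap:yaq:1}--\eqref{ineq:dist:xai:yap:yaq:2} places us in the case $\lim_{\alpha}d_g(\yap,\yaq)/\nap=c_{p,q}\in(0,+\infty)$ and $\naq=o(\nap)$. Then $\hbox{exp}_{\yap}^{-1}(\yaq)/\nap$ converges, up to extraction, to some $y^\star$ with $|y^\star|=c_{p,q}>0$; if $y^\star\notin S_p$, then \eqref{cv:tvap} forces $\nap^{\frac{n-2k}{2}}|\ua(\yaq)|=|\tvap(\hbox{exp}_{\yap}^{-1}(\yaq)/\nap)|\to|V_p(y^\star)|<\infty$, contradicting $\nap^{\frac{n-2k}{2}}|\ua(\yaq)|=(\nap/\naq)^{\frac{n-2k}{2}}\to+\infty$; hence $y^\star\in S_p$, so $d_g(\yaq,\xai)=o(\nap)$ for some $i\in I_p$, and therefore $B_{R\naq}(\yaq)\subset B_{R^{-1}\nap}(\xai)$ for $\alpha$ large, which is again a removed ball of $\tilde{\Omega}_{p,\alpha}(R)$.

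With disjointness in hand the conclusion is immediate: for $R>0$ and $\alpha\ge\alpha(R)$,
$$\Lambda^{\crit}\ge\int_M|\ua|^{\crit}\,dv_g\ge\sum_{i=1}^K\int_{\Omega_{i,\alpha}(R)}|\ua|^{\crit}\,dv_g+\sum_{p=1}^L\int_{\tilde{\Omega}_{p,\alpha}(R)}|\ua|^{\crit}\,dv_g,$$
and letting $\alpha\to+\infty$ and then $R\to+\infty$ and invoking the energy lower bounds yields $\Lambda^{\crit}\ge(K+L)K(n,k)^{-n/2k}$, i.e.\ the claim. I expect the last disjointness case to be the main obstacle: one has to rule out that two second-family bumps at mutual distance comparable to the larger of their concentration scales overlap, and this works only because the smaller bump is then forced into one of the punctured balls $B_{R^{-1}\nap}(\xai)$ around a first-family point — a fact delivered precisely by the strong local convergence \eqref{cv:tvap} away from $S_p$.
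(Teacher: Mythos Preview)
Your proof is correct and close in spirit to the paper's, but the execution is genuinely different. The paper does \emph{not} prove that the $K+L$ domains $\Omega_{i,\alpha}(R)$, $\tilde\Omega_{p,\alpha}(R)$ are pairwise disjoint; instead it shows that the $L^{\crit}$--mass carried by each pairwise intersection tends to $0$ as $\alpha\to+\infty$ (this is \eqref{lim:int:1}, \eqref{lim:int:2}, \eqref{lim:int:3}), and then concludes via an inclusion--exclusion inequality combining \eqref{eq:36} and \eqref{eq:37}. Your disjointness route is sharper and more direct: for $\Omega_{i,\alpha}(R)\cap\tilde\Omega_{p,\alpha}(R)$ with $i\in I_p$, and for $\tilde\Omega_{p,\alpha}(R)\cap\tilde\Omega_{q,\alpha}(R)$ in the bounded--ratio case, you actually exhibit an inclusion of the smaller domain into one of the removed balls $B_{R^{-1}\nap}(\xaj)$, $j\in I_p$, which immediately gives empty overlap rather than merely small energy on the overlap. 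Your key device in the $\tilde\Omega_p$--$\tilde\Omega_q$ case --- using $|\tvap(\hbox{exp}_{\yap}^{-1}(\yaq)/\nap)|=(\nap/\naq)^{(n-2k)/2}\to+\infty$ together with \eqref{cv:tvap} to force the rescaled location of $\yaq$ into $S_p$ --- is exactly what is needed, and it fills in a case that the paper only sketches with ``Arguing as above'' before \eqref{lim:int:3}. The one cosmetic point is that your ``up to extraction'' in that step should be read as a contradiction argument: if disjointness failed along some subsequence, your reasoning on a further subsequence yields the inclusion into a removed ball and hence disjointness, a contradiction.
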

\begin{proof} It follows from \eqref{hyp:bnd:nrj:pf} that
\begin{equation}\label{eq:36}
\int_{\bigcup_i\Omega_{i,\alpha}(R)\cup \bigcup_p\tilde{\Omega}_{p,\alpha}(R) }|\ua|^{\crit}\, dv_g\leq \int_M |\ua|^{\crit}\, dv_g\leq\Lambda^{\crit}.
\end{equation}
It follows from \eqref{cv:tuai}, \eqref{cv:tvap} and \eqref{min:nrj:U} that for any $i\in\{1,...,K\}$ and $p\in\{1,...,L\}$, we have that
\begin{equation}
\left\{\begin{array}{c}
\lim_{R\to +\infty}\lim_{\alpha\to +\infty}\int_{\Omega_{i,\alpha}(R) }|\ua|^{\crit}\, dv_g\geq \frac{1}{K(n,k)^{\frac{n}{2k}}}\\
\lim_{R\to +\infty}\lim_{\alpha\to +\infty}\int_{\tilde{\Omega}_{p,\alpha}(R)}|\ua|^{\crit}\, dv_g\geq \frac{1}{K(n,k)^{\frac{n}{2k}}}\end{array}\right\}.\label{eq:37}
\end{equation}
Therefore, the conclusion of the proposition holds provided  that the integral is neglictible on the intersection of the domains. This is what we prove now.

\smallskip\noindent  For $i,j\in \{1,...,K\}$, $i\neq j$, it follows from \eqref{lim:d:infty} that for $\alpha>0$ large enough, we have that $\Omega_{i,\alpha}(R)\cap \Omega_{j,\alpha}(R)=\emptyset$. Therefore
\begin{equation}\label{lim:int:1}
\lim_{\alpha\to \infty}\int_{\Omega_{i,\alpha}(R)\cap \Omega_{j,\alpha}(R)}|\ua|^{\crit}\, dv_g=0.
\end{equation}
\smallskip\noindent For $i\in \{1,...,K\}$ and $p\in\{1,...,L\}$. Assume that, up to to extraction,
$$\Omega_{i,\alpha}(R)\cap \tilde{\Omega}_{p,\alpha}(R)\neq \emptyset\hbox{ for }\alpha\to +\infty.$$
Therefore, there exists $(a_\alpha)_\alpha\in M$ lying in the intersection, so that $d_g(a_\alpha, \xai)\leq R\mai$, $d_g(a_\alpha, \yap)\leq R\nap$ and $d_g(a_\alpha, \xaj)\geq R^{-1}\nap$ for all $j\in I_p$. The triangle inequality yields
\begin{equation}\label{eq:34}
d_g(\xai,\xaj)\geq d_g(a_\alpha,\xaj)-d_g(a_\alpha,\xai)\geq R^{-1}\nap-R\mai\hbox{ for all }j\in I_p.
\end{equation}
Another application of the triangle inequality yields $d_g(\xai,\yap)=O(\mai+\nap)$, and then, with  \eqref{ineq:dist:xai:yap}, we get 
\begin{equation}\label{eq:35}
\mai=o(\nap)\hbox{ and }d_g(\xai,\yap)=O(\nap).
\end{equation}
We set $\Theta_\alpha\in B_{\nap^{-1}i_g(M)}(0)\subset\rn$ be such that $\xai=\hbox{exp}_{\yap}(\nap \Theta_\alpha)$ for all $\alpha>0$. It follows from \eqref{eq:35} that $|\Theta_\alpha|=O(1)$ and then there exists $\Theta_\infty\in\rn$ such that $\lim_{\alpha\to +\infty}\Theta_\alpha=\Theta_\infty$. It follows from \eqref{lem:lip} that
$$\Omega_{i,\alpha}(R)\cap \tilde{\Omega}_{p,\alpha}(R)\subset B_{R\mai}(\xai)\subset \hbox{exp}_{\yap}\left(\nap B_{2R\mai/\nap}(\Theta_\alpha)\right).$$
We claim that $\Theta_\infty\not\in S_p$, where $S_p$ is as \eqref{def:Sp}. Indeed, it follows from  \eqref{eq:34} and \eqref{eq:35} that $d_g(\xai,\xaj)\geq (2R)^{-1}\nap$ for all $j\in I_p$, and then, using \eqref{lem:lip},  and passing to the limit, we get that $|\Theta_\infty-\Theta|\geq (4R)^{-1}$ for all $\Theta\in S_p$. Therefore $\Theta_\infty\not\in S_p$ and the claim is proved.

\smallskip\noindent Taking $\tvap$ as in \eqref{cv:tvap}, with a change of variable, we get that
\begin{eqnarray*}\int_{\Omega_{i,\alpha}(R)\cap \tilde{\Omega}_{p,\alpha}(R)}|\ua|^{\crit}\, dv_g&\leq& \int_{\hbox{exp}_{\yap}\left(\nap B_{2R\mai/\nap}\Theta_\alpha)\right)}|\ua|^{\crit}\, dv_g\\
&\leq& \int_{B_{2R\mai/\nap}(\Theta_\alpha)}|\tvap|^{\crit}\, dv_{\tilde{g}_{\alpha,p}}
\end{eqnarray*}
where $\tilde{g}_{\alpha,p}:=\hbox{exp}_{\yap}^{\star}g(\nap\cdot)$. It then follows from \eqref{cv:tvap}, $\Theta_\alpha\to\Theta_\infty\in\rn-S_p$ and  $\mai=o(\nap)$ that 
\begin{equation}\label{lim:int:2}
\lim_{\alpha\to \infty}\int_{\Omega_{i,\alpha}(R)\cap \tilde{\Omega}_{p,\alpha}(R)}|\ua|^{\crit}\, dv_g=0.
\end{equation}
Note that when the domain is empty, then this result is trivial.

\medskip\noindent We now choose $p,q\in \{1,...,L\}$ such that $p\neq q$. Arguing as above, we get that
\begin{equation}\label{lim:int:3}
\lim_{\alpha\to \infty}\int_{\tilde{\Omega}_{p,\alpha}(R)\cap \tilde{\Omega}_{q,\alpha}(R)}|\ua|^{\crit}\, dv_g=0.
\end{equation}

\smallskip\noindent We now can conclude the proof of Proposition \ref{prop:4.4}. It follows from \eqref{eq:36}, \eqref{eq:37}, \eqref{lim:int:1}, \eqref{lim:int:2} and \eqref{lim:int:3} that  $K+L\leq \Lambda^{\crit} K(n,k)^{\frac{n}{2k}}$. The proposition is proved.\end{proof}

\begin{proposition} Assume that $(\tilde{H}_L)$ holds for some $L\geq 0$. Assume that
\begin{equation}\label{hyp:67}
\lim_{R\to \infty}\lim_{\alpha\to \infty}\sup_{\begin{array}{c}
x\in M-\Omega_{i,\alpha}(R)\\
x\in M- \tilde{\Omega}_{p,\alpha}(R)\end{array}}\min\{R_{\alpha,K}(x),\tilde{R}_{\alpha,L}(x)\}^{\frac{n-2k}{2}}|\ua(x)-u_\infty(x)|>0,
\end{equation}
where $\tilde{R}_{\alpha,L}(x):=\min\{d_g(x, \yap)/\, p=1,...,L\}$. Then $(\tilde{H}_{L+1})$ holds.
\end{proposition}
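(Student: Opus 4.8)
The strategy is to reproduce, with the two--scale weight, the selection--and--rescaling argument of Proposition~\ref{prop:rec:1} and Lemma~\ref{lem:3}. Set
$$w_\alpha(x):=\min\{R_{\alpha,K}(x),\tilde R_{\alpha,L}(x)\}^{\frac{n-2k}{2}}\,|\ua(x)-u_\infty(x)|.$$
Since $R\mapsto M-\bigcup_i\Omega_{i,\alpha}(R)-\bigcup_p\tilde\Omega_{p,\alpha}(R)$ is nonincreasing, \eqref{hyp:67} means that there is $c_0>0$ such that for every fixed $R$ one has $\lim_{\alpha}\sup\{w_\alpha(x):x\notin\bigcup_i\Omega_{i,\alpha}(R)\cup\bigcup_p\tilde\Omega_{p,\alpha}(R)\}\ge c_0$. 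A diagonal extraction produces $R_\alpha\to+\infty$ and a family $(y_{\alpha,L+1})_\alpha$ lying outside all the $\Omega_{i,\alpha}(R_\alpha)$ and $\tilde\Omega_{p,\alpha}(R_\alpha)$, with $w_\alpha(y_{\alpha,L+1})\ge c_0/2$ and, after slowing $R_\alpha$ down if needed, $w_\alpha(y_{\alpha,L+1})\ge\frac12\sup w_\alpha$ over that complement. Since $u_\infty\in C^{2k}(M)$ is bounded and $\ua\to u_\infty$ in $C^{2k}_{loc}$ away from the finitely many limit concentration points (Lemma~\ref{prop:lim:out} for the $\xai$'s, and the analogous convergence off the $\yap$'s that $(\tilde H_L)$ provides), the bound $w_\alpha(y_{\alpha,L+1})\ge c_0/2$ forces $\min\{R_{\alpha,K},\tilde R_{\alpha,L}\}(y_{\alpha,L+1})\to0$, hence $|\ua(y_{\alpha,L+1})|\to+\infty$; set $\nu_{\alpha,L+1}:=|\ua(y_{\alpha,L+1})|^{-\frac{2}{n-2k}}\to0$.

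Next I read off the separation relations. From $y_{\alpha,L+1}\notin\Omega_{i,\alpha}(R_\alpha)$ one gets $d_g(y_{\alpha,L+1},\xai)\ge R_\alpha\mai$, which is the first half of \eqref{ineq:dist:xai:yap} with $p=L+1$. Inserting $|\ua(y_{\alpha,L+1})|\asymp\nu_{\alpha,L+1}^{-\frac{n-2k}{2}}$ into $w_\alpha(y_{\alpha,L+1})\ge c_0/2$ gives $\min\{R_{\alpha,K},\tilde R_{\alpha,L}\}(y_{\alpha,L+1})\ge\eps_0\nu_{\alpha,L+1}$ with $\eps_0\asymp c_0^{\frac{2}{n-2k}}$, whence $d_g(y_{\alpha,L+1},\xai)\ge\eps_0\nu_{\alpha,L+1}$ and $d_g(y_{\alpha,L+1},\yap)\ge\eps_0\nu_{\alpha,L+1}$ for all $i,p$; this is the second half of \eqref{ineq:dist:xai:yap}. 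For the dichotomy \eqref{ineq:dist:yap:yaq:1}--\eqref{ineq:dist:xai:yap:yaq:2} with $q=L+1$ and $p\le L$, extract a subsequence along which $d_g(\yap,y_{\alpha,L+1})/\nap\to\ell\in[0,+\infty]$. If $\ell=+\infty$ we are in \eqref{ineq:dist:yap:yaq:1}. The value $\ell=0$ is impossible: then $y_{\alpha,L+1}\in B_{R_\alpha\nap}(\yap)$, so, being outside $\tilde\Omega_{p,\alpha}(R_\alpha)$, it lies in $B_{R_\alpha^{-1}\nap}(\xaj)$ for some $j\in I_p$, whence $\nap^{-1}\exp_{\yap}^{-1}(y_{\alpha,L+1})$ would converge both to $0$ and to an element of $S_p$ of norm $\ge\eps_0$. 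If $\ell\in(0,+\infty)$, the same inclusion gives $d_g(y_{\alpha,L+1},\xaj)\le R_\alpha^{-1}\nap=o(\nap)$ for some $j\in I_p$, and then $\eps_0\nu_{\alpha,L+1}\le R_{\alpha,K}(y_{\alpha,L+1})\le d_g(y_{\alpha,L+1},\xaj)=o(\nap)$ forces $\nu_{\alpha,L+1}=o(\nap)$, i.e.\ \eqref{ineq:dist:xai:yap:yaq:2}; towers with $y_{\alpha,L+1}$ as the coarser element are covered by \eqref{ineq:dist:xai:yap:yaq:2} as well.

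To produce the bubble, I argue as for \eqref{ineq:28}, now with the weight $\min\{R_{\alpha,K},\tilde R_{\alpha,L}\}$ and the near-maximality of $w_\alpha$ on the complement of the bubble domains, together with \eqref{hyp:weak:2}: for every $\omega\subset\subset\rn-S_{L+1}$ there is $C(\omega)$ so that $|\ua(\exp_{y_{\alpha,L+1}}(\nu_{\alpha,L+1}x))|\le C(\omega)|\ua(y_{\alpha,L+1})|$ for $x\in\omega$ and $\alpha$ large, where $S_{L+1}$ is given by \eqref{def:Sp} with $p=L+1$ and $I_{L+1}:=\{i: d_g(\xai,y_{\alpha,L+1})=O(\nu_{\alpha,L+1})\}$. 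Lemmae~\ref{lem:1} and~\ref{lem:2} then yield $V_{L+1}\in D_k^2(\rn)\cap C^{2k}(\rn)$ solving $\Delta_\xi^k V_{L+1}=|V_{L+1}|^{\crit-2}V_{L+1}$ on $\rn$, with $\tilde v_{\alpha,L+1}:=\nu_{\alpha,L+1}^{\frac{n-2k}{2}}\ua(\exp_{y_{\alpha,L+1}}(\nu_{\alpha,L+1}\cdot))\to V_{L+1}$ in $C^{2k}_{loc}(\rn-S_{L+1})$; since $|\tilde v_{\alpha,L+1}(0)|=1$ and $0\notin S_{L+1}$ (every element of $S_{L+1}$ has norm $\ge\eps_0$), $V_{L+1}\ne0$ and \eqref{min:nrj:U} applies. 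With the previous step this is precisely $(\tilde H_{L+1})$.

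The main obstacle is this pointwise bound, i.e.\ checking that $S_{L+1}$ --- which records only the rescaled positions of first--family points near $y_{\alpha,L+1}$ --- captures \emph{all} the concentration of $\tilde v_{\alpha,L+1}$. The delicate configuration is a pre-existing second--family bubble $\yap$ at bounded $\nu_{\alpha,L+1}$--rescaled distance from $y_{\alpha,L+1}$: when $\nap\asymp\nu_{\alpha,L+1}$ it merely contributes a smooth piece to $V_{L+1}$ (and its first--family satellites, if any, already belong to $S_{L+1}$), so the dangerous subcase is $\nap=o(\nu_{\alpha,L+1})$, which I would rule out by a matching argument: at distance $\asymp\nu_{\alpha,L+1}$ from $\yap$ one has $|\ua|\asymp\nap^{-\frac{n-2k}{2}}(\nu_{\alpha,L+1}/\nap)^{-(n-2k)}$ --- the tail of the $\yap$--bubble, using the uniform decay of the finite--energy solution $V_p$ --- which, matched against $|\ua(y_{\alpha,L+1})|\asymp\nu_{\alpha,L+1}^{-\frac{n-2k}{2}}$, forces $\nap\asymp\nu_{\alpha,L+1}$, a contradiction. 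An alternative is to refine the choice of $y_{\alpha,L+1}$ among the near-maximisers of $w_\alpha$, as in the second--order treatments.
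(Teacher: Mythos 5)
Your selection of $y_{\alpha,L+1}$ and the verification of the distance relations \eqref{ineq:dist:xai:yap}--\eqref{ineq:dist:xai:yap:yaq:2} follow the paper's scheme and are essentially correct. The gap is in the step you yourself flag as ``the main obstacle'': the local bound needed to pass to the limit off $S_{L+1}$. Your proposed resolution is a matching argument asserting that at distance $\asymp\nu_{\alpha,L+1}$ from $\yap$ one has $|\ua|\asymp\nap^{-\frac{n-2k}{2}}(\nu_{\alpha,L+1}/\nap)^{-(n-2k)}$ ``using the uniform decay of the finite--energy solution $V_p$''. This is not available at this stage: the convergence \eqref{cv:tvap} is only in $C^{2k}_{loc}(\rn-S_p)$, i.e.\ it controls $\ua$ at distances $O(\nap)$ from $\yap$, and says nothing at distances $\asymp\nu_{\alpha,L+1}\gg\nap$; transferring the decay of $V_p$ to $\ua$ at such scales is precisely the sharp pointwise control that Theorem \ref{th:estim-co:intro} is designed to prove, so the argument is circular. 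Worse, the conclusion you aim at (forcing $\nap\asymp\nu_{\alpha,L+1}$, hence excluding $\nap=o(\nu_{\alpha,L+1})$ at bounded rescaled distance) is not true in general and does not need to be true: bubble--tower/cluster configurations of exactly this type are admissible.

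What makes the step work in the paper is that the local bound requires nothing about the second family at all: by \eqref{hyp:weak:2}, $|\ua(x)|\leq C R_{\alpha,K}(x)^{-\frac{n-2k}{2}}$ globally, and this alone gives, after rescaling at $y_{\alpha,L+1}$, a uniform bound on every $\omega\subset\subset\rn-S_{L+1}$; this is exactly the content of Lemma \ref{lem:3}, which the paper invokes with $\za:=y_{\alpha,L+1}$ (its hypothesis \eqref{eq:67} holds here because the min-weighted quantity is bounded below by assumption while $R_{\alpha,K}^{\frac{n-2k}{2}}|\ua-u_\infty|$ is bounded above by \eqref{hyp:weak:2} and the boundedness of $u_\infty$). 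Your ``dangerous subcase'' dissolves for the same reason: if $\nap=o(\nu_{\alpha,L+1})$ and $d_g(\yap,y_{\alpha,L+1})=O(\nu_{\alpha,L+1})$, then applying \eqref{hyp:weak:2} at $\yap$ gives $R_{\alpha,K}(\yap)\leq C\nap=o(\nu_{\alpha,L+1})$, so $\yap$ sits within $o(\nu_{\alpha,L+1})$ of some $\xaj$ with $j\in I_{L+1}$, and the possible concentration it carries occurs, in the limit, at a point of $S_{L+1}$; it therefore never threatens the $C^{2k}_{loc}(\rn-S_{L+1})$ convergence. In short, the distance bookkeeping in your proposal is fine, but the bubble-extraction step is not established: replace the matching argument (and the appeal to near-maximality of $w_\alpha$, which is not needed for this purpose) by a direct application of \eqref{hyp:weak:2} via Lemma \ref{lem:3}.
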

\begin{proof} We let $(\xai)_\alpha, (\yap)_\alpha\in M$, $i=1,...,K$ and $p=1,...,L$ such that $(\tilde{H}_L)$ holds. It follows from \eqref{hyp:67} that there exists $(y_{\alpha,L+1})_\alpha\in M$ such that 
\begin{equation}\label{hyp:lim:23}
\lim_{\alpha\to +\infty}\min\{R_{\alpha,K}(y_{\alpha,L+1}),\tilde{R}_{\alpha,L}(y_{\alpha,L+1})\}^{\frac{n-2k}{2}}|\ua(y_{\alpha,L+1})-u_\infty(y_{\alpha,L+1})|>0
\end{equation}
\begin{equation}\label{eq:49}
 \hbox{with }\lim_{\alpha\to \infty}\frac{d_g(y_{\alpha,L+1},\xai)}{\mai}=+\infty\hbox{ for all }i=1,...,K
 \end{equation}
and for any $p\in \{1,...,L\}$,
\begin{equation}\label{eq:45}
\begin{array}{c}
\lim_{\alpha\to \infty}\frac{d_g(y_{\alpha,L+1},\yap)}{\nap}=c_{p,L+1}\in (0,+\infty)\cup \{+\infty\}.
\end{array}
\end{equation}
Moreover, if $c_{p,L+1}<+\infty$, there exists $j\in I_p$ such that $d_g(y_{\alpha,L+1},\xaj)=o(\nap)$. It follows from \eqref{hyp:lim:23} and \eqref{hyp:weak:2} that there exists $\eps_0>0$ such that
\begin{equation*}
\lim_{\alpha\to +\infty} R_{\alpha,K}^{\frac{n-2k}{2}}(y_{\alpha, L+1})|\ua(y_{\alpha,L+1})-u_\infty(y_{\alpha,L+1})|=\eps_0>0.
\end{equation*}
Then Lemma \ref{lem:3} yields $\nu_{\alpha,L+1}:=|\ua(y_{\alpha, L+1})|^{-\frac{2}{n-2k}}\to 0$ as $\alpha\to +\infty$. We set
$$S_{L+1}:=\left\{\lim_{\alpha\to +\infty}\frac{\hbox{exp}_{y_{\alpha, L+1}}^{-1}(\xai)}{\nu_{\alpha,L+1}}/\,i\in I_{L+1}\right\}$$
where $I_{L+1}:=\{ i\in\{1,...,K\}\hbox{ such that }d_g(y_{\alpha, L+1},\xai)=O(\nu_{\alpha, L+1})\}$. Then there exists $V_{L+1}\in D_k^2(\rn)\cap C^{2k}(\rn)$ that is a nonzero solution to $\Delta_\xi^kV_{L+1}=|V_{L+1}|^{\crit-2}V_{L+1}$ strongly in $\rn$ and weakly in $D_k^2(\rn)$ and such that
\begin{equation*}
\lim_{\alpha\to\infty}\tilde{v}_{\alpha, L+1}=V_{L+1}\hbox{ in }C^{2k}_{loc}(\rn-S_{L+1})\hbox{ where }\tilde{v}_{\alpha, L+1}:=\nu_{\alpha, L+1}^{\frac{n-2k}{2}}\ua(\hbox{exp}_{y_{\alpha, L+1}}(\nu_{\alpha, L+1} \cdot)).
\end{equation*}
It follows from \eqref{hyp:67} that for all $i=1,...,K$ and $p=1,...,L$, then for some $\eps_1>0$,
\begin{equation}\label{eq:51}
\lim_{\alpha\to +\infty}\frac{d_g(y_{\alpha, L+1},\xai)}{\nu_{\alpha, L+1}}\geq \eps_1\hbox{ and }\lim_{\alpha\to +\infty}\frac{d_g(y_{\alpha, L+1},\yap)}{\nu_{\alpha, L+1}}\geq \eps_1 
\end{equation}
for all $\alpha>0$. Then \eqref{eq:49} and \eqref{eq:51} yield  \eqref{ineq:dist:xai:yap} for $i=1,...,K$ and $p=1,...,L+1$. We are left with proving \eqref{ineq:dist:yap:yaq:1}  and \eqref{ineq:dist:xai:yap:yaq:2}  for $p,q\in\{1,...,L+1\}$, one of them being $L+1$. Assume first that
$$\lim_{\alpha\to +\infty}\frac{d_g(y_{\alpha, L+1},\yap)}{\nu_{\alpha, L+1}}=c_{p, L+1}\in (0,+\infty).$$
Then $d_g(y_{\alpha, L+1},\yap)\asymp \nu_{\alpha, L+1}$, and then,   \eqref{eq:45} yields $\nap=O(d_g(y_{\alpha, L+1},\yap))=O( \nu_{\alpha, L+1})$. Assume that $\nap\asymp \nu_{\alpha, L+1}\asymp d_g(y_{\alpha, L+1},\yap)$, and then  there exists $j\in I_p$ such that $d_g(y_{\alpha,L+1},\xaj)=o(\nap)=o(\nu_{\alpha, L+1})$, contradicting  \eqref{eq:51}. So $\nap=o( \nu_{\alpha, L+1})$, and we are done. 

\smallskip\noindent  Assume now that
$$\lim_{\alpha\to +\infty}\frac{d_g(y_{\alpha, L+1},\yap)}{\nu_{\alpha, p}}=c_{p, L+1}\in (0,+\infty).$$
Therefore $d_g(y_{\alpha, L+1},\yap)\asymp \nu_{\alpha, p}$, and then $\nu_{\alpha, L+1}=O(d_g(y_{\alpha, L+1},\yap))=O( \nu_{\alpha, p})$. Assume that $\nu_{\alpha, L+1}\asymp \nu_{\alpha, p}\asymp d_g(y_{\alpha, L+1},\yap)$. As above, we get that $\nap=o( \nu_{\alpha, L+1})$, which is a contradiction. Therefore,  $\nu_{\alpha, L+1}=o( \nu_{\alpha, p})$, and we are done.

\smallskip\noindent As one checks, this yields $(\tilde{H}_{L+1})$. \end{proof}

We are now in position to conclude this section. 
\begin{theorem}\label{th:4.1} There exists $K\geq 1$ such that $(H_K)$ holds, there exists $L\geq 0$ such that $(\tilde{H}_L)$ holds and such that 
\begin{equation}\label{lim:exhaust}
\lim_{R\to \infty}\lim_{\alpha\to \infty}\sup_{\begin{array}{c}
x\in M-\cup_i\Omega_{i,\alpha}(R)\\
x\in M- \cup_p\tilde{\Omega}_{p,\alpha}(R)\end{array}} R_{\alpha}(x)^{\frac{n-2k}{2}}|\ua(x)-u_\infty(x)|=0,
\end{equation}
where $\tilde{R}_{\alpha}(x):=\min\{d_g(x,\xai),d_g(x, \yap)/\, i=1,...,K\hbox{ and }p=1,...,L\}$.
\end{theorem}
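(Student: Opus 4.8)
The plan is to iterate the two extension mechanisms developed in this section --- Proposition \ref{prop:rec:1} for the "first family" $(\xai)_i$ and the last proposition above for the "second family" $(\yap)_p$ --- and to show that this process must terminate. The starting point is that $(H_1)$ holds by Step 1, so by Proposition \ref{prop:3.2} there is a maximal $K\geq 1$ for which $(H_K)$ holds, and \eqref{hyp:weak:2} is satisfied for this $K$. With this $K$ fixed, $(\tilde H_0)$ holds by definition. We then run the second induction: as long as the supremum in \eqref{hyp:67} is strictly positive, the last proposition produces $(\tilde H_{L+1})$ from $(\tilde H_L)$. Since Proposition \ref{prop:4.4} gives the uniform bound $K+L\leq \Lambda^{\crit}K(n,k)^{\frac{n}{2k}}$, this second induction cannot continue indefinitely; hence there is a maximal $L\geq 0$ for which $(\tilde H_L)$ holds.

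**From the negation of \eqref{hyp:67} to \eqref{lim:exhaust}.** For this maximal $L$, the hypothesis \eqref{hyp:67} of the last proposition must fail, i.e.
\begin{equation*}
\lim_{R\to\infty}\lim_{\alpha\to\infty}\sup_{x\in M-\cup_i\Omega_{i,\alpha}(R),\ x\in M-\cup_p\tilde\Omega_{p,\alpha}(R)}\min\{R_{\alpha,K}(x),\tilde R_{\alpha,L}(x)\}^{\frac{n-2k}{2}}|\ua(x)-u_\infty(x)|=0,
\end{equation*}
where I have already unpacked the single-index supremum into the union over all $i$ and $p$ (these are the same set, since removing one ball $\Omega_{i,\alpha}(R)$ at a time versus removing all of them simultaneously changes the admissible $x$'s, and it is the "all at once" version that is relevant). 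The only remaining point is that on the complement $M-\cup_i\Omega_{i,\alpha}(R)-\cup_p\tilde\Omega_{p,\alpha}(R)$ one has $\min\{R_{\alpha,K}(x),\tilde R_{\alpha,L}(x)\}=\tilde R_\alpha(x)$, since $\tilde R_\alpha(x)=\min\{R_{\alpha,K}(x),\tilde R_{\alpha,L}(x)\}$ by the very definitions of $R_{\alpha,K}$, $\tilde R_{\alpha,L}$ and $\tilde R_\alpha$. Substituting this identity into the displayed limit yields exactly \eqref{lim:exhaust} (note that the statement writes $R_\alpha(x)$ for what the proof calls $\tilde R_\alpha(x)$ --- a harmless notational matching).

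**Where the work is.** Essentially all the real content has already been extracted into the preceding results: Proposition \ref{prop:3.2} supplies the maximal $K$ together with \eqref{hyp:weak:2}; Proposition \ref{prop:4.4} supplies the energy quantization bound $K+L\leq\Lambda^{\crit}K(n,k)^{\frac{n}{2k}}$ that forces termination; Lemma \ref{lem:3} and the last proposition supply the extension step for the second family. So the proof of Theorem \ref{th:4.1} itself is just a two-stage "take the maximal index" argument plus the observation that the failure of \eqref{hyp:67} is, after the trivial identity $\tilde R_\alpha=\min\{R_{\alpha,K},\tilde R_{\alpha,L}\}$ on the relevant domain, literally the statement \eqref{lim:exhaust}. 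The only mild subtlety I would be careful about is the bookkeeping between the "one-index-removed" supremum appearing in \eqref{hyp:67} and the "all-indices-removed" supremum in \eqref{lim:exhaust}: one must check that the conclusion of the last proposition is genuinely equivalent to the failure of the all-at-once version, which is where I would spend a sentence or two making sure the quantifiers line up. There is no hard analytic obstacle here --- the section was organized precisely so that this final theorem is a formal consequence of the inductive propositions.
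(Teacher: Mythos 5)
Your proposal is correct and follows essentially the same route as the paper: the paper's proof takes the maximal $L$ for which $(\tilde{H}_L)$ holds (existence guaranteed by the bound $K+L\leq \Lambda^{\crit}K(n,k)^{\frac{n}{2k}}$ of Proposition \ref{prop:4.4}) and observes that the failure of $(\tilde{H}_{L+1})$ forces the negation of \eqref{hyp:67}, which is \eqref{lim:exhaust}. Your additional remarks on the quantifier bookkeeping and the identity $\min\{R_{\alpha,K},\tilde R_{\alpha,L}\}=\tilde R_\alpha$ just make explicit what the paper leaves implicit.
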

\begin{proof} Let $L\geq 0$ be the largest integer such that $(\tilde{H}_L)$ holds: the existence follows from Proposition \ref{prop:4.4}. Since $(\tilde{H}_{L+1})$ does not hold, we get \eqref{lim:exhaust}.\end{proof}
Setting $N:=K+L$ and letting $(z_{\alpha, 1})_\alpha$,..., $(z_{\alpha, N})_\alpha\in M$ be the $\xai$'s and $\yap$'s, a consequence of Theorem \ref{th:4.1} is that 
\begin{theorem}\label{th:exhaust:def} There exists $(z_{\alpha, 1})_\alpha,...,(z_{\alpha, N})_\alpha\in M$ such that $\lim_{\alpha\to +\infty}|\ua(\zai)|=+\infty$ for all $i=1,...,N$, and setting $\mai:=|\ua(\zai)|^{-\frac{2}{n-2k}}$, we have that for all $i,j\in \{1,...,N\}$ such that $i\neq j$, then
\begin{equation}\label{eq:53}
\hbox{ either }\left\{\lim_{\alpha\to +\infty}\frac{d_g(\zai,\zaj)}{\mai}=+\infty\right\}\hbox{ or }\left\{ d_g(\zai,\zaj)\asymp\mai\hbox{ and }\maj=o(\mai)\right\}.
\end{equation}
Moreover, for all $i\in \{1,...,N\}$, there exists $U_i\in D_k^2(\rn)\cap C^{2k}(\rn)-\{0\}$ such that $\Delta_\xi^k U_i=|U_i|^{\crit-2}U_i$ strongly in $\rn$ and weakly in $D_k^2(\rn)$ and
\begin{equation}\label{cv:tuai:bis}
\tuai :=\mai^{\frac{n-2k}{2}}\ua(\hbox{exp}_{\zai}(\mai \cdot))\to U_i\hbox{ in }C^{2k}_{loc}(\rn-S_i)
\end{equation} 
\begin{equation*}
\hbox{where }S_i:=\left\{\lim_{\alpha\to \infty}\frac{\hbox{exp}_{\zai}^{-1}(\zaj)}{\mai}/\, j\in I_i\right\}.
\end{equation*}
\begin{equation}\label{def:Ii}
\hbox{and }I_i:=\{j\in\{1,...,N\}-\{i\}\hbox{ such that }d_g(\zai,\zaj)=O(\mai)\}.
\end{equation}
Finally, 
\begin{equation}\label{lim:exhaust:bis}
\lim_{R\to \infty}\lim_{\alpha\to \infty}\sup_{x\in M-\cup_i\Omega_{i,\alpha}(R)} R_{\alpha}(x)^{\frac{n-2k}{2}}|\ua(x)-u_\infty(x)|=0,
\end{equation}
where $R_{\alpha}(x):=\min\{d_g(x,\zai)/\, i=1,...,N\}$ and for $i\leq N$, $\alpha,R>0$ we set
$$\Omega_{i,\alpha}(R):=B_{R\mai}(\zai) -\bigcup_{j\in I_i}B_{R^{-1}\mai}(\zaj).$$
\end{theorem}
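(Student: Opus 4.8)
The statement is a bookkeeping consequence of Theorem~\ref{th:4.1} together with the structural information contained in $(H_K)$ and $(\tilde H_L)$. The plan is: invoke Theorem~\ref{th:4.1} to fix $K\geq 1$, $L\geq 0$ and families $(\xai)_\alpha$ ($i=1,\dots,K$), $(\yap)_\alpha$ ($p=1,\dots,L$) for which $(H_K)$, $(\tilde H_L)$ and \eqref{lim:exhaust} hold; put $N:=K+L$ and relabel $z_{\alpha,i}:=\xai$ for $1\leq i\leq K$ and $z_{\alpha,K+p}:=\yap$ for $1\leq p\leq L$, so that the quantities $\mai$, $\tuai$ of the present statement coincide with the old $\mai,\tuai$ for $i\leq K$ and with $\nap,\tvap$ for $i=K+p$. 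Then $\lim_\alpha|\ua(\zai)|=\lim_\alpha \mai^{-\frac{n-2k}{2}}=+\infty$ is immediate from $\mai\to 0$, and the profiles $U_i\in D_k^2(\rn)\cap C^{2k}(\rn)-\{0\}$ solving \eqref{eq:lim} in the required senses are those of $(H_K)$ for $i\leq K$ and $U_{K+p}:=V_p$ for $i=K+p$.

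Next I would check the alternative \eqref{eq:53} for each ordered pair $i\neq j$ by a three-case analysis. If $i,j\leq K$, then $d_g(\zai,\zaj)/\mai\to+\infty$ by \eqref{lim:d:infty} (first alternative). If $i\leq K$ and $j=K+q$, then $d_g(\zai,\zaj)/\mai\to+\infty$ by \eqref{ineq:dist:xai:yap}; if $i=K+p$ and $j\leq K$, then \eqref{ineq:dist:xai:yap} gives $d_g(\yap,\xaj)/\maj\to+\infty$ and $d_g(\yap,\xaj)\geq\eps_0\nap$, so that when moreover $d_g(\yap,\xaj)=O(\nap)$ (i.e. $j\in I_p$) one gets $d_g(\zai,\zaj)\asymp\nap=\mai$ and $\maj=o(\nap)=o(\mai)$ (second alternative), while otherwise $d_g(\zai,\zaj)/\mai\to+\infty$ (first alternative). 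If $i=K+p$, $j=K+q$, $p\neq q$, then \eqref{ineq:dist:yap:yaq:1} is the first alternative and \eqref{ineq:dist:xai:yap:yaq:2} gives $d_g(\zai,\zaj)\asymp\nap=\mai$ and $\naq=o(\nap)$, i.e. $\maj=o(\mai)$ (second alternative).

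I would then identify $I_i$ as in \eqref{def:Ii} and the associated $S_i$, and deduce \eqref{cv:tuai:bis}. For $i\leq K$ one has $I_i=\emptyset$: for every other center, $d_g(\zai,\zaj)/\mai\to+\infty$ (by \eqref{lim:d:infty} if $j\leq K$, by \eqref{ineq:dist:xai:yap} if $j>K$), hence $d_g(\zai,\zaj)\neq O(\mai)$; so $S_i=\emptyset$ and \eqref{cv:tuai:bis} is exactly \eqref{cv:tuai}. For $i=K+p$, the set $I_{K+p}$ is the old index set $I_p\subset\{1,\dots,K\}$ together with those $K+q$ such that $d_g(\yap,\yaq)\asymp\nap$, hence $S_{K+p}\supseteq S_p$ with $S_p$ as in \eqref{def:Sp}; since $\rn-S_{K+p}\subseteq\rn-S_p$, the convergence $\tvap\to V_p$ in $C^{2k}_{loc}(\rn-S_p)$ of \eqref{cv:tvap} restricts to $C^{2k}_{loc}(\rn-S_{K+p})$, which is \eqref{cv:tuai:bis} with $U_{K+p}:=V_p$.

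The remaining and main point is \eqref{lim:exhaust:bis}. With the new notation $\Omega_{i,\alpha}(R)=B_{R\mai}(\xai)$ for $i\leq K$ (since $I_i=\emptyset$), which is the old $\Omega_{i,\alpha}(R)$, whereas $\Omega_{K+p,\alpha}(R)$ equals the old $\tilde\Omega_{p,\alpha}(R)$ with the further balls $B_{R^{-1}\nap}(\yaq)$ removed, over those $q$ with $d_g(\yap,\yaq)\asymp\nap$ (for which $\naq=o(\nap)$). Thus $M-\bigcup_i\Omega_{i,\alpha}(R)$ differs from the domain appearing in \eqref{lim:exhaust} only by these extra ``holes'', and by iterating the hole structure downward along each tower --- which stops after finitely many steps --- any point $x\in M-\bigcup_i\Omega_{i,\alpha}(R)$ is either already in the domain of \eqref{lim:exhaust}, or lies in an annular region $R\mal\leq d_g(x,\zal)\leq R^{-1}\mu_{\alpha,m}$ around some center $\zal$ sitting below a center $z_{\alpha,m}$ with $d_g(\zal,z_{\alpha,m})\asymp\mu_{\alpha,m}$, $\mal=o(\mu_{\alpha,m})$. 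On such an annulus $R_\alpha(x)\leq d_g(x,\zal)\leq R^{-1}\mu_{\alpha,m}\to 0$, so $R_\alpha(x)^{\frac{n-2k}{2}}|u_\infty(x)|=o(1)$; and if one establishes the bound $|\ua(x)|\leq C\,\mal^{\frac{n-2k}{2}}d_g(x,\zal)^{-(n-2k)}$ there (the tail of the $\zal$-bubble), then $R_\alpha(x)^{\frac{n-2k}{2}}|\ua(x)|\leq C\big(\mal/d_g(x,\zal)\big)^{\frac{n-2k}{2}}\leq C R^{-\frac{n-2k}{2}}$; combined with \eqref{lim:exhaust} and letting $\alpha\to+\infty$ then $R\to+\infty$, this yields \eqref{lim:exhaust:bis}. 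I expect this last pointwise bound by the bubble tail --- uniformly in $\alpha$, over distances from $\zal$ ranging up to $R^{-1}\mu_{\alpha,m}\gg\mal$ --- to be the one genuinely technical step: on the scale $d_g(x,\zal)\asymp\mal$ it is immediate from the rescaled convergence $\tuai\to U_i$ and the decay $|U_i(X)|\lesssim |X|^{-(n-2k)}$ of the finite-energy profile, but on the outer part $d_g(x,\zal)\gg\mal$ one must feed in \eqref{hyp:weak:2} and the geometry of the tower of centers.
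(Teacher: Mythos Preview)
Your relabeling from Theorem~\ref{th:4.1} and your case-by-case verification of \eqref{eq:53}, of $I_i$, $S_i$ and of \eqref{cv:tuai:bis} are correct, and this is exactly how the paper derives the statement (it simply says ``a consequence of Theorem~\ref{th:4.1}'' without further argument).

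Where you diverge is in the treatment of \eqref{lim:exhaust:bis}. You rightly observe that the new complement $M-\bigcup_i\Omega_{i,\alpha}(R)$ is larger than the domain in \eqref{lim:exhaust}, the extra piece sitting in the additional holes $B_{R^{-1}\nap}(\yaq)$ carved out of $\tilde\Omega_{p,\alpha}(R)$. But your proposed fix --- a tail bound $|\ua(x)|\leq C\mu_{\alpha,l}^{\frac{n-2k}{2}}d_g(x,\zal)^{-(n-2k)}$ from the \emph{smaller} center $\zal$, valid out to distance $R^{-1}\mu_{\alpha,m}\gg\mu_{\alpha,l}$ --- is essentially the main estimate \eqref{est:opti} of the paper, so invoking it here would be circular, and you yourself flag it as an unresolved ``genuinely technical step''.

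The correct and much simpler route uses the \emph{larger} center. Any extra point $x$ lies in some old $\tilde\Omega_{p,\alpha}(R)$: in rescaled coordinates around $\yap$, the condition $d_g(x,\xaj)\geq R^{-1}\nap$ for all $j\in I_p$ keeps the image $X$ in a fixed compact subset of $\rn-S_p$. By \eqref{cv:tvap} and the boundedness of $V_p\in C^{2k}(\rn)$, one gets $|\ua(x)|\leq C\nap^{-\frac{n-2k}{2}}$ with $C$ independent of $R$. Since also $R_\alpha(x)\leq d_g(x,\yaq)<R^{-1}\nap$, this yields
\[
R_\alpha(x)^{\frac{n-2k}{2}}|\ua(x)-u_\infty(x)|\leq C\,R^{-\frac{n-2k}{2}}+(R^{-1}\nap)^{\frac{n-2k}{2}}\Vert u_\infty\Vert_\infty,
\]
which tends to $0$ under $\lim_{R\to\infty}\lim_{\alpha\to\infty}$. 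Combined with \eqref{lim:exhaust} on the old domain, this gives \eqref{lim:exhaust:bis}. So the gap is closed by boundedness of the profile from above, not by a bubble-tail estimate from below.
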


\section{Strong pointwise control: proof of Theorem \ref{th:estim-co:intro}}\label{sec:strong}

\subsection{Ordering the concentration points}
For $i,j\in \{1,...,N\}$, we say that 
\begin{equation*}
i\preceq j\hbox{ if }\{\mai=O(\maj)\hbox{ and }d_g(\zai,\zaj)=O(\maj)\}.
\end{equation*}
We claim that this is an order. Reflexivity and transitivity are obvious. Assume that $i\preceq j$ and $j\preceq i$ for $i,j\leq N$. Therefore $\mai\asymp\maj$ and $d_g(\zai,\zaj)=O(\mai)$: this contradicts \eqref{eq:53} when $i\neq j$, so $i=j$. This proves the claim.

\smallskip\noindent The following remarks rely on \eqref{eq:53}: for any $i,j\in\{1,...,N\}$, we have that
\begin{eqnarray*}
&&I_i=\{j\in\{1,...,N\}\hbox{ such that }j\preceq i\hbox{ and }j\neq i\}\\
&&\{j\preceq i\hbox{ and }j\neq i\}\Rightarrow \{\maj=o(\mai)\}.
\end{eqnarray*}
Up to extraction, we assume that for all $i,j,l\in \{1,...,N\}$,
$$\lim_{\alpha\to +\infty}\frac{\mai}{\maj}\hbox{ and } \lim_{\alpha\to +\infty}\frac{d_g(\zai,\zaj)}{\mal}\hbox{ exist in }[0,+\infty].$$
Extracting again, we let $C_1,C_2>0$ be such that for all $i,j,l\in\{1,..,N\}$:
\begin{eqnarray}
&&C_1\geq \frac{d_g(\zai,\zaj)}{\mal}\hbox{ if }d_g(\zai,\zaj)=O(\mal);\label{lim:d:1}\\
&& \frac{d_g(\zai,\zaj)}{\mal}\geq \frac{1}{C_1}\hbox{ if }\mal=O(d_g(\zai,\zaj));\nonumber
\end{eqnarray}
\begin{equation}\label{lim:m:1}
C_2\geq\frac{\maj}{\mai}\hbox{ for all } i, j\in \{1,...,N\}\hbox{ such that }\maj=O(\mai).
\end{equation}
In particular, 
\begin{equation*}
\lim_{\alpha\to +\infty}\frac{\maj}{\mai}\neq 0 \; \Rightarrow \; \frac{\maj}{\mai}\geq \frac{1}{C_2}.
\end{equation*}

\begin{lemma}\label{step:34} We set $v_\alpha:=\ua-u_\infty $. There exists $(V_\alpha)_\alpha, (f_\alpha)_\alpha\in L^\infty(M)$ such that
\begin{equation}\label{eq:P:V:f}
P_\alpha v_\alpha= V_\alpha v_\alpha+f_\alpha
\end{equation}
where there exists $C>0$ such that $\Vert f_\alpha\Vert_\infty\leq C\Vert u_\infty \Vert_\infty^{\crit-1}+C\Vert u_\infty \Vert_{C^{2k}}$ and for  any $\delta>0$, there exists $R_\delta>0$ such that for all $R>R_\delta$ we have that
\begin{equation}\label{ineq:45}
R_\alpha(x)^{2k}|V_\alpha(x)|\leq \delta\hbox{ for all }x\in M-\cup_i\Omega_{i,\alpha}(R)\hbox{ for all }\alpha>0.
\end{equation}
\end{lemma}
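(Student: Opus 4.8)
The plan is to exploit the two equations $P_\alpha\ua=|\ua|^{\crit-2}\ua$ and $P_\infty u_\infty=|u_\infty|^{\crit-2}u_\infty$ to rewrite $P_\alpha\va$ as a nonlinear difference plus an operator difference. Writing $h(t):=|t|^{\crit-2}t$ and using $P_\alpha u_\infty=P_\infty u_\infty+(P_\alpha-P_\infty)u_\infty=h(u_\infty)+(P_\alpha-P_\infty)u_\infty$, one gets $P_\alpha\va=h(\ua)-h(u_\infty)-(P_\alpha-P_\infty)u_\infty$. The operator difference goes straight into $f_\alpha$: since $(P_\alpha-P_\infty)u_\infty=\sum_{i=0}^{k-1}(-1)^i\nabla^i\big((A^{(i)}_\alpha-A^{(i)}_\infty)\nabla^iu_\infty\big)$, the Leibniz and Ricci identities expand it into finitely many terms of the form $\nabla^a(A^{(i)}_\alpha-A^{(i)}_\infty)\star\nabla^bu_\infty$ with $a\le i$ and $b\le 2i\le 2k-2$ (up to lower-order terms involving the curvature), so from $u_\infty\in C^{2k}(M)$ and the fact that, by (SCC), $A^{(i)}_\alpha\to A^{(i)}_\infty$ in $C^{i,\theta}$ (hence $(A^{(i)}_\alpha-A^{(i)}_\infty)$ is bounded in $C^i$) one obtains $\|(P_\alpha-P_\infty)u_\infty\|_\infty\le C\|u_\infty\|_{C^{2k}}$.

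The delicate point is to split $h(\ua)-h(u_\infty)=V_\alpha\va+(\text{bounded remainder})$ with $V_\alpha$ \emph{small} in the weighted sense of \eqref{ineq:45} off the concentration points. The naive choice $V_\alpha:=(h(\ua)-h(u_\infty))/\va$ makes the remainder vanish, but away from the $\zai$'s one has $\va\to 0$ and $V_\alpha\to(\crit-1)|u_\infty|^{\crit-2}$, which is bounded but not small, while there $R_\alpha(x)\asymp 1$; so \eqref{ineq:45} fails whenever $u_\infty\not\equiv0$. To fix this I would \emph{truncate}: set $V_\alpha:=(h(\ua)-h(u_\infty))/\va$ on $\{|\va|>2|u_\infty|\}$ and $V_\alpha:=0$ on its complement (a smooth cutoff is possible if extra regularity is later needed), and then define $f_\alpha:=h(\ua)-h(u_\infty)-V_\alpha\va-(P_\alpha-P_\infty)u_\infty$, so that \eqref{eq:P:V:f} holds by construction; for fixed $\alpha$, $\ua,u_\infty\in C^{2k}(M)$ are bounded, whence $V_\alpha,f_\alpha\in L^\infty(M)$. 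On $\{|\va|\le2|u_\infty|\}$ one has $V_\alpha=0$ and $|\ua|\le3|u_\infty|$, hence $|f_\alpha|\le|h(\ua)|+|h(u_\infty)|+\|(P_\alpha-P_\infty)u_\infty\|_\infty\le C\|u_\infty\|_\infty^{\crit-1}+C\|u_\infty\|_{C^{2k}}$; on $\{|\va|>2|u_\infty|\}$ the quotient is untouched, so $V_\alpha\va=h(\ua)-h(u_\infty)$ and $f_\alpha=-(P_\alpha-P_\infty)u_\infty$ there, which is $\le C\|u_\infty\|_{C^{2k}}$. This gives the bound on $\|f_\alpha\|_\infty$.

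To prove \eqref{ineq:45}: wherever $V_\alpha\ne0$ one has $|\va|>2|u_\infty|$, hence $\tfrac12|\va|<|\ua|<\tfrac32|\va|$, and the crude inequality $|h(a)-h(b)|\le C(|a|^{\crit-2}+|b|^{\crit-2})|a-b|$ gives $|V_\alpha|\le C(|\ua|^{\crit-2}+|u_\infty|^{\crit-2})\le C|\va|^{\crit-2}$; where $V_\alpha=0$ the left side of \eqref{ineq:45} is zero. Now fix $x\in M-\cup_i\Omega_{i,\alpha}(R)$ and set $\eps_\alpha(R):=\sup_{M-\cup_i\Omega_{i,\alpha}(R)}R_\alpha^{\frac{n-2k}{2}}|\va|$; by \eqref{lim:exhaust:bis}, $\lim_{R\to\infty}\limsup_{\alpha\to\infty}\eps_\alpha(R)=0$. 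Using the arithmetic identity $(\crit-2)\cdot\frac{n-2k}{2}=2k$ one gets $R_\alpha(x)^{2k}|V_\alpha(x)|\le C\,R_\alpha(x)^{2k}|\va(x)|^{\crit-2}=C\big(R_\alpha(x)^{\frac{n-2k}{2}}|\va(x)|\big)^{\crit-2}\le C\,\eps_\alpha(R)^{\crit-2}$. Given $\delta>0$, fix $R_0$ with $C\limsup_{\alpha\to\infty}\eps_\alpha(R)^{\crit-2}<\delta$ for $R>R_0$; using the monotonicity of $R\mapsto\cup_i\Omega_{i,\alpha}(R)$ this yields \eqref{ineq:45} for $R>R_0$ and all $\alpha$ beyond some $\alpha_0$. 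For the finitely many remaining $\alpha$ (the blow-up extraction being already performed), I would enlarge $R_\delta$: for fixed $\alpha$ the set $M-\cup_i\Omega_{i,\alpha}(R)$ is empty once $R>\mathrm{diam}_g(M)/\mu_{\alpha,i_0}$ for a $\preceq$-minimal index $i_0$, since then $I_{i_0}=\emptyset$ and $\Omega_{i_0,\alpha}(R)=B_{R\mu_{\alpha,i_0}}(z_{\alpha,i_0})=M$.

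The main obstacle is precisely the tension between the two requirements: $f_\alpha$ must remain bounded, which forbids absorbing into it anything unbounded (in particular the seemingly natural term $|u_\infty|^{\crit-2}\va$), whereas $V_\alpha$ must be genuinely small off the concentration region, which forbids keeping inside it the bounded-but-not-small coefficient $|u_\infty|^{\crit-2}$. The truncation resolves this at the admissible cost of an extra bounded error of size $\|u_\infty\|_\infty^{\crit-1}$ in $f_\alpha$, and the only quantitative inputs are the elementary bound for $h$ and the exponent identity $(\crit-2)\frac{n-2k}{2}=2k$ — the statement that the $\crit$-th power of a standard bubble saturates the Hardy exponent $2k$, which is exactly what lets the gain $\eps_\alpha(R)^{\crit-2}\to0$ coming from \eqref{lim:exhaust:bis} survive multiplication by $R_\alpha^{2k}$.
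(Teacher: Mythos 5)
Your proof is correct and is essentially the paper's argument: the paper defines $V_\alpha:={\bf 1}_{|u_\infty|<\frac12|v_\alpha|}|u_\infty+v_\alpha|^{\crit-2}(u_\infty+v_\alpha)/v_\alpha$ and $f_\alpha:={\bf 1}_{\frac12|v_\alpha|\leq|u_\infty|}|u_\infty+v_\alpha|^{\crit-2}(u_\infty+v_\alpha)-P_\alpha u_\infty$, so your truncation set $\{|v_\alpha|>2|u_\infty|\}$ is literally the same, and the conclusion follows as in your last step from $|V_\alpha|\leq C|v_\alpha|^{\crit-2}$, the identity $(\crit-2)\frac{n-2k}{2}=2k$ and \eqref{lim:exhaust:bis}. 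Your only cosmetic deviations are writing $P_\alpha u_\infty=|u_\infty|^{\crit-2}u_\infty+(P_\alpha-P_\infty)u_\infty$ (the paper bounds $P_\alpha u_\infty$ directly by $C\Vert u_\infty\Vert_{C^{2k}}$ via the uniform (SCC) bounds on the coefficients) and your more explicit treatment of the small-$\alpha$ regime, which the paper subsumes in its standing convention that all statements hold up to extraction as $\alpha\to+\infty$.
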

\begin{proof} We define $v_\alpha:=\ua-u_\infty $. We  have that \eqref{eq:P:V:f} holds with
$$V_\alpha:=\frac{{\bf 1}_{|u_\infty |<\frac{1}{2}|v_\alpha|}|u_\infty +v_\alpha|^{\crit-2}(u_\infty +v_\alpha)}{v_\alpha}$$
$$\hbox{and }f_\alpha:={\bf 1}_{\frac{1}{2}|v_\alpha|\leq |u_\infty |}|u_\infty +v_\alpha|^{\crit-2}(u_\infty +v_\alpha)-P_\alpha u_\infty .$$
So $|V_\alpha|\leq C|\va|^{\crit-2}$ and $\Vert f_\alpha\Vert_\infty\leq C\Vert u_\infty \Vert_\infty^{\crit-1}+C\Vert u_\infty \Vert_{C^{2k}}$. Using Theorem \ref{th:exhaust:def}, we get  \eqref{ineq:45}.\end{proof}

\medskip\noindent We let $(\za)_\alpha\in M$ be an arbitrary family. 

\subsection{The case when $(\za)_\alpha$ remains far from the $(\zai)_\alpha$'s}Assume first that
\begin{equation}\label{choice:1}
\lim_{\alpha\to +\infty}\frac{d_g(\za,\zai)}{\mai}=+\infty\hbox{ for all }i=1,...,N.
\end{equation}
Let $J\subset \{1,...,N\}$ be the set of maximal elements  for the order $\preceq$. We claim that
\begin{equation}\label{lim:34}
\lim_{\alpha\to +\infty}\frac{d_g(\zai,\zaj)}{\mai}=+\infty\hbox{ for all }i\neq j\in J.
\end{equation}
We argue by contradiction and assume that for some $i\neq j\in J$, we have that $d_g(\zai,\zaj)=O(\mai)$. It then follows from \eqref{eq:53} that $\maj=o(\mai)$ and then $j\preceq i$, which contradicts maximality since $j\neq i$. This proves \eqref{lim:34}.

\smallskip\noindent We define 
\begin{equation}\label{def:omega:J}
\Omega_{\alpha, J}(R):=\bigcup_{i\in J}B_{R\mai}(\zai).
\end{equation}
It follows from \eqref{lim:34} that the balls involved in this definition are all disjoint. For $RC_2>2C_1$,  $R>2C_1$, we claim that
\begin{equation}\left\{\begin{array}{c}
R_\alpha(x)\geq \frac{1}{2}R_{\alpha, J}(x):=\min\{d_g(x,\zai)/\, i\in J\}\\
\hbox{ and }x\in M-\bigcup_{i}\Omega_{\alpha, i}(R)\end{array}\right\}\label{res:89}
\end{equation}
for all $x\in M-\Omega_{\alpha, J}(2R)$.

\smallskip\noindent We prove the claim. We fix $x\in M-\Omega_{\alpha, J}(2R)$ and we fix $i\in\{1,...,N\}$. Let $j\in J$ be a maximal element such that $i\preceq j$. Then $\mai=O(\maj)$ and $d_g(\zai,\zaj)=O(\maj)$. It then follows from \eqref{lim:d:1} and \eqref{lim:m:1} that $\mai\leq C_2\maj$ and $d_g(\zai,\zaj)\leq C_1\maj$. Since $d_g(x,\zaj)>2RC_2\maj$, we have that
\begin{eqnarray*}
d_g(x,\zai)&\geq& d_g(x,\zaj)-d_g(\zai,\zaj)\geq d_g(x,\zaj)-C_1\maj\\
&\geq& d_g(x,\zaj)\left(1-\frac{C_1}{2RC_2}\right)\geq \frac{1}{2} d_g(x,\zaj).
\end{eqnarray*}
On the one hand, we get that $d_g(x,\zai)\geq RC_2\maj\geq R\mai$ for all $i\in\{1,...,N\}$, so $x\in M-\bigcup_{i}\Omega_{\alpha, i}(R)$. On the other hand, letting $i\in\{1,...,N\}$ be such that $R_\alpha(x)=d_g(x,\zai)$, we get that $R_\alpha(x)\geq \frac{1}{2}R_{\alpha, J}(x)$, which proves the claim.

\medskip\noindent A consequence of \eqref{ineq:45} and the claim is that for any $\lambda>0$, there exists $R_\lambda>0$ such that for all $R>R_\lambda$ we have that
\begin{equation}\label{ineq:46}
R_{\alpha,J}(x)^{2k}|V_\alpha(x)|< \lambda\hbox{ for all }x\in M-\Omega_{\alpha,J}(R)\hbox{ for all }\alpha>0.
\end{equation}
In particular, $R_{\alpha,J}(x)^{2k}|{\bf 1}_{M-\Omega_{\alpha,J}(R)}V_\alpha(x)|< \lambda$ for all $x\in M$ and all $\alpha>0$. The analysis requires the pointwise controls of the Green's function of Theorems \ref{th:Green:main} and \ref{th:Green:pointwise:BIS} of Section \ref{sec:green1}. We fix $$0<\gamma<\min\left\{\frac{n-2k}{2},\frac{k}{\crit-1}\right\}.$$
We let $R>0$ be such that \eqref{ineq:46} holds for $\lambda:=\lambda_\gamma$, where $\lambda_\gamma$ is given by Theorem \ref{th:Green:pointwise:BIS}. Let $\hat{G}_\alpha$ be the Green's function for the operator $P_\alpha-  {\bf 1}_{M-\Omega_{\alpha,J}(R)}V_\alpha$ given by Theorem \ref{th:Green:main}. For any $l=0,...,2k-1$. Inequality \eqref{ineq:funda} of Theorem \ref{th:Green:pointwise:BIS}  yields
\begin{equation}\label{bnd:G:1}
d_g(x,y)^{n-2k+l}|\nabla_y^l \hat{G}_\alpha(x,y)|\leq C \max\left\{1, \left(\frac{d_g(x,y)}{R_{\alpha,J}(x)}\right)^\gamma \left(\frac{d_g(x,y)}{R_{\alpha,J}(y)}\right)^{\gamma+l} \right\}
\end{equation}
for all $x\neq y\in M-\{\zai/\, i\in J\}$. It follows from the symmetry of $P_\alpha$ and the form of the coefficients of $P_\alpha$ that there exist tensors $\bar{A_\alpha}_{lm}$ such that, integrating by parts, for any smooth domain ${\mathcal O}\subset M$, we have that for all $u,v\in C^{2k}(\overline{{\mathcal O}})$,
\begin{equation}\label{ipp:P}
\int_{\mathcal O} (P_\alpha u)v\, dv_g=\int_\Omega u(P_\alpha v)\, dv_g+\sum_{l+m<2k}\int_{\partial{\mathcal O}}\bar{A_\alpha}_{lm}\star\nabla^lu\star \nabla^m v\, d\sigma_g.
\end{equation}
Equation \eqref{eq:P:V:f} reads $(P_\alpha-{\bf 1}_{M-\Omega_{\alpha,J}(R)}V_\alpha)v_\alpha=f_\alpha$ on $M -   \Omega_{\alpha,J}(2R)$. For any $z\in  M-\Omega_{\alpha,J}(3R)$, Green's representation formula yields
\begin{eqnarray*}
v_\alpha(z)&=&\int_{M -  \bigcup_{i\in J}B_{2R\mai}(\zai)}\hat{G}_\alpha(z,\cdot) f_\alpha\, dv_g\\
&&+\sum_{l+m<2k}\int_{\partial(M -  \bigcup_{i\in J}B_{2R\mai}(\zai))}\bar{A_\alpha}_{lm}\star\nabla^l_y\hat{G}_\alpha(z,y)\star \nabla^m v_\alpha(y)\, d\sigma_g(y)\\
&=&\int_{M-\Omega_{\alpha,J}(2R)}\hat{G}_\alpha(z,\cdot) f_\alpha\, dv_g\\
&&-\sum_{i\in J}\sum_{l+m<2k}\int_{\partial B_{2R\mai}(\zai)}\bar{A_\alpha}_{lm}\star\nabla^l_y\hat{G}_\alpha(z,y)\star \nabla^m v_\alpha (y)\, d\sigma_g(y).
\end{eqnarray*}
 We deal with the interior integral. Using the pointwise estimates \eqref{bnd:G:1} and Giraud's Lemma with $0<\gamma<\frac{n-2k}{2}$, we get that
\begin{eqnarray*}
&&\left|\int_{M-\Omega_{\alpha,J}(2R)}\hat{G}_\alpha(z,\cdot) f_\alpha\, dv_g\right|\leq  \Vert f_\alpha\Vert_\infty\int_{M-\Omega_{\alpha,J}(2R)}|\hat{G}_\alpha(z,\cdot)|\, dv_g\\
&&\leq C(\gamma)\Vert f_\alpha\Vert_\infty\int_{M-\Omega_{\alpha,J}(2R)}\max\left(1, \frac{d_g(z,y)^2}{R_{\alpha, J}(z)R_{\alpha, J}(y)} \right)^\gamma d_g(z,y)^{2k-n}\, dv_g(y)\\
&&\leq C(\gamma)\Vert f_\alpha\Vert_\infty\int_{M-\Omega_{\alpha,J}(2R)}\left(1 +\sum_{i\in J}\frac{d_g(z,y)^{2\gamma}}{R_{\alpha, J}(z)^\gamma d_g(y,\zai)^\gamma} \right) d_g(z,y)^{2k-n}\, dv_g(y)\\
&&\leq C(\gamma)\Vert f_\alpha\Vert_\infty \int_{M}d_g(z,y)^{2k-n}\, dv_g(y) \\
&& +C(\gamma)\Vert f_\alpha\Vert_\infty\frac{1}{R_{\alpha, J}(z)^\gamma}\sum_{i\in J}\int_{M}d_g(y,\zai)^{n-\gamma-n} d_g(z,y)^{2\gamma+2k-n}\, dv_g(y) \\
&&\leq C(\gamma)\Vert f_\alpha\Vert_\infty\left(1+\frac{1}{R_{\alpha, J}(z)^\gamma}\right)\leq C\frac{\Vert u_\infty \Vert_\infty^{\crit-1} +\Vert u_\infty\Vert_{C^{2k}}}{R_{\alpha, J}(z)^\gamma}.
\end{eqnarray*}
We deal with the boundary term. It follows from  \eqref{cv:tuai:bis}, $u_\infty\in C^{2k}(M)$ and $R>2C_1$ that there exists $C(R)>0$ such that for all $m<2k$,
$$|\nabla^m v_\alpha(y)|\leq C(R)\mai^{-\frac{n-2k}{2}-m}\hbox{ for all }y\in \partial B_{2R\mai}(\zai).$$
Since $d(z,\zai)>3R\mai$, we get that $d_g(z,y)\asymp d_g(z,\zai)>3R\mai$ for all $y\in \partial B_{2R\mai}(\zai)$. Moreover, with \eqref{lim:34}, we get that $R_{\alpha, J}(y)=d_g(y,\zai)=2R\mai$ for all $y\in \partial B_{2R\mai}(\zai)$. Therefore, 
$$\left(\frac{d_g(z,y)}{R_{\alpha,J}(z)}\right)^\gamma \left(\frac{d_g(z,y)}{R_{\alpha,J}(y)}\right)^{\gamma+l} \asymp \left(\frac{d_g(z,\zai)}{R_{\alpha,J}(z)}\right)^\gamma \left(\frac{d_g(z,\zai)}{\mai}\right)^{\gamma+l}\geq  \left(3R\right)^{\gamma+l}.$$
It then follows from \eqref{bnd:G:1} that for all $l<2k$,
\begin{eqnarray*}
|\nabla^l_y\hat{G}_\alpha(z,y)|&\leq& C(\gamma) d(y,z)^{2k-n-l}\frac{d(y,z)^{2\gamma+l}}{R_{\alpha, J}(y)^{\gamma+l}R_{\alpha, J}(z)^\gamma}\\
&\leq & C(\gamma) \mai^{-\gamma-l}d_g(z,\zai)^{2k-n+2\gamma}R_{\alpha, J}(z)^{-\gamma}
\end{eqnarray*}
and then, for any $i\in J$, we have that
\begin{eqnarray}
&&\left|\sum_{l+m<2k}\int_{\partial B_{2R\mai}(\zai)}\bar{A_\alpha}_{lm}\star\nabla^l_y\hat{G}_\alpha(z,y)\star \nabla^m v_\alpha (y)\, d\sigma_g(y)\right|\label{ineq:78}\\
&&\leq C(\gamma) \sum_{l+m<2k}\int_{\partial B_{2R\mai}(\zai)} \mai^{-\gamma-l}d_g(z,\zai)^{2k-n+2\gamma}R_{\alpha, J}(z)^{-\gamma}\mai^{-\frac{n-2k}{2}-m}\,  d\sigma_g(y)\nonumber\\
&&\leq C(\gamma,R) \sum_{l+m<2k} \mai^{\frac{n-2k}{2}-\gamma }\mai^{2k-1-(m+l)}d_g(z,\zai)^{2k-n+2\gamma}R_{\alpha, J}(z)^{-\gamma}\nonumber
\end{eqnarray}
for all $z\in M-\Omega_{\alpha,J}(3R)$ and $i\in J$. Summing these inequalities yields
\begin{eqnarray*}
|v_\alpha(z)|&\leq &C\frac{\Vert u_\infty \Vert_\infty^{\crit-1} +\Vert u_\infty\Vert_{C^{2k}}}{R_{\alpha, J}(z)^\gamma}+C\sum_{i\in J}\frac{\mai^{\frac{n-2k}{2}-\gamma }d_g(z,\zai)^{2k-n+2\gamma}}{R_{\alpha, J}(z)^{\gamma}}
\end{eqnarray*}
and, coming back to the definition of $\va$, we get that
\begin{eqnarray}\label{ineq:69}
|\ua(z)|&\leq &C\frac{C(u_\infty) }{R_{\alpha, J}(z)^\gamma}+C\sum_{i\in J}\frac{\mai^{\frac{n-2k}{2}-\gamma }d_g(z,\zai)^{2k-n+2\gamma}}{R_{\alpha, J}(z)^{\gamma}}
\end{eqnarray}
for all $z\in M-\Omega_{\alpha,J}(3R)$, where $C(u_\infty):=\Vert u_\infty \Vert_\infty^{\crit-1}+\Vert u_\infty \Vert_{C^{2k}}$.

\medskip\noindent  We let $G_\alpha$ be the Green's function for the operator $P_\alpha$ with for $\alpha>0$ or $\alpha=\infty$. Since $P_\alpha\ua=|\ua|^{\crit-2}\ua$, with \eqref{ipp:P}, we have that
\begin{eqnarray*}
\ua(x)&=&\int_{M-\Omega_{\alpha,J}(3R)} G_\alpha(x,y)|\ua|^{\crit-2}\ua(y)\, dv_g(y)\\
&&+ \sum_{l+m<2k}\int_{\partial(M-\Omega_{\alpha,J}(3R))}\bar{A_\alpha}_{lm}\star\nabla^l_yG_\alpha(x,y)\star \nabla^m u_\alpha(y)\, d\sigma_g(y)
\end{eqnarray*}
for all $x\in M$. Standard estimates on the Green's function (see \cite{robert:gjms}) yield
\begin{equation}\label{est:G:34}
|\nabla_y^lG_\alpha(x,y)|\leq C d_g(x,y)^{2k-n-l}\hbox{ for all }x,y\in M,\, x\neq y\hbox{ and }l\leq 2k-1.
\end{equation}
Therefore, with these pointwise controls, for any $z\in M$ and $\delta>0$, we get that
\begin{eqnarray*}
&&|\ua(z)|\leq \left|\int_{M-{\mathcal B}_\delta(\alpha)} G_\alpha(z,y)|\ua|^{\crit-2}\ua(y)\, dv_g(y)\right|\\
&&+C\int_{{\mathcal B}_\delta(\alpha)-\Omega_{\alpha, J}(3R)}d_g(z,y)^{2k-n}|\ua(y)|^{\crit-1}\, dv_g(y)+C\sum_{i\in J}\int_{\partial B_{3R\mai}(\zai)}
\end{eqnarray*}
where ${\mathcal B}_\delta(\alpha):=\bigcup_{i=1}^NB_\delta(\zai)$
We take  $z\in M-\Omega_{\alpha,J}(4R)$. We fix $i\in J$. With \eqref{est:G:34}, the convergence \eqref{cv:tuai:bis} and $R>C_1$, we get 
\begin{eqnarray}
&&\left|\sum_{l+m<2k}\int_{\partial B_{3R\mai}(\zai)}\bar{A_\alpha}_{lm}\star\nabla^l_yG_\alpha(z,y)\star \nabla^m u_\alpha (y)\, d\sigma_g(y)\right|\nonumber\\
&&\leq C(\gamma) \sum_{l+m<2k}\int_{\partial B_{3R\mai}(\zai)} \mai^{-\frac{n-2k}{2}-m}d_g(z,\zai)^{2k-n-l}\,  d\sigma_g(y)\nonumber\\
&&\leq C(\gamma,R) \sum_{l+m<2k}\mai^{n-1} \mai^{-\frac{n-2k}{2}-m} d_g(z,\zai)^{2k-n-l}\nonumber\\
&&\leq C\mai^{\frac{n-2k}{2}}d_g(z,\zai)^{2k-n}\label{ineq:546}
\end{eqnarray}
for all $i\in J$. Therefore, summing these inequalities and using \eqref{ineq:69} yields
\begin{eqnarray}
|\ua(z)|&\leq  &C\sum_{i\in J} \mai^{\frac{n-2k}{2}}d_g(z,\zai)^{2k-n} +\left|I_{1,\delta}\right|+I_{2,\delta}(\alpha) +\sum_{i\in J}I_i(\alpha)\label{ineq:86}
\end{eqnarray}
$$\hbox{where }I_{1,\delta}(\alpha):= \int_{M-{\mathcal B}_\delta(\alpha)} G_\alpha(z,y)|\ua|^{\crit-2}\ua(y)\, dv_g(y) $$
$$I_{2,\delta}(\alpha):=C\int_{{\mathcal B}_\delta(\alpha)-\Omega_{\alpha, J}(3R)}d_g(z,y)^{2k-n}\left(\frac{C( u_\infty )^{\crit-1} }{R_{\alpha, J}(y)^{\gamma(\crit-1)}}\right)\, dv_g(y)$$
$$I_i(\alpha):=C\int_{M-\Omega_{\alpha, J}(3R)}d_g(z,y)^{2k-n}\left(\frac{\mai^{(\frac{n-2k}{2}-\gamma )(\crit-1)}d_g(y,\zai)^{(\crit-1)(2k-n+2\gamma)}}{R_{\alpha, J}(y)^{\gamma(\crit-1)}}\right)\, dv_g(y).$$
We define ${\mathcal B}_\delta(\infty):=\bigcup_{i=1}^NB_\delta(\lim_{\alpha\to \infty}\zai)$. Since $\lim_{\alpha\to\infty}G_\alpha(x,y)=G_\infty(x,y)$ for all $x\neq y\in M$,  Lemma \ref{prop:lim:out} and \eqref{est:G:34} yield that for any $z\in M$ we get that
\begin{eqnarray}
&& \lim_{\alpha\to \infty}\left|I_{1,\delta}(\alpha)-u_\infty(z)\right|\nonumber\\
&&= \left|\int_{M-{\mathcal B}_\delta(\infty)} G_\infty(z,\cdot)|u_\infty|^{\crit-2}u_\infty\, dv_g-\int_{M} G_\infty(z,\cdot)|u_\infty|^{\crit-2}u_\infty\, dv_g\right|\nonumber\\
&&=\left|\int_{{\mathcal B}_\delta(\infty)} G_\infty(z,y)|u_\infty|^{\crit-2}u_\infty(y)\, dv_g(y)\right|\leq C\delta^{2k}\label{ineq:345}
\end{eqnarray}
where $C$ is independent of $z\in M$. When $u_\infty\equiv 0$, we can even be more precise: using \eqref{ineq:69} and $C(u_\infty)=0$ when $u_\infty\equiv 0$, we get that
\begin{eqnarray}
\left|I_{1,\delta}(\alpha)\right|&\leq&  C(\delta)\left|\int_{M-{\mathcal B}_\delta(\alpha)} d_g(z,y)^{2k-n}( \sum_{i\in J}\mai^{(\frac{n-2k}{2}-\gamma )(\crit-1)} ))\, dv_g(y)\right|\nonumber\\
&\leq& C(\delta) \sum_{i\in J}\mai^{\frac{n-2k}{2}}\hbox{ when }u_\infty\equiv 0\label{ineq:87}
\end{eqnarray}
since $(\crit-1)\gamma<2k$. Now, noting that 
\begin{equation}\label{ineq:R:debut}
\frac{1}{R_{\alpha, J}(z)^{\gamma(\crit-1)}}\leq \sum_{j\in J}\frac{1}{d_g(z,\zaj)^{\gamma(\crit-1)}},
\end{equation}
we get with Giraud's Lemma that
\begin{eqnarray}
I_{2,\delta}(\alpha)&\leq &C\times C( u_\infty )^{\crit-1}\sum_{i,j\in J}\int_{B_\delta(\zai) }d_g(z,y)^{2k-n}d_g(y,\zaj)^{-\gamma(\crit-1)}\, dv_g(y)\nonumber\\
&\leq & C\times C( u_\infty )^{\crit-1}  \delta^{2k-\gamma(\crit-1)}\label{ineq:456}
\end{eqnarray}
since $\gamma(\crit-1)<2k$. Putting together \eqref{ineq:345}, \eqref{ineq:87} and \eqref{ineq:456}, we then get that
\begin{equation}\label{ineq:sup:1}
\left|I_{1,\delta}(\alpha)\right|+I_{2,\delta}(\alpha)\leq C(\delta) \sum_{i\in J}\mai^{\frac{n-2k}{2}}\hbox{ when }u_\infty\equiv 0
\end{equation}
and 
\begin{equation*}
\left|I_{1,\delta}(\alpha)\right|+I_{2,\delta}(\alpha)\leq \Vert u_\infty\Vert_\infty+o(1)+C\times C( u_\infty )^{\crit-1}  \delta^{2k-\gamma(\crit-1)}\hbox{ as }\alpha\to \infty,
\end{equation*}
since $2k-\gamma(\crit-1)>0$. For all $\tau>0$, for $\delta=\delta(u_\infty,\tau)$ small enough, we have that
\begin{equation}\label{ineq:sup:2}
\left|I_{1,\delta}(\alpha)\right|+I_{2,\delta}(\alpha)\leq (1+\tau)\Vert u_\infty\Vert_\infty\hbox{ as }\alpha\to \infty\hbox{ when }u_\infty\not\equiv 0.
\end{equation}
Using again \eqref{ineq:R:debut}, we get that
\begin{equation}\label{ineq:III}
I_i(\alpha)\leq C\sum_{j\in J}\mai^{ (\frac{n-2k}{2}-\gamma )(\crit-1)}I_{i,j}(\alpha)
\end{equation}
where  for $i,j\in J$, we have set
\begin{equation}\label{def:I:ij}
I_{i,j}(\alpha):=\int_{M-B_{3R\mai}(\zai)- B_{3R\maj}(\zaj)}F_\alpha(y)\, dv_g(y)
\end{equation}
$$\hbox{where }F_\alpha(y):=d_g(z,y)^{a-n}d_g(y,\zai)^{-b-n}d_g(y,\zaj)^{-\eps}$$
with $a=2k$, $b=2k-2\gamma(\crit-1)>0$ and $\eps=\gamma(\crit-1)$. We split the integral
\begin{equation*}
I_{i,j}(\alpha)\leq \int_{D_\alpha^1}F_\alpha \, dv_g+ \int_{D_\alpha^2}F_\alpha \, dv_g+ \int_{D_\alpha^3}F_\alpha \, dv_g
\end{equation*}
with
$$D_\alpha^1:=\left(M-B_{3R\mai}(\zai) \right)\cap \left\{d_g(y,\zaj)\geq\frac{1}{2}d_g(y,\zai)\right\}\cap \left\{d_g(z,y)\geq\frac{1}{2}d_g(z,\zai)\right\}$$
$$D_\alpha^2:=\left(M-B_{3R\mai}(\zai) \right)\cap \left\{d_g(y,\zaj)\geq\frac{1}{2}d_g(y,\zai)\right\}\cap \left\{d_g(z,y)<\frac{1}{2}d_g(z,\zai)\right\}$$

$$D_\alpha^3:=\left(M-B_{3R\mai}(\zai)- B_{3R\maj}(\zaj)\right)\cap \left\{d_g(y,\zaj)<\frac{1}{2}d_g(y,\zai)\right\}.$$
We have that
\begin{eqnarray*} \int_{D_\alpha^1}F_\alpha \, dv_g&\leq &  Cd_g(z,\zai)^{a-n}\int_{M-B_{R\mai}(\zai)}d_g(y,\zai)^{-b-n-\eps}\, dv_g(y)\\
&\leq &Cd_g(z,\zai)^{a-n}\mai^{-b-\eps} .
\end{eqnarray*}
Since $d_g(y,\zai)\geq d_g(z,\zai)- d_g(z,y)>\frac{1}{2}d_g(z,\zai)$ for all $y\in D_\alpha^2$, we get that
\begin{eqnarray*} \int_{D_\alpha^2}F_\alpha \, dv_g&\leq &  2^{b+n+\eps}d_g(z,\zai)^{-b-n-\eps}\int_{\left\{d_g(z,y)<\frac{1}{2}d_g(z,\zai)\right\}}d_g(z,y)^{a-n}\, dv_g(y)\\
&\leq &Cd_g(z,\zai)^{a-b-n-\eps}\leq C d_g(z,\zai)^{a-n}\mai^{-b-\eps} \left(\frac{\mai}{d_g(z,\zai)}\right)^{b+\eps}\\
&\leq &  C d_g(z,\zai)^{a-n}\mai^{-b-\eps}
\end{eqnarray*}
since $z\in M-B_{4R\mai}(\zai)$.

\smallskip\noindent For any $y\in D_\alpha^3$, $|d_g(y,\zai)-d_g(\zaj,\zai)|\leq d_g(y,\zaj)<\frac{1}{2}d_g(y,\zai)$, and then we get $\frac{2}{3}d_g(\zai,\zaj)<d_g(\zai,y)<2d_g(\zai,\zaj)$, and then
\begin{eqnarray*} \int_{D_\alpha^3}F_\alpha \, dv_g&\leq & C d_g(\zai,\zaj)^{-b-n} \int_{D_\alpha} d_g(z,y)^{a-n}d_g(y,\zaj)^{-\eps}\, dv_g(y)
\end{eqnarray*}
where $D_\alpha:=\left(M- B_{3R\maj}(\zaj)\right)\cap\{d_g(y,\zaj)< d_g(\zai,\zaj)\}$. Assume that $d_g(z,\zaj)\geq 2 d_g(\zai,\zaj)$. We then get that
\begin{equation}\label{eq:123}
d_g(z, \zai)\leq d_g(z,\zaj)+d(\zaj,\zai)\leq \frac{3}{2}d_g(z,\zaj).
\end{equation}
For all $y\in M$ such that $d_g(y,\zaj)< d_g(\zai,\zaj)$, we have that 
\begin{eqnarray*}
d_g(z,y)&\geq& d_g(z,\zaj)-d_g(y,\zaj)>d_g(z,\zaj)- d_g(\zai,\zaj)\\
&\geq& d_g(z,\zaj)-\frac{1}{2}d_g(z,\zaj)=\frac{1}{2}d_g(z,\zaj).
\end{eqnarray*}
Therefore, using \eqref{eq:123} and  \eqref{lim:34}, we get that 
\begin{eqnarray} 
&&\int_{D_\alpha^3}F_\alpha \, dv_g\nonumber\\
&&\leq  C d_g(\zai,\zaj)^{-b-n} d_g(z,\zaj)^{a-n}\int_{ \{d_g(y,\zaj)< d_g(\zai,\zaj)\}} d_g(y,\zaj)^{-\eps}\, dv_g(y)\nonumber\\
&&\leq C d_g(\zai,\zaj)^{-b-\eps} d_g(z,\zaj)^{a-n}\leq C  d_g(\zai,\zaj)^{-b-\eps} d_g(z,\zai)^{a-n}\nonumber\\
&&\leq  C   \left(\frac{\mai}{d_g(\zai,\zaj)}\right)^{b+\eps}\mai^{-b-\eps} d_g(z,\zai)^{a-n}\leq C\mai^{-b-\eps} d_g(z,\zai)^{a-n}\label{eq:145}
\end{eqnarray}
Assume that $d_g(z,\zaj)<2 d_g(\zai,\zaj)$. The triangle inequality yields
\begin{equation}\label{eq:124}
d_g(z,\zai)\leq d_g(z,\zaj)+d_g(\zaj,\zai)\leq 3 d_g(\zai,\zaj).
\end{equation}
We let $Z\in B_2(0)\subset\rn$ be such that $z=\hbox{exp}_{\zaj}(d_g(\zai,\zaj)Z)$. Performing the change of variable $y=\hbox{exp}_{\zaj}(d_g(\zai,\zaj)Y)$, comparing the distances via \eqref{lem:lip}, using $\eps<n$,  with Giraud's Lemma, \eqref{eq:124} and arguing as in \eqref{eq:145}, we get that
\begin{eqnarray*} \int_{D_\alpha^3}F_\alpha \, dv_g&\leq & C d_g(\zai,\zaj)^{a-b-\eps-n} \int_{\{|Y|< 1 \}} |Y-Z|^{a-n}|Y|^{-\eps}\, dY\\
&\leq & C d_g(\zai,\zaj)^{a-b-\eps-n} \leq C  d_g(\zai,\zaj)^{-b-\eps}  d_g(\zai,\zaj)^{a -n}\\
&\leq & C\mai^{-b-\eps} d_g(z,\zai)^{a-n}.
\end{eqnarray*}
Putting all these identities in \eqref{ineq:III} yields
\begin{equation}\label{ineq:95}
I_i(\alpha)\leq C \mai^{ (\frac{n-2k}{2}-\gamma )(\crit-1)}\mai^{-b-\eps} d_g(z,\zai)^{a-n}\leq C \mai^{ \frac{n-2k}{2}}d_g(z,\zai)^{a-n}.
\end{equation}
Then, putting together \eqref{ineq:sup:1}, \eqref{ineq:sup:2} and \eqref{ineq:95} into \eqref{ineq:86}, we get that for all $\tau>0$, there exists $C_\tau>0$ such that
\begin{eqnarray}\label{ineq:part:1}
|\ua(z)|&\leq& (1+\tau)\Vert u_\infty\Vert_\infty+C_\tau\sum_{i\in J} \mai^{\frac{n-2k}{2}}d_g(z,\zai)^{2k-n} 
\end{eqnarray}
 for all $ z\in M-\Omega_{\alpha,J}(4R)$. It follows from \eqref{choice:1} that $\za\in M-\Omega_{\alpha,J}(4R)$ for $\alpha\to +\infty$. Therefore \eqref{ineq:part:1} holds for $z:=z_\alpha$. 
\subsection{The case when the $(\za)_\alpha$ approaches some of the $(\zai)_\alpha$}
We now assume that there exists $i\in\{1,...,N\}$ such that $d_g(\za,\zai)=O(\mai)$ and we define
\begin{equation}\label{choice:2}
I:=\{i\in\{1,...,N\}\hbox{ such that }d_g(\za,\zai)=O(\mai)\}\neq \emptyset.
\end{equation}
We claim that $I$ is fully ordered. Indeed, for $i,j\in I$ such that $d_g(\za,\zai)=O(\mai)$ and $d_g(\za,\zaj)=O(\maj)$. We then get that $d_g(\zai,\zaj)=O(\mai+\maj)$. Without loss of generality, we assume that $\mai=O(\maj)$, then $d_g(\zai,\zaj)=O( \maj)$ and then $i\preceq j$. This proves the claim.

\smallskip\noindent We set $l:=\min I$ for  $\preceq$.  We first assume that there exists $\eps_0>0$ such that
\begin{equation}\label{choice:3}
d_g(\za,\zai)\geq \eps_0\mal\hbox{ for all }i\in I_l,
\end{equation}
where $I_l=\{j\in\{1,...,N\}-\{l\}\hbox{ such that }d_g(\zal,\zaj)=O(\mal)\}$ is defined in \eqref{def:Ii}.  Since $d_g(\za,\zal)=O(\mal)$, it   follows from \eqref{cv:tuai:bis} that
\begin{equation}\label{ineq:110}
|\ua(\za)|\leq C\mal^{-\frac{n-2k}{2}}\leq C\left(\frac{\mal}{\mal^2+d_g(\za,\zal)^2}\right)^{\frac{n-2k}{2}}.
\end{equation}
We now assume that  there exists $i\in I_l$ such that $d_g(\za,\zai)=o(\mal)$. Therefore
\begin{equation*}
J\neq \emptyset\hbox{ where }J:=\{i\in I_l\hbox{ such that }d_g(\za,\zai)=o(\mal)\}
\end{equation*}
and we let $J_0$ be the set of maximal elements of $J$ for the order $\preceq$. Note that it follows from \eqref{eq:53} that for all $i\in J\subset I_l$, we have that $i\preceq l$, $i\neq l$ and $\mai=o(\mal)$. We  fix $i_0\in J_0$. We define
$$D_{J_0,\alpha}(R):=B_{\frac{1}{R}\mal}(z_{\alpha, i_0})-\bigcup_{i\in J_0}B_{R\mai}(\zai).$$
Arguing as to prove of \eqref{res:89}, we get that for $R>\max\{2C_1, 2C_1 C_2^{-1}\}$, we have that 
\begin{equation}\left\{\begin{array}{c}
R_\alpha(x)\geq \frac{1}{2}\min\{d_g(x,\zai)/\, i\in J_0\}\\
\hbox{ and }x\in M-\bigcup_{i=1}^N\Omega_{\alpha, i}(R)\end{array}\right\}\label{res:90}
\end{equation}
for all $x\in D_{J_0,\alpha}(2R)$. It then follows from \eqref{lim:exhaust:bis} and \eqref{res:90} that for any $\lambda>0$, there exists $R>0$ such that
\begin{equation}\label{ineq:46:bis}
R_{\alpha, J_0}(x)^{\frac{n-2k}{2}}|\ua(x)|\leq \lambda\hbox{ for all }x\in D_{J_0,\alpha}(R)
\end{equation}
where $R_{\alpha, J_0}(x):=\min\{d_g(x,\zai)/\, i\in J_0\}$ for all $x\in M$. The argument is now similar to the one used in the proof of \eqref{ineq:69}.  We fix $$0<\gamma<\min\left\{\frac{n-2k}{2},\frac{k}{\crit-1}\right\}.$$
We let $R>\max\{2C_1,2C_1 C_2^{-1}\}$ be such that \eqref{ineq:46:bis} holds for $\lambda:=\lambda_\gamma$, where $\lambda_\gamma$ is given in Theorem \ref{th:Green:pointwise:BIS}. Let $\bar{G}_\alpha$ be the Green's function for the operator $P_\alpha-  {\bf 1}_{D_{J_0,\alpha}(R)}|\ua|^{\crit-2}$ given by Theorem \ref{th:Green:main}. The pointwise estimates \eqref{ineq:funda} of Theorem \ref{th:Green:pointwise:BIS} then yield for any $l=0,...,2k-1$
\begin{equation}\label{bnd:G:2}
d_g(x,y)^{n-2k+l}|\nabla_y^l \bar{G}_\alpha(x,y)|\leq C \max\left\{1, \left(\frac{d_g(x,y)}{R_{\alpha,J_0}(x)}\right)^\gamma \left(\frac{d_g(x,y)}{R_{\alpha,J_0}(y)}\right)^{\gamma+l} \right\}
\end{equation}
for all $x\neq y\in M-\{\zai/\, i\in J_0\}$. We use again \eqref{ipp:P}. Since for any $z\in  D_{J_0,\alpha}(3R)$,$(P_\alpha-{\bf 1}_{D_{J_0,\alpha}(R)}|\ua|^{\crit-2})\ua=0$ on $D_{J_0,\alpha}(2R)$ and since the balls involved in the definition $D_{J_0,\alpha}(2R)$ are all disjoint, Green's representation formula yields
\begin{eqnarray*}
&&\ua(z)\\
&&= \sum_{l+m<2k}\int_{\partial D_{J_0,\alpha}(2R)}\bar{A_\alpha}_{lm}\star\nabla^l_y\bar{G}_\alpha(z,y)\star \nabla^m \ua(y)\, d\sigma_g(y)\\
&&= \sum_{l+m<2k}\int_{\partial B_{\frac{1}{2R}\mal}(z_{\alpha, i_0}) }\bar{A_\alpha}_{lm}\star\nabla^l_y\bar{G}_\alpha(z,y)\star \nabla^m \ua(y)\, d\sigma_g(y)\\&&-\sum_{i\in J_0}\sum_{l+m<2k}\int_{\partial B_{2R\mai}(\zai)}\bar{A_\alpha}_{lm}\star\nabla^l_y\bar{G}_\alpha(z,y)\star \nabla^m \ua (y)\, d\sigma_g(y).
\end{eqnarray*}
The second integral of the right-hand-side is estimated as in \eqref{ineq:78}, so  we get that 
\begin{eqnarray*}
&&\left|\sum_{i\in J_0}\sum_{l+m<2k}\int_{\partial B_{2R\mai}(\zai)}\bar{A_\alpha}_{lm}\star\nabla^l_y\bar{G}_\alpha(z,y)\star \nabla^m \ua (y)\, d\sigma_g(y)\right|\\
&&\leq C\sum_{i\in J_0}\frac{\mai^{\frac{n-2k}{2}-\gamma }d_g(z,\zai)^{2k-n+2\gamma}}{R_{\alpha, J_0}(z)^{\gamma}}\end{eqnarray*}
for all $z\in D_{J_0,\alpha}(3R)$. We deal with the first integral. Noting that $i_0\in J_0\subset I_l$ and $2R>C_1$ as in \eqref{lim:d:1}, it follows from  \eqref{cv:tuai:bis} that there exists  $C(R)>0$ such that
$$|\nabla^m \ua(y)|\leq C(R)\mal^{-\frac{n-2k}{2}-m}\hbox{ for all }m<2k\hbox{ and }y\in   \partial   B_{\frac{1}{2R}\mal}(z_{\alpha, i_0}).$$
Since $d(z,z_{\alpha, i_0})<\frac{\mal}{3R}$, we get that $d_g(z,y)\asymp d_g(z_{\alpha, i_0},y)\asymp\mal$ for all $y\in  \partial   B_{\frac{1}{2R}\mal}(z_{\alpha, i_0})$. Moreover, we have that
$$|d_g(y, \zai)-d_g(y,z_{\alpha, i_0} )|\leq d_g(\zai, z_{\alpha, i_0})=o(\mal)$$
for all $i\in J_0\subset J$ and $y\in  \partial   B_{\frac{1}{2R}\mal}(z_{\alpha, i_0})$. Therefore, $R_{\alpha, J_0}(y)\asymp \mal$ for all $y\in  \partial   B_{\frac{1}{2R}\mal}(z_{\alpha, i_0})$. We then get that $ R_{\alpha, J_0}(y)\leq C d_g(z,y)$ for all $y\in  \partial   B_{\frac{1}{2R}\mal}(z_{\alpha, i_0})$. Moreover, $R_{\alpha, J_0}(z)\leq d_g(z, z_{\alpha, i_0})\leq \frac{\mal}{3R}$. It then follows from \eqref{bnd:G:2} that 
\begin{eqnarray*}
|\nabla^l_y\bar{G}_\alpha(z,y)|&\leq& C(\gamma) d(y,z)^{2k-n-l}\frac{d(y,z)^{2\gamma+l}}{R_{\alpha, J_0}(y)^{\gamma+l}R_{\alpha, J_0}(z)^\gamma}\leq  C(\gamma) \frac{\mal^{2k-n-l+\gamma}}{ R_{\alpha, J_0}(z)^\gamma}
\end{eqnarray*}
for all $l<2k$ and $y\in  \partial   B_{\frac{1}{2R}\mal}(z_{\alpha, i_0})$. Therefore
\begin{eqnarray}
&& \sum_{l+m<2k}\int_{\partial B_{\frac{1}{2R}\mal}(z_{\alpha, i_0}) }\bar{A_\alpha}_{lm}\star\nabla^l_y\bar{G}_\alpha(z,y)\star \nabla^m \ua(y)\, d\sigma_g(y)\nonumber\\
&&\leq C  \frac{1}{ R_{\alpha, J_0}(z)^\gamma} \mal^{ -\frac{n-2k}{2}+ 2k-1-m-l+\gamma}\leq C   \frac{1}{ R_{\alpha, J_0}(z)^\gamma} \mal^{ -\frac{n-2k}{2} +\gamma}.\label{ineq:120}
\end{eqnarray}
So that, summing these inequalities yields
\begin{equation*}
|\ua(z)|\leq  C\frac{1}{ R_{\alpha, J_0}(z)^\gamma} \mal^{ -\frac{n-2k}{2} +\gamma}+ C\sum_{i\in J_0}\frac{\mai^{\frac{n-2k}{2}-\gamma }d_g(z,\zai)^{2k-n+2\gamma}}{R_{\alpha, J_0}(z)^{\gamma}}
\end{equation*}
for all $z\in D_{\alpha, J_0}(3R)$. We now argue as in the proof of \eqref{ineq:part:1} and we let $G_\alpha$ be the Green's function for the operator $P_\alpha$. With \eqref{ipp:P}, we have that
\begin{eqnarray*}
\ua(x)&=&\int_{D_{\alpha, J_0}(3R)} G_\alpha(x,y)|\ua|^{\crit-2}\ua(y)\, dv_g(y)\\
&&+ \sum_{l+m<2k}\int_{\partial D_{\alpha, J_0}(3R)}\bar{A_\alpha}_{lm}\star\nabla^l_yG_\alpha(x,y)\star \nabla^m u_\alpha\, d\sigma_g
\end{eqnarray*}
for all $x\in M$. With \eqref{est:G:34}, for any $z\in D_{\alpha, J_0}(4R)$, we get that
\begin{eqnarray*}
|\ua(z)|&\leq& C\int_{D_{\alpha, J_0}(3R)}d_g(z,y)^{2k-n}|\ua(y)|^{\crit-1}\, dv_g(y)\\
& &+C\int_{\partial B_{\frac{1}{3R}\mal}(z_{\alpha, i_0})}+C\sum_{i\in J_0}\int_{\partial B_{3R\mai}(\zai)}.
\end{eqnarray*}
Arguing as in the proof of \eqref{ineq:120}, we get that $\int_{\partial B_{\frac{1}{3R}\mal}(z_{\alpha, i_0})}=O(\mal^{ -\frac{n-2k}{2}  })$. So, as in \eqref{ineq:546}, we get that
\begin{eqnarray}
|\ua(z)|&\leq& C\mal^{ -\frac{n-2k}{2}  }+C\sum_{i\in J_0} \mai^{\frac{n-2k}{2}}d_g(z,\zai)^{2k-n} + \tilde{I}_0(\alpha) +\sum_{i\in J_0}\tilde{I}_i(\alpha)\label{final:1}
\end{eqnarray}
for all $z\in D_{\alpha, J_0}(4R)$, where
$$\tilde{I}_0(\alpha):=C\int_{D_{\alpha, J_0}(3R)}d_g(z,y)^{2k-n}\left(\frac{\mal^{ -\frac{n+2k}{2} +\gamma(\crit-1)}  }{R_{\alpha, J_0}(y)^{\gamma(\crit-1)}}\right)\, dv_g(y)$$
$$\tilde{I}_i(\alpha):=C\int_{D_{\alpha, J_0}(3R)}d_g(z,y)^{2k-n} \frac{\mai^{(\frac{n-2k}{2}-\gamma )(\crit-1)}d_g(y,\zai)^{(\crit-1)(2k-n+2\gamma)}}{R_{\alpha, J_0}(y)^{\gamma(\crit-1)}} \, dv_g(y).$$
Now, using \eqref{ineq:R:debut}, we get that
\begin{eqnarray*}
&&\tilde{I}_0(\alpha)\leq C\mal^{ -\frac{n+2k}{2} +\gamma(\crit-1)}  \sum_{j\in J_0}\int_{D_{\alpha, J_0}(3R) }d_g(z,y)^{2k-n}d_g(y,\zaj)^{-\gamma(\crit-1)}\, dv_g(y)\\
&&\leq C\mal^{ -\frac{n+2k}{2} +\gamma(\crit-1)}  \sum_{j\in J_0}\int_{B_{\frac{1}{3R}\mal}(z_{\alpha, i_0}) }d_g(z,y)^{2k-n}d_g(y,\zaj)^{-\gamma(\crit-1)}\, dv_g(y).
\end{eqnarray*}
For any $j\in J_0$, we have that $d_g(\zaj, z_{\alpha, i_0})=o(\mal)$, we then let $Z_{\alpha, j}\in B_1(0)\subset \rn$ be such that $\zaj:=\hbox{exp}_{z_{\alpha, i_0}}(\mal Z_{\alpha, j})$ with $\lim_{\alpha\to \infty}Z_{\alpha, j}=0$. Since $z\in B_{\frac{1}{4R}\mal}(z_{\alpha, i_0})$, we let $Z\in B_{1/(4R)}(0)$ be such that $z=\hbox{exp}_{z_{\alpha, i_0}}(\mal Z )$. The change of variable $y:=\hbox{exp}_{z_{\alpha, i_0}}(\mal Y)$, \eqref{lem:lip}, $\gamma(\crit-2)<2k$ and  Giraud's Lemma yield
\begin{eqnarray}
&&\tilde{I}_0(\alpha)\leq  \nonumber\\
&&\leq C\mal^{ -\frac{n-2k}{2} }  \sum_{j\in J_0}\int_{B_{\frac{1}{3R}}(0) } |Z-Y|^{2k-n}| Y- Z_{\alpha, j}|^{-\gamma(\crit-1)} \, dY\leq C\mal^{ -\frac{n-2k}{2} }. \label{final:2}
\end{eqnarray}
Using again \eqref{ineq:R:debut}, we get that $\tilde{I}_i(\alpha)\leq C\sum_{j\in J_0}\mai^{ (\frac{n-2k}{2}-\gamma )(\crit-1)}I_{i,j}(\alpha)$ where the $I_{i,j}(\alpha)$ have been defined in \eqref{def:I:ij}. Therefore, with the computations made to prove \eqref{ineq:95} and the inequalities \eqref{final:1} and \eqref{final:2},  we get
\begin{eqnarray*}
|\ua(z)|&\leq& C\mal^{ -\frac{n-2k}{2}  }+C\sum_{i\in J_0} \mai^{\frac{n-2k}{2}}d_g(z,\zai)^{2k-n}  \end{eqnarray*}
for all $z\in D_{\alpha, J_0}(4R)$. The definition of $D_{\alpha, J_0}(4R)$ then yields
\begin{eqnarray}
|\ua(z)|&\leq&  C\sum_{i\in J_0\cup\{l\}} \left(\frac{\mai}{\mai^2+d_g(z,\zai)^2}\right)^{\frac{n-2k}{2}}   \label{case:2}
\end{eqnarray}
for all  $z\in D_{\alpha, J_0}(4R)$. Since $\za\in D_{\alpha, J_0}(4R)$ for $\alpha\to +\infty$, we get that \eqref{case:2} holds for $z:=z_\alpha$. 

\smallskip\noindent We are in position to conclude. If \eqref{choice:1} holds, then we have proved \eqref{ineq:part:1}. If  \eqref{choice:2}  and \eqref{choice:3} hold, we have proved \eqref{ineq:110}. If  \eqref{choice:2} holds and \eqref{choice:3} does not hold, we have proved  \eqref{case:2} holds for $z:=z_\alpha$. These cases prove that for all $\tau>0$, there exists $C_\tau>0$ such that 
\begin{eqnarray*}
|\ua(\za)|&\leq& (1+\tau)\Vert u_\infty \Vert_\infty+C_\tau\sum_{i=1}^N \left(\frac{\mai}{\mai^2+d_g(\za,\zai)^2}\right)^{\frac{n-2k}{2}} 
\end{eqnarray*}
for any family $(\za)_\alpha\in M$. This proves Theorem \ref{th:estim-co:intro}.

\section{Green's function with Hardy potential: construction and first estimates}\label{sec:green1}
\begin{defi}\label{def:p} We say that an operator $P$ is of type $O_{k,L}$ if
$$\bullet\; P:=\Delta_g^k+\sum_{i=0}^{k-1} (-1)^i\nabla^i(A^{(i)} \nabla^i)\hbox{ for }\alpha>0$$
where for all $i=0,...,k-1$, $ A^{(i)}\in C^{i}_{\chi}(M,\Lambda_{S}^{(0,2i)}(M))$ is a family of $C^{i}-$field of $(0,2i)-$tensors on $M$ such that:\par
$\bullet$ For any $i$ , $A^{(i)}(T,S)=A^{(i)}(S,T)$ for any $(i,0)-$tensors $S$ and $T$. \par
$\bullet$ $\Vert A^{(i)}\Vert_{C^{i}}\leq L$ for all $i=0,...,k-1$\par
$\bullet$ $P$ is  coercive and 
$$\int_M u P u\, dv_g=\int_M (\Delta_g^{\frac{k}{2}}u)^2\, dv_g+\sum_{i=0}^{k-1}\int_M A^{(i)} (\nabla^iu,\nabla^i u)\, dv_g\geq \frac{1}{L}\Vert u\Vert_{H_k^2}^2$$
for all $u\in H_k^2(M)$.\end{defi}
For $N\geq 1$  and $\lambda>0$, we define
\begin{equation*}
\Pl(N):=\left\{\begin{array}{cc}
(\F, V)\in M^N\times L^1(M)\hbox{ such that}\\
|V(x)|\leq \lambda R_{\mathcal F}(x)^{-2k}\hbox{ for all }x\in \Mmx\end{array}\right\}.
\end{equation*}
where for $\F=\{p_1,...,p_N\}$, we have let
$$R_{\mathcal F}(x):=\min\{d_g(x,p_i)/\, i=1,...,N\}.$$
This section is devoted to the proof of the following theorems:
\begin{theorem}\label{th:Green:main} Let $(M,g)$ be a compact Riemannian manifold of dimension $n$ without boundary. Fix $k\in\nn$ such that $2\leq 2k<n$, $L>0$, $\lambda>0$ and  $N\geq 1$. We consider an operator $P\in O_{k,L}$ and a   $(\mathcal F,V)\in \Pl(N)$ such that $P-V$ is coercive. We set    ${\mathcal F}:=\{p_1,...,p_N\}\subset M$. Then there exists $\lambda_0(k,L)>0$ such that for $\lambda<\lambda_0(k,L)$, there exists $G:(M -  \F)\times (M -  \F) - \{(z,z)/z\in M -  \F\}\to \rr$ such that
\begin{itemize}
\item For all $x\in M -  \F$, $G(x,\cdot)\in L^q(M)$ for all $1\leq q<\frac{n}{n-2k}$;
\item For all $x\in M -  \F$, $G(x,\cdot)\in L^{\crit}_{loc}(M - \{x\})$;
\item For all $f\in L^{\frac{2n}{n+2k}}(M)\cap L^p_{loc}(M -  \F)$, $p>\frac{n}{2k}$, we let $\varphi\in H_k^2(M)$ such that $P\varphi=f$ in the weak sense. Then $\varphi\in C^0(M -  \F)$ and
$$\varphi(x)=\int_M  G(x,\cdot) f\, dv_g\hbox{ for all }x\in M -  \F.$$
\end{itemize}
Moreover, such a function $G$ is unique. It is the Green's function for $P-V$. In addition, for all $x\in M -  \F$, $G(x,\cdot)\in H_{2k}^p(M - \{x, \F\})$ for all $1<p<\infty$.
\end{theorem}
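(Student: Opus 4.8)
Since $P\in O_{k,L}$ is coercive with bounded coefficients, it has a classical Green's function $G_P$ with the standard bounds $|\nabla_y^lG_P(x,y)|\le C\,d_g(x,y)^{2k-n-l}$ for $l\le 2k-1$, together with $G_P(x,\cdot)\in L^q(M)$ for $q<\frac{n}{n-2k}$ and the representation formula for $P$ (see \cite{robert:gjms}). The operator $P-V$ is a Hardy-subcritical perturbation of $P$ with small constant: $|V|\le\lambda R_{\mathcal F}^{-2k}$ with $\lambda<\lambda_0$, so the guiding principle is that $G$ is a mild correction of $G_P$ with at worst a very small extra singularity at the poles $p_i$. Two classical ingredients drive everything: the higher-order Hardy inequality $\int_M\frac{u^2}{d_g(x,p)^{2k}}\,dv_g\le C(M,k)\Vert u\Vert_{H_k^2}^2$, which, combined with the fact that $R_{\mathcal F}^{-2k}=\max_i d_g(\cdot,p_i)^{-2k}$ (only the nearest pole matters, so a partition-of-unity argument over Voronoi-type cells gives $\int_M R_{\mathcal F}^{-2k}u^2\le C(M,k)\Vert u\Vert_{H_k^2}^2$ with a constant independent of $N$ and of the configuration $\mathcal F$ --- this is what makes the threshold $\lambda_0$ depend only on $k,L$); and Giraud's lemma for convolutions of Riesz-type kernels.

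\textbf{Existence and the $L^q$ estimate.} Fix $x\in M-\mathcal F$ and choose $f_j\in C^\infty_c(M)$, $f_j\ge0$, $\int_Mf_j\,dv_g=1$, $\mathrm{supp}\,f_j\subset B_{1/j}(x)$. Since $f_j\in L^{2n/(n+2k)}(M)\subset(H_k^2)^*$ and $P-V$ is coercive, set $\varphi_j:=(P-V)^{-1}f_j\in H_k^2(M)$. A uniform bound $\Vert\varphi_j\Vert_{L^q(M)}\le C$ for $q<\frac{n}{n-2k}$ follows by duality: for $g\in L^{q'}(M)$, $q'>\frac{n}{2k}$, let $\psi$ solve $(P-V)\psi=g$; coercivity gives $\psi\in H_k^2(M)$, and near $x$ the potential $V$ is bounded, so interior elliptic theory gives $\psi$ continuous near $x$ with $|\psi(x)|\le C\Vert g\Vert_{q'}$. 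Hence $\bigl|\int_M\varphi_jg\,dv_g\bigr|=\bigl|\int_Mf_j\psi\,dv_g\bigr|\le\sup_{B_{1/j}(x)}|\psi|\le C\Vert g\Vert_{q'}$, and taking the supremum over $g$ proves the claim. Away from $x$ one has $(P-V)\varphi_j=0$, so by the near-$\mathcal F$ estimates below (and ordinary interior regularity elsewhere) the $\varphi_j$ are bounded in $C^{2k}_{loc}(M-(\{x\}\cup\mathcal F))$ and in $L^{\crit}$ on a fixed ball around each $p_i$. Extracting a limit $G(x,\cdot)$ in $L^q(M)$ and in $C^{2k}_{loc}(M-(\{x\}\cup\mathcal F))$ yields the first two bullet points.

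\textbf{Behaviour near $\mathcal F$, representation formula, uniqueness and higher regularity.} On a small ball $B$ around a pole $p_i$, with $x\notin\overline B$, $G(x,\cdot)$ weakly solves $(P-V)G(x,\cdot)=0$ with $|V|\le\lambda d_g(\cdot,p_i)^{-2k}$. Weighted Caccioppoli-type estimates for this equation, in which the Hardy term is absorbed by the weighted Hardy--Rellich inequality precisely because $\lambda<\lambda_0$, together with a Moser-type iteration (each step loses $1/s$ integrability by multiplying by $V\in L^s_{loc}$, $s<\frac n{2k}$, and regains $\frac{2k}{n}$ by Agmon--Douglis--Nirenberg estimates, for a net gain that can be taken close to $\frac{2k}{n}$), upgrade $G(x,\cdot)\in L^q(B)$ first to $L^{\crit}(B)$ and then to $H_{2k}^p(B-\{p_i\})$ for every $p<\infty$; with interior regularity away from $\{x\}\cup\mathcal F$ this gives $G(x,\cdot)\in H_{2k}^p(M-\{x,\mathcal F\})$. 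For the representation formula, let $f$ be in the stated class: then $\varphi=(P-V)^{-1}f\in H_k^2(M)$ by duality, and $\varphi\in C^0(M-\mathcal F)$ by elliptic regularity away from $\mathcal F$ (there $f\in L^p_{loc}$ with $p>\frac n{2k}$ and $V$ is bounded). Testing gives $\int_Mf_j\varphi\,dv_g=\int_M\varphi_jf\,dv_g$; letting $j\to\infty$, the left side tends to $\varphi(x)$, while the right side tends to $\int_MG(x,\cdot)f\,dv_g$ --- justified by $C^{2k}_{loc}$ convergence away from $\{x\}$ and, near $\mathcal F$, by Hölder's inequality with $G(x,\cdot)\in L^{\crit}$ and $f\in L^{2n/(n+2k)}=(L^{\crit})'$. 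Uniqueness is immediate: if $G,\widetilde G$ both satisfy the three bullet points, then $\int_M(G-\widetilde G)(x,\cdot)f\,dv_g=0$ for every such $f$, and this class is dense enough to force $G(x,\cdot)=\widetilde G(x,\cdot)$ a.e.; so $G$ is the Green's function of $P-V$.

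\textbf{Main obstacle.} The delicate point is the control of $G$ near the pole set $\mathcal F$. At order $2k\ge4$ there is no maximum principle and no Harnack inequality, so $G(x,\cdot)$ cannot be dominated by an explicit barrier; the effect of the \emph{several} Hardy potentials, whose poles may moreover be mutually very close, has to be absorbed entirely through the integral identity $G(x,\cdot)=G_P(x,\cdot)+\int_MG_P(\cdot,z)V(z)G(x,z)\,dv_g(z)$ and sharp kernel estimates. Turning this into a genuine contraction --- which simultaneously produces the sharp weighted pointwise control used in Section~\ref{sec:strong} --- requires choosing the right weight, the one carrying the small exponent $\gamma$, and, above all, keeping every constant uniform in $N$ and in the configuration $\mathcal F$; this is exactly where the $\max$-structure of $R_{\mathcal F}^{-2k}$, the Giraud estimates, and the smallness $\lambda<\lambda_0(k,L)$ are consumed.
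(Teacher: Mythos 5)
Your overall skeleton (duality to get the $L^q$ bound on $G(x,\cdot)$, representation by testing against approximate Dirac masses, uniqueness by density) is consistent with the paper's, but the core of the theorem — uniform control of $G(x,\cdot)$ and of the auxiliary solutions near the pole set $\F$ — is where your argument breaks down. Your "Moser-type iteration" cannot work as stated: if $\varphi\in L^{q_0}$ locally and $P\varphi=V\varphi$ with $V\in L^s_{loc}$ only for $s<\frac{n}{2k}$, then $V\varphi\in L^r$ with $\frac1r=\frac1{q_0}+\frac1s$, and Agmon--Douglis--Nirenberg plus Sobolev give $\varphi\in L^{r^*}$ with $\frac1{r^*}=\frac1{q_0}+\frac1s-\frac{2k}{n}>\frac1{q_0}$; each step therefore \emph{loses} integrability (the net change in $\frac1q$ is $\frac1s-\frac{2k}{n}>0$, tending to $0^+$ but never negative), so you can never climb from $L^q$, $q<\frac{n}{n-2k}$, to $L^{\crit}$ near a pole. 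The Hardy potential is scale-critical: the smallness $\lambda<\lambda_0$ must be used quantitatively in the local estimate itself, and your scheme never uses it. This is exactly what the paper's regularity Lemma \ref{lem:bis} provides (the weighted bound $R_\F(x)^{\gamma+l}|\nabla^l\varphi(x)|\le C\Vert\varphi\Vert_{L^p}$), and it is proved not by iteration but by a rescaling/compactness (blow-up) argument around the poles, ending with a Liouville-type step via the polyharmonic Poisson kernel; moreover the paper first replaces $V$ by the truncated bounded potentials $V_\eps$ so that genuine Green's functions $G_\eps$ exist by the known $L^\infty$-potential theory, proves the weighted estimates uniformly in $\eps$, and only then passes to the limit $\eps\to0$. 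Your last paragraph correctly identifies that the perturbation identity $G=G_P+G_P*(VG)$ would have to be turned into a contraction with configuration-uniform constants, but you do not carry this out, so the near-$\F$ bullet points ($L^{\crit}_{loc}$ away from $x$, $H_{2k}^p$ regularity, and the justification of passing to the limit in the representation formula near $\F$) remain unproved.

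A secondary but genuine error: your claim that the multi-pole Hardy inequality $\int_M R_\F^{-2k}u^2\,dv_g\le C\Vert u\Vert_{H_k^2}^2$ holds with $C$ independent of $N$ via a Voronoi partition is false. Decomposing $M$ into the cells where $p_i$ is the nearest pole and applying the single-pole inequality on each cell yields the constant $N\,C_H(k)$, not an $N$-free one; and no $N$-independent constant can exist, since for poles that are $\eps$-dense in $M$ one has $R_\F\le\eps$ everywhere, forcing $C\ge\eps^{-2k}$ against $\Vert u\Vert_{H_k^2}^2\ge\Vert u\Vert_2^2$. This only affects the stated dependence of $\lambda_0$ (and the coercivity constant), but the justification you give for it is not valid.
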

In addition, we get the following pointwise control:
\begin{theorem}\label{th:Green:pointwise:BIS} Let $(M,g)$ be a compact Riemannian manifold of dimension $n$ without boundary. Fix $k\in\nn$ such that $2\leq 2k<n$, $L>0$, $\lambda>0$ and   $N\geq 1$.  We consider an operator $P\in O_{k,L}$. Then for any $\gamma\in (0,n-2k)$, there exists $\lambda_\gamma>0$ such that all $\lambda\leq \lambda_\gamma$ and $(\F,V)\in \Pl(N)$ such that  
$$R_{\mathcal F}(x)^{2k}|V(x)|\leq \lambda\hbox{ for all }x\in \Mmx,\, \F:=\{p_1,...,p_N\}\subset M,$$
the Green's function $G$ of $P-V$ defined as in Theorem \ref{th:Green:main} satisfies the following pointwise estimates: for any $l_1, l_1\leq 2k-1$, then 
\begin{eqnarray}
&&d_g(x,y)^{n-2k+l_1+l_2}|\nabla_x^{l_1}\nabla_y^{l_2}G(x,y)|\nonumber\\
&&\leq C(\gamma,L,\lambda, k,N) \max\left\{1, \left(\frac{d_g(x,y)}{ R_{\mathcal F}(x) }\right)^{\gamma+l_1}\left(\frac{d_g(x,y)}{ R_{\mathcal F}(y) }\right)^{\gamma+l_2}\right\}\label{ineq:funda}
\end{eqnarray}
for all $x,y\in \Mmx$ such that $x\neq y$, where $C(\gamma,L,\lambda, k,N)$ depends only on $(M,g)$, $L$, $\lambda$ and $N$. Note that these estimates are uniform with respect to  $\mathcal F$.
\end{theorem}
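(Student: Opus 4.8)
The plan is to estimate $G$ by expanding it as a Neumann series in the Green's function $G_P$ of the unperturbed operator $P$; the hard analytic input is a Giraud-type estimate tailored to the multi-pole Hardy weight. Throughout, write $\Phi_\gamma(x,y):=d_g(x,y)^{2k-n}\max\{1,(d_g(x,y)^2/(R_\F(x)R_\F(y)))^\gamma\}$, the right-hand side of \eqref{ineq:funda} for $l_1=l_2=0$. Since $P\in O_{k,L}$, classical elliptic theory (as recalled around \eqref{est:G:34}, see \cite{robert:gjms}) furnishes $G_P$ with $|\nabla_x^{l_1}\nabla_y^{l_2}G_P(x,y)|\le C_P\,d_g(x,y)^{2k-n-l_1-l_2}$ for $l_1,l_2\le 2k-1$, $C_P=C_P(k,L)$; in particular $|G_P|\le C_P\Phi_\gamma$. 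From $(P-V)G=\mathrm{Id}$, i.e. $PG=\mathrm{Id}+VG$, and the representation formula of Theorem \ref{th:Green:main} for $P$ (applied to mollifications of $\delta_y$, then passing to the limit using the $L^q$- and $L^{\crit}_{loc}$-integrability of $G$ from that theorem), one obtains
$$G(x,y)=G_P(x,y)+\int_M G_P(x,z)\,V(z)\,G(z,y)\,dv_g(z),$$
which suggests setting $G_0:=G_P$, $G_{m+1}(x,y):=\int_M G_P(x,z)V(z)G_m(z,y)\,dv_g(z)$ and studying $\sum_{m\ge 0}G_m$.

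The technical core, and the source of the uniformity in $\F$, is the following estimate: for every $\gamma\in(0,n-2k)$ there is $C_0=C_0(\gamma,k,N,M)$ such that, for all admissible $\F=\{p_1,\dots,p_N\}$ and all $x\ne y$ in $M-\F$,
$$\int_M d_g(x,z)^{2k-n}\,R_\F(z)^{-2k}\,\Phi_\gamma(z,y)\,dv_g(z)\ \le\ C_0\,\Phi_\gamma(x,y).$$
To prove it one writes $R_\F(z)^{-2k}\le\sum_i d_g(z,p_i)^{-2k}$ and $R_\F(z)^{-\gamma}\le\sum_j d_g(z,p_j)^{-\gamma}$, reduces to finitely many integrals of products of at most four distance powers (singular at $x$, $y$, and two poles), and then decomposes $M$ into regions according to which of $d_g(x,z)$, $d_g(z,y)$ controls $d_g(x,y)$ and to the position of $z$ relative to each $p_i$ and to $x,y$. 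On each region one invokes the elementary bound $\int_M d_g(a,z)^{-\alpha}d_g(b,z)^{-\beta}\,dv_g(z)\le C\,d_g(a,b)^{n-\alpha-\beta}$ (Giraud's Lemma) and its variants. The constraint $\gamma<n-2k$ is exactly what makes the integral converge near each pole (so that $2k+\gamma<n$), while $\gamma>0$ is needed for the ``$\max$'' shape of $\Phi_\gamma$ to be reproduced rather than generating a logarithm; the surviving constant involves only the exponents, $N$ and $(M,g)$, never the positions of the $p_i$'s.

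Granting this, an immediate induction gives $|G_m|\le C_P(C_0\lambda)^m\Phi_\gamma$, so for $\lambda\le\lambda_\gamma:=\min\{\lambda_0(k,L),(2C_0)^{-1}\}$ the series converges with $|\sum_m G_m|\le 2C_P\Phi_\gamma$, and one identifies its sum with $G$ by the uniqueness part of Theorem \ref{th:Green:main} (for $\gamma<\tfrac{n-2k}{2}$ the weight $\Phi_\gamma$ is locally $L^{\crit}$ away from $x$, which is what the identification requires; for larger $\gamma$ one first runs the argument with a small exponent and then uses $\Phi_{\gamma'}\le\Phi_\gamma$). This proves \eqref{ineq:funda} for $l_1=l_2=0$. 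To pass to derivatives, fix $x\ne y$ in $M-\F$: because the coefficients of $P$ depend on one variable only, $P-V$ is symmetric and $G$ is its Green's function, so $G(x,\cdot)$, $G(\cdot,y)$ and their mixed derivatives solve $(P-V)(\cdot)=0$ on $M-(\{x,y\}\cup\F)$. On $B_{\rho_y}(y)$ with $\rho_y:=\tfrac14\min\{d_g(x,y),R_\F(y)\}$ one has $|V|\le C\lambda\rho_y^{-2k}$, $R_\F\asymp R_\F(y)$ and $d_g(x,\cdot)\asymp d_g(x,y)$; rescaling $B_{\rho_y}(y)$ to the unit ball turns the equation into a uniformly elliptic one with uniformly bounded coefficients, so interior $W^{2k,p}$-estimates followed by Sobolev embedding ($p>n$) give $|\nabla_y^{l_2}G(x,y)|\le C\rho_y^{-l_2}\sup_{B_{\rho_y}(y)}|G(x,\cdot)|\le C\rho_y^{-l_2}\Phi_\gamma(x,y)$ by the bound just obtained. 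Applying the same argument in the $x$ variable to $z\mapsto\nabla_y^{l_2}G(z,y)$ on $B_{\rho_x}(x)$, $\rho_x:=\tfrac14\min\{d_g(x,y),R_\F(x)\}$, yields $|\nabla_x^{l_1}\nabla_y^{l_2}G(x,y)|\le C\rho_x^{-l_1}\rho_y^{-l_2}\Phi_\gamma(x,y)$; since $\rho_x^{-l_1}\le C\max\{d_g(x,y)^{-l_1},R_\F(x)^{-l_1}\}$ and likewise for $\rho_y$, a short bookkeeping gives $\rho_x^{-l_1}\rho_y^{-l_2}\Phi_\gamma(x,y)\le C\,d_g(x,y)^{2k-n-l_1-l_2}\max\{1,(d_g(x,y)/R_\F(x))^{\gamma+l_1}(d_g(x,y)/R_\F(y))^{\gamma+l_2}\}$, which is \eqref{ineq:funda}.

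The main obstacle is the uniform-in-$\F$ Giraud estimate above: nesting the ``$\min$ over poles'' defining $R_\F$ with the ``$\max$'' in $\Phi_\gamma$ forces an intricate region decomposition, and one must verify in every borderline configuration — collapsing poles, or $x$ or $y$ near a pole — that no logarithmic factor appears and that the constant stays independent of the pole locations. Everything else is a lengthy but routine combination of interior elliptic estimates and Giraud's Lemma.
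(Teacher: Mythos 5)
Your route is genuinely different from the paper's. You propose a perturbative scheme: expand $G$ in a Neumann series in the free Green's function $G_P$ and absorb the multi-pole Hardy potential through a weighted Giraud-type kernel estimate against your weight $\Phi_\gamma$. The paper instead proves \eqref{ineq:funda} by contradiction and rescaling: it first gets the estimate at fixed mutual distance via the regularity Lemma \ref{lem:bis} and duality (inequality \eqref{ineq:G:1}), then proves scale-invariant versions of it near the pole set (Theorem \ref{th:G:close}) and far from it (Theorem \ref{th:G:far}), and finally excludes a blowing-up sequence $(x_l,y_l)$ by splitting into the three regimes $d_g(x_l,y_l)\not\to 0$, $d_g(x_l,y_l)=o(R_{\F}(x_l))$, and $R_{\F}(x_l)=O(d_g(x_l,y_l))$. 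If your kernel estimate is established, your scheme has the merit of giving the $l_1=l_2=0$ bound in one stroke and of never invoking blow-up arguments for the Green's function itself; the paper's scheme never needs a convolution inequality against $\Phi_\gamma$ and concentrates the hard analysis in Lemma \ref{lem:bis}, which is reused at all scales.

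Two points are, as written, genuine gaps. First, the uniform-in-$\F$ estimate $\int_M d_g(x,z)^{2k-n}R_{\F}(z)^{-2k}\Phi_\gamma(z,y)\,dv_g(z)\le C_0\,\Phi_\gamma(x,y)$ is the entire analytic content of your proof, and you only assert it together with a strategy. It is plausible and in fact sharp (when $y$ lies at distance $\eta\ll d_g(x,y)$ from a pole, the two sides are comparable), but the borderline configurations must be written out: for instance, on the branch where the max equals $1$ and a pole $p_i$ sits at distance $r\ll d_g(x,y)$ from $x$, the naive Giraud exponents sum exactly to $n$ and produce a factor $\log(d_g(x,y)/r)$, which is admissible only because $R_{\F}(y)\le d_g(x,y)+r$ forces the max-term of $\Phi_\gamma(x,y)$ to dominate it; a complete case analysis of this kind (pairs of poles, collapsing poles, $x$ or $y$ near a pole) is comparable in length to the paper's Sections 5--8 and cannot be labelled routine. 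Second, the resolvent identity $G=G_P+G_P\!*\!(VG)$ and the identification of the series sum with $G$ do not follow from a direct appeal to Theorem \ref{th:Green:main}: $G(\cdot,y)\notin H_k^2(M)$, and for $\varphi\in H_k^2(M)$ one only controls $|V\varphi|\le\lambda R_{\F}^{-2k}|\varphi|$, which with $\varphi\in L^{\crit}$ lies in $L^q$ for every $q<\frac{2n}{n+2k}$ but may miss the endpoint space $L^{\frac{2n}{n+2k}}(M)$ required by the representation formula; so the mollification-of-$\delta_y$ argument does not close as stated. The natural repair is the one the paper's construction already supplies: run your absorption argument on the regularized Green's functions $G_\eps$ of $P-V_\eps$ with $V_\eps\in L^\infty$ as in \eqref{hyp:Ve}, for which the resolvent identity is standard and the best constant $A_\eps$ in $|G_\eps|\le A_\eps\Phi_\gamma$ is finite a priori, obtain $A_\eps\le C_P+C C_0\lambda A_\eps$ uniformly in $\eps$, and let $\eps\to 0$ via \eqref{lim:Ge}; this step is absent from your proposal. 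Your derivative step (rescaled interior estimates on balls of radius comparable to $\min\{d_g(x,y),R_{\F}\}$) is sound, including the final bookkeeping, because $|R_{\F}(x)-R_{\F}(y)|\le d_g(x,y)$ renders the apparent exceptional case harmless.
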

We fix $k\in\nn$ such that $2\leq 2k<n$ and $L>0$. We consider an operator $P\in O_{k,L}$ (see Definition \ref{def:p}) and $N\geq 1$. Here and in the sequel,
\begin{equation*}
 B_{r}(\F):=\bigcup_{p\in\F} B_{r}(p)\hbox{ for all }\F\subset M^N\hbox{ and }r>0.
 \end{equation*}
The Hardy inequality yields $C_H(k)>0$  such that for all $p\in M$,
\begin{equation*}
\int_M\frac{u^2\, dv_g}{d_g(x,p)^{2k}} \leq C_H(k)\Vert u\Vert_{H_k^2}^2\hbox{ for all }u\in H_k^2(M)\hbox{ and all }p\in M.
\end{equation*}
Note that the constant is independent of the choice of $p\in M$. This is essentially a consequence of the Euclidean Hardy inequality by Mitidieri \cite{mitidieri}, see Robert \cite{robert:gjms} for the adaptation to the Riemannian setting.

\smallskip\noindent We  let $\eta\in C^\infty(\rr)$ be such that $\eta(t)=0$ for $t\leq 1$ and $\eta(t)=1$ for $t\geq 2$. As in \cite{robert:gjms}, there exists $\lambda_0=\lambda_0(k, L)$ such for all $(\F,V_0)\in \Pl(N) $ with $0<\lambda<\lambda_0$, then 
\begin{equation*}
\int_M (Pu-V_0 u)u\, dv_g\geq \frac{1}{2L}\Vert u\Vert_{H_k^2}^2\hbox{ for all }u\in H_k^2(M)
\end{equation*}
and defining $V_\eps(x):=\eta(R_\F(x)/\eps) V_0(x)$ for all $\eps>0$ and a.e. $x\in M$, we get that
\begin{equation}\label{hyp:Ve}
\left\{\begin{array}{cc}
\lim_{\eps\to 0}V_\eps(x)=V_0(x)&\hbox{ for a.e. }x\in \Mmx\\
(\F,V_\eps)\in \Pl(N)&\hbox{ for all }\eps>0\\
P-V_\eps&\hbox{ is uniformly coercive for all }\eps>0
\end{array}\right\}
\end{equation}
in the sense that
\begin{equation}\label{coer:PV}
\int_M (Pu-V_\eps u)u\, dv_g\geq \frac{1}{2L}\Vert u\Vert_{H_k^2}^2\hbox{ for all }u\in H_k^2(M)\hbox{ and }\eps>0.
\end{equation}
For any $\eps>0$,  we let $G_\eps$ be the Green's function for the operator $P-V_\eps$. Since $V_\eps\in L^\infty(M)$, the existence of $G_\eps$ follows from Theorem C.1 of Robert \cite{robert:gjms}.

\medskip\noindent{\bf Step 1: First pointwise control.} We choose $f\in C^{0}(M)$ and we fix $\eps>0$. Since $P-V_\eps$ is coercive, variational methods yield a unique  $\varphi_\eps\in H_k^2(M)$ such that 
$$(P-V_\eps)\varphi_\eps=f\hbox{ in }M\hbox{ in the weak sense.}$$
It follows from the second point of Definition   \ref{def:p} of $P\in O_{k,L}$ that $A^{(i)}\in C^{i}$ for all $i=0,...,2k-1$, it follows from elliptic regularity (\cite{ADN} or Theorem D.4 of \cite{robert:gjms}) and Sobolev's embedding theorem that $\varphi_\eps\in C^{2k-1}(M)\cap H_{2k}^p(M)$ for all $p>1$. The coercivity hypothesis \eqref{coer:PV} yields
\begin{eqnarray*}
\frac{1}{2L}\Vert \varphi_\eps\Vert_{H_k^2}^2\leq \int_M (P\varphi_\eps-V_\eps\varphi_\eps)\varphi_\eps\, dv_g=\int_M f\varphi_\eps\, dv_g\leq \Vert f\Vert_{\frac{2n}{n+2k}}\Vert\varphi_\eps\Vert_{\frac{2n}{n-2k}}.
\end{eqnarray*}
The Sobolev  inequality \eqref{sobo:ineq:M} yields
\begin{equation}\label{eq:14}
C_S(k)^{-1}\Vert\varphi_\eps\Vert_{\crit}\leq \Vert\varphi_\eps \Vert_{H_k^2}\leq 2LC_S(k)\Vert f\Vert_{\frac{2n}{n+2k}}
\end{equation}
for all $f\in C^{0}(M)$. We fix $p>1$ such that
$$\frac{n}{2k}<p<\frac{n}{2k-1}\hbox{ and }\theta_p:=2k-\frac{n}{p}\in (0,1).$$
We fix $\delta>0$. Since $(\F,V_\eps)\in \Pl(N)$ for all $\eps>0$ and $P\in O_{k,L}$, it follows from
regularity theory (\cite{ADN} or Theorem D.2 of \cite{robert:gjms}) that
\begin{eqnarray}
&&\Vert\varphi_\eps\Vert_{C^{0,\theta_p}(M -  B_\delta(\F))}\leq  C(p,\delta,k)\Vert \varphi_\eps\Vert_{H_{2k}^p(M -   B_\delta(\F))}\label{case:F:F0}\\
&&\leq C(p,\delta,k, L,\lambda_0) \left(\Vert f\Vert_{L^p(M -   B_{\delta/2}(\F))}+\Vert \varphi_\eps\Vert_{L^{\crit}(M -   B_{\delta/2}(\F))}\right).\nonumber
\end{eqnarray}
Indeed, this is a direct consequence of regularity theory when $p_1,...,p_N$ are in small neighborhoods of some fixed distinct points, and the general case goes by contradiction and compactness. With \eqref{eq:14} and noting that $\frac{n}{2k}>\frac{2n}{n+2k}$, we get that
\begin{equation*}
\Vert\varphi_\eps\Vert_{C^{0,\theta_p}(M -   B_\delta(\F))}\leq  C(p,\delta,k, L,\lambda_0) 
\Vert f\Vert_{L^p(M)}.
\end{equation*}
Since $\varphi_\eps\in H_{2k}^p(M)$ for all $p>1$, Green's representation formula  yields
\begin{equation}\label{eq:green:56}
\varphi_\eps(x)=\int_M G_\eps(x,y)f(y)\, dv_g(y)\hbox{ for all }x\in M- \{p_1,...,p_N\},
\end{equation}
and then when $R_{\F}(x)>\delta$, we get that
$$\left|\int_M G_\eps(x,y)f(y)\, dv_g(y)\right|\leq C(p,\delta,k, L,\lambda) 
\Vert f\Vert_{L^p(M)}$$
for all $f\in C^{0}(M)$ and $p\in \left(\frac{n}{2k},\frac{n}{2k-1}\right)$. Via duality, we then deduce that 
\begin{equation}\label{eq:16}
\Vert G_\eps(x,\cdot)\Vert_{L^q(M)}\leq C(q,\delta, k, L,\lambda_0)\hbox{ for all }q\in \left(1,\frac{n}{n-2k}\right)\hbox{ and }R_{\F}(x)>\delta. 
\end{equation}
We   fix $p\in M$ and $\gamma\in (0,n-2k)$. We let $\lambda_0=\lambda(\gamma,\crit, L,N,p,\delta)$ be given by Lemma \ref{lem:bis} and we take $0<\lambda<\lambda_0$. We take $f\in C^0(M)$ such that $f\equiv 0$ in $B_{\delta}(p)$, so that $(P-V_\eps)\varphi_\eps=0$ in $B_{\delta/2}(p)$. Since $(\F,V_\eps)\in \Pl(N)$ for all $\eps>0$ and $P\in O_{k,L}$, given $l_1\in\{0,...,2k-1\}$, the regularity Lemma \ref{lem:bis} yields $\lambda_\gamma>0$ such that for $\lambda<\lambda_\gamma$, we have that
\begin{eqnarray*}
R_{\F}(x)^{\gamma+l_1}|\nabla_x^{l_1}\varphi_\eps(x)|\leq C(p,\delta,k, L,\lambda_0) \Vert \varphi_\eps\Vert_{L^{\crit}(B_{\delta}(p))},
\end{eqnarray*}
for all $x\in B_{\delta/2}(p)- \{p_1,...,p_N\}$. With \eqref{eq:14}, we get that
\begin{equation*}
R_{\F}(x)^{\gamma+l_1}|\nabla_x^{l_1}\varphi_\eps(x)|\leq  C(p,\delta,k, L,\lambda_0) 
\Vert f\Vert_{L^{\frac{2n}{n+2k}}(M)}. 
\end{equation*}
With Green's representation \eqref{eq:green:56}, we then get that
$$\left|R_{\F}(x)^{\gamma+l_1}\int_M \nabla_x^{l_1}G_\eps(x,y)f(y)\, dv_g(y)\right|\leq C(p,\delta,k, L,\lambda) 
\Vert f\Vert_{L^{\frac{2n}{n+2k}}(M)}$$
for all $f\in C^{0}(M)$ vanishing in $B_{\delta}(p)$ and  $x\in B_{\delta/2}(p)- \{p_1,...,p_N\}$. Duality yields
\begin{equation}\label{eq:16:crit}
\Vert R_{\F}(x)^{\gamma+l_1} \nabla_x^{l_1} G_\eps(x,\cdot)\Vert_{L^{\crit}(M -  B_{\delta}(p))}\leq C(\delta, k, L,\lambda_0)
\end{equation}
for all $x\in B_{\delta/2}(p)- \{p_1,...,p_N\}$. For all $\eps>0$, we have that 
\begin{equation}\label{eq:Ge}
P G_\eps(x,\cdot)-V_\eps G_\eps(x,\cdot)=0\hbox{ in }M-\{x\}.
\end{equation}
It follows from the regularity Lemma \ref{lem:bis} (note that $V_\eps\in L^\infty(M)$) that there exists $C(\delta, k, L,\gamma,\lambda_0)>0$ such that for all $l_2=0,...,2k-1$,
\begin{equation*}
R_{\F}(y)^{\gamma+l_2} R_{\F}(x)^{\gamma+l_1} |\nabla_y^{l_2}\nabla_x^{l_1} G_\eps(x,y)|\leq C(\delta, k, L,\gamma,\lambda_0)
\end{equation*}
for all $x\in B_{\delta/2}(p)$ and $y\in M -  B_{2\delta}(p)$, $x\neq y$ and $x,y\not\in  \{p_1,...,p_N\}$. Via a finite covering,  there exists $\lambda_\gamma>0$ such that for $\lambda<\lambda_\gamma$, we have that for any $\delta>0$, there exists $C(\delta, k, L,\gamma,\lambda_0)>0$ such that 
\begin{equation}\label{ineq:G:1}
R_{\F}(y)^{\gamma+l_2} R_{\F}(x)^{\gamma+l_1} |\nabla_y^{l_2}\nabla_x^{l_1} G_\eps(x,y)|\leq C(\delta, k, L,\gamma,\lambda_0)
\end{equation}
for all $x,y\in M-\{p_1,...,p_N\}$ such that $d_g(x,y)\geq\delta$. 

\medskip\noindent{\bf Step 2: passing to the limit $\eps\to 0$ and Green's function for $P-V_0$.}\par
\noindent We fix $x\in M-\{p_1,...,p_N\}$. With \eqref{eq:Ge} and \eqref{ineq:G:1}, Ascoli's theorem yields $G_0(x,\cdot)\in C^{2k-1}(M-\{x,p_1,...,p_N\})$ such that, up to extraction,
\begin{equation}\label{lim:Ge}
\lim_{\eps\to 0}G_\eps(x,\cdot)=G_0(x,\cdot)\hbox{ in }C^{2k-1}_{loc}(M-\{x,p_1,...,p_N\}).
\end{equation}
By elliptic regularity again, we also get that
\begin{equation*}
\lim_{\eps\to 0}G_\eps(x,\cdot)=G_0(x,\cdot)\hbox{ in }H^p_{2k,loc}(M-\{x,p_1,...,p_N\})\hbox{ for all }p>1.
\end{equation*}
Via the monotone convergence theorem, passing to the limit in \eqref{eq:16}, we get that 
\begin{equation}\label{eq:17}
\Vert G_0(x,\cdot)\Vert_{L^q(M)}\leq C(q,\delta, k, L,\lambda_0)\hbox{ for all }q\in \left(1,\frac{n}{n-2k}\right)\hbox{ and }R_\F(x)>\delta,
\end{equation}
and then $G_0(x,\cdot)\in L^q(M)$ for all $q\in \left(1,\frac{n}{n-2k}\right)$ and $x \in M- \{p_1,...,p_N\}$. Similarly, using \eqref{eq:16:crit}, we get that
\begin{equation*}
\Vert G_0(x,\cdot)\Vert_{L^{\crit}(M -  B_{\delta}(x))}\leq C(\delta, k, L,\lambda_0)\hbox{ when }R_\F(x)>\delta. 
\end{equation*}
So that $G_0(x,\cdot)\in L^{\crit}_{loc}(M - \{x\})$ for all $x\in M-\{p_1,...,p_N\}$. Passing to the limit $\eps\to 0$ in \eqref{ineq:G:1} yields
\begin{equation}\label{ineq:G:2}
R_{\F}(y)^{\gamma+l_2} R_{\F}(x)^{\gamma+l_1} |\nabla_y^{l_2}\nabla_x^{l_1} G_0(x,y)|\leq C(\delta, k, L,\lambda_0)
\end{equation}
for all $x,y\in M-\{p_1,...,p_N\}$ such that $d_g(x,y)\geq\delta$.

\medskip\noindent{\bf Step 3: Representation formula.} We fix $f\in L^{\frac{2n}{n+2k}}(M)\cap L^p_{loc}(\Mmx)$, $p>\frac{n}{2k}>1$. Via the coercivity of $P-V_\eps$ and $P-V_0$ and using that $V_\eps\in L^\infty(M)$ for all $\eps>0$, it follows from variational methods (\cite{ADN} or Theorem D.4 of \cite{robert:gjms}) that there exists  $\varphi_\eps\in H_k^2(M)\cap H_{2k}^{\frac{2n}{n+2k}}(M)$ and $\varphi_0 \in H_k^2(M)$ such that 
\begin{equation}\label{eq:phieps:phi0}
(P-V_0)\varphi_0=f\hbox{ and }(P-V_\eps)\varphi_\eps=f\hbox{ in }M.
\end{equation}
As one checks (see for instance \cite{robert:gjms} for details), we have that
\begin{equation}\label{lim:phi:eps}
\lim_{\eps\to 0}\varphi_\eps=\varphi_0\hbox{ in }H_k^2(M).
\end{equation}
Since $\varphi_\eps\in H_{2k}^{\frac{2n}{n+2k}}(M)$ is a solution to \eqref{eq:phieps:phi0}, $P\in O_{k,L}$, $V_\eps\in \Pl(N)$ and $f\in L^p_{loc}(M -  \{p_1,...,p_N\})$, $p>\frac{n}{2k}$, it follows from regularity theory (\cite{ADN} or Theorems D.1 and D.2 of \cite{robert:gjms}) that $\varphi_\eps\in H_{2k,loc}^p(M - \{p_1,...,p_N\})$ and that for any $\delta>0$, using \eqref{lim:phi:eps} and as in \eqref{case:F:F0}, we get that
\begin{eqnarray*}
\Vert \varphi_\eps\Vert_{H_{2k}^p(M -  B_r(\F))}&\leq& C( r, k,L,\lambda_0)\left(\Vert f\Vert_{L^p(M -  B_{r/2}(\F))}+\Vert \varphi_\eps\Vert_{L^{\crit}(M -  B_{r/2}(\F))}\right)\\
&\leq& C( r, k,L,\lambda_0, f).
\end{eqnarray*}
Since $p>n/2k$, it follows from Sobolev's embedding theorem that $\varphi_\eps\in C^0(M - \{p_1,...,p_N\})$ and that
\begin{equation*}
\Vert \varphi_\eps\Vert_{C^0(M -  B_r(\F))}\leq C(k,r)\Vert \varphi_\eps\Vert_{H_{2k}^p(M -  B_r(\F))}\leq C( r, k,L,\lambda_0,f)
\end{equation*}
for all $\eps>0$. As one checks, (see again \cite{robert:gjms} for details, we then get that $\varphi_0\in C^0(M - \{p_1,...,p_N\})$ and 
\begin{equation}\label{lim:C0}
\lim_{\eps\to 0}\varphi_\eps=\varphi_0\hbox{ in }C^0_{loc}(\Mmx).
\end{equation}
With \eqref{eq:16}, \eqref{lim:Ge}, \eqref{eq:17}, \eqref{eq:16:crit}, \eqref{lim:C0}, passing to the limit in \eqref{eq:green:56} yields
$$\varphi_0(x)=\int_{M}G_0(x,\cdot)f\, dv_g.$$
This yields the existence of a Green's function for $P-V_0$ in Theorem \ref{th:Green:main}. Uniqueness goes as in \cite{robert:gjms}. This ends the proof of Theorem \ref{th:Green:main}.

\section{Asymptotics for the Green's function close to the singularity}\label{sec:green2}
This section is devoted to the proof of infinitesimal versions of \eqref{ineq:G:1} and \eqref{ineq:G:2} when  $x,y$ are close to the singular set $\F$.

\begin{theorem}\label{th:G:close} Let $(M,g)$ be a compact Riemannian manifold of dimension $n$. Fix $k\in\nn$ such that $2\leq 2k<n$, $L>0$, $\lambda>0$ and $N\geq 1$ an integer. Fix an operator $P$ of type $O_{k,L}$ (see Definition \ref{def:p}), $(\F,V_0)\in \Pl(N)$ where $\F:=\{p_1,...,p_N\}$ and a family $(V_\eps)$ as in \eqref{hyp:Ve}. For $\lambda>0$ sufficiently small, let $G_0$ (resp. $G_\eps$) be the Green's function for $P-V_0$ (resp. $P-V_\eps$). 

\smallskip\noindent Let us fix $U, V$ two open subsets of $\rn$ such that $ \overline{U}\cap\overline{V}=\emptyset$. We let $\mu_0:=\mu_0(M,g,U,V)>0$ be such that $|\mu X|<i_g(M)/4$ for all $0<\mu<\mu_0$ and $X\in U\cup V$.  

\smallskip\noindent We fix $i\in\{1,...,N\}$. We let $J_i:=\{j\in \{1,...,N\}/\, d_g(p_j,p_i)<i_g(M)/2\}$ (note that $i\in I_i$). For $j\in J_i$, we define $\tilde{p}_{j}:=\frac{\hbox{exp}_{p_i}^{-1}(p_j)}{\mu}$ and 
$$\tilde{R}_{i,\F}(X):=\inf_{j\in J_i}|X-\tilde{p}_j|\hbox{ for all }X\in\rn.$$
We fix $\gamma\in (0, n-2k)$. Then there exists $\lambda=\lambda(\gamma)>0$, there exists $C=C(U,V, M,g,\lambda, k, L,N,\gamma)>0$ such that for any $0\leq l_1,l_2\leq 2k-1$, 
\begin{equation}\label{ineq:G:close}
\left| \tilde{R}_{i,\F}(X)^{\gamma+l_1} \tilde{R}_{i,\F}(Y)^{\gamma+l_2} \mu^{n-2k+l_1+l_2}\nabla_X^{l_1}\nabla_Y^{l_2}G_\eps( \hbox{exp}_{p_i}(\mu X),\hbox{exp}_{p_i}(\mu Y) )\right|\leq C
\end{equation}
for all $X\in U-\{\tilde{p}_j/\, j\in J_i\}$, $Y\in V-\{\tilde{p}_j/\, j\in J_i\}$, $\mu\in (0,\mu_0)$ and $\eps\geq 0$.
\end{theorem}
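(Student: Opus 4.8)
The plan is to reduce the infinitesimal estimate \eqref{ineq:G:close} to the global estimate \eqref{ineq:G:1} of the previous section by rescaling the manifold near $p_i$. First I would introduce the rescaled metric $g_\mu:=\mu^{-2}\hbox{exp}_{p_i}^\star g(\mu\cdot)$ on the ball $B_{i_g(M)/(4\mu)}(0)\subset\rn$; since $\hbox{exp}_{p_i}$ is a normal chart, $g_\mu\to\xi$ in $C^{2k}_{loc}(\rn)$ as $\mu\to 0$, uniformly once we take into account the smooth dependence of the chart on the base point. Under this rescaling the operator $P-V_\eps$ pulls back to an operator $\tilde{P}_{\mu,\eps}-\tilde{V}_{\mu,\eps}$ on $(B_{i_g(M)/(4\mu)}(0),g_\mu)$ where $\tilde{P}_{\mu,\eps}=\Delta_{g_\mu}^k+\sum_{i}(-1)^i\nabla^i(\tilde{A}^{(i)}_{\mu,\eps}\nabla^i)$ has coefficients bounded in $C^{i,\theta}$ uniformly in $\mu,\eps$ (they inherit the bounds from $P\in O_{k,L}$ with positive powers of $\mu$ in front of the lower-order terms), and $\tilde{V}_{\mu,\eps}(X)=\mu^{2k}V_\eps(\hbox{exp}_{p_i}(\mu X))$. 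The key point is that the Hardy bound transports: $|\tilde{V}_{\mu,\eps}(X)|\le\lambda\,\tilde{R}_{i,\F}(X)^{-2k}$ on the rescaled ball, because $d_{g_\mu}(X,\tilde{p}_j)\asymp|X-\tilde{p}_j|$ (from \eqref{lem:lip}) and distances to the points $p_j$ with $j\notin J_i$ are bounded below by $i_g(M)/(2\mu)\to\infty$, hence irrelevant on $U\cup V$.

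The second step is to note that, by the scaling law of the Green's function,
\begin{equation*}
\tilde{G}_{\mu,\eps}(X,Y):=\mu^{n-2k}G_\eps(\hbox{exp}_{p_i}(\mu X),\hbox{exp}_{p_i}(\mu Y))
\end{equation*}
is (up to the conformal/volume factors coming from $g_\mu$, which are $1+O(\mu^2|X|^2)$ and harmless) the Green's function on $(B_{i_g(M)/(4\mu)}(0),g_\mu)$ of the rescaled operator, restricted to the singular set $\tilde{\F}_\mu:=\{\tilde{p}_j/\,j\in J_i\}$. Since $\overline{U}\cap\overline{V}=\emptyset$ there is $\delta_0=\delta_0(U,V)>0$ with $|X-Y|\ge\delta_0$ for all $X\in U,Y\in V$, so the pair $(X,Y)$ stays at definite distance in the $g_\mu$-metric as well. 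I would then invoke \eqref{ineq:G:1} — more precisely, the version proved in Section~\ref{sec:green1} for operators in $O_{k,L}$ with a potential in $\Pl(N)$ satisfying $R_\F^{2k}|V|\le\lambda$ — applied on the rescaled ball with $\delta:=\delta_0$. This gives, for $\lambda<\lambda_\gamma$,
\begin{equation*}
\tilde{R}_{i,\F}(Y)^{\gamma+l_2}\tilde{R}_{i,\F}(X)^{\gamma+l_1}|\nabla_Y^{l_2}\nabla_X^{l_1}\tilde{G}_{\mu,\eps}(X,Y)|\le C(\delta_0,k,L,\gamma,\lambda_0,N),
\end{equation*}
which is exactly \eqref{ineq:G:close}.

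There are two points requiring care, and I expect the compactness/uniformity to be the main obstacle. First, the manifold $(B_{i_g(M)/(4\mu)}(0),g_\mu)$ is not compact and changes with $\mu$, so one cannot literally quote a theorem stated on a fixed compact manifold; the fix is the standard ``local'' version of the elliptic and Green's-function estimates, exactly as in the contradiction-and-compactness argument already used to prove \eqref{case:F:F0} and \eqref{ineq:G:1} (one works on a fixed large ball, the constant in \eqref{ineq:G:1} depending only on $\delta$, the ellipticity constant, the $C^{i,\theta}$-bounds on the coefficients, $\lambda$ and $N$ — all uniform in $\mu$ and $\eps$ by the previous paragraph). Second, one must verify that the localized operator $\tilde{P}_{\mu,\eps}$ is uniformly coercive on the scale-invariant Sobolev norm; this follows because coercivity of $P-V_\eps$ on $H_k^2(M)$ is scale invariant — multiplying a test function supported in a small geodesic ball by the rescaling and using that $g_\mu$ is $C^{2k}$-close to $\xi$ preserves the inequality with a uniform constant, again as in \cite{robert:gjms}. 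The passage $\eps\to 0$ is then free: the estimate is uniform in $\eps\ge 0$, and for $\eps=0$ it follows by the convergence \eqref{lim:Ge}. Once these uniformity statements are in place, the proof is just the change of variables and the invocation of \eqref{ineq:G:1}.
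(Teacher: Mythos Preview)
Your reduction has a genuine gap at the step ``$\tilde{G}_{\mu,\eps}$ is the Green's function on $(B_{i_g(M)/(4\mu)}(0),g_\mu)$ of the rescaled operator, so apply \eqref{ineq:G:1}''. The rescaled object $\tilde{G}_{\mu,\eps}(X,\cdot)$ does satisfy the rescaled PDE away from $X$, but it is \emph{not} the Green's function of any well-posed problem on the ball: there are no boundary conditions, the domain is not a closed manifold, and $\tilde{G}_{\mu,\eps}$ carries global information from $M$ that is invisible after restriction. More concretely, the proof of \eqref{ineq:G:1} is not local: it starts by solving $(P-V_\eps)\varphi_\eps=f$ on all of $M$ and uses the global coercivity \eqref{coer:PV} on $H_k^2(M)$ together with Sobolev on $M$ to obtain the crucial a priori bound \eqref{eq:14}, and only \emph{then} localises via Lemma~\ref{lem:bis} and duality. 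None of these inputs survive on the varying, noncompact rescaled balls (coercivity in $H_k^2$ is not scale-invariant --- the lower-order terms acquire factors $\mu^{2k-l}$ and vanish in the limit, so you lose control of the $L^2$ part of the norm), and the ``local version'' you invoke would itself have to be proved; that is precisely the content of Theorem~\ref{th:G:close}. Applying \eqref{ineq:G:1} directly on $M$ does not help either: with $x=\hbox{exp}_{p_i}(\mu X)$, $y=\hbox{exp}_{p_i}(\mu Y)$ one has $d_g(x,y)\asymp\mu\delta_0\to 0$, so the constant $C(\delta,\dots)$ in \eqref{ineq:G:1} blows up and the powers of $\mu$ do not match.

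The paper avoids this by never leaving $M$. It fixes $f\in C^\infty_c(V')$, pushes it forward to $f_\mu\in C^\infty(M)$, solves $(P-V_\eps)\varphi_{\mu,\eps}=f_\mu$ on the compact manifold (where coercivity and Green's representation are available), and obtains the scale-invariant bound $\Vert\varphi_{\mu,\eps}\Vert_{L^{\crit}(M)}\le C\Vert f\Vert_{L^{2n/(n+2k)}(V')}$. Only then does it rescale $\varphi_{\mu,\eps}$ and apply the purely local regularity Lemma~\ref{lem:bis} (which \emph{is} available on arbitrary domains) to get pointwise control of $\tilde\varphi_{\mu,\eps}$ on $U$; duality against $f$ converts this into an $L^{\crit}(V')$ bound on $\tilde R_{i,\F}(X)^{\gamma+l_1}\nabla_X^{l_1}\tilde G_{\mu,\eps}(X,\cdot)$, and a second application of Lemma~\ref{lem:bis} in the $Y$-variable finishes. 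The missing idea in your plan is this duality step, which manufactures the initial integral bound on $\tilde G_{\mu,\eps}$ that any elliptic bootstrap needs to start.
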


\noindent{\it Proof of Theorem \ref{th:G:close}.} We first take $\eps>0$ small in order to have that $V_\eps\in L^\infty(M)$: we will pass to the limit $\eps\to 0$ at the end of the argument. We first set $U',V'$ two open subsets of $\rn$ such that
\begin{equation*}
 U\subset\subset U'\subset\subset \rn \, ,\, V\subset\subset V'\subset\subset \rn\hbox{ and }\overline{U'}\cap\overline{V'}=\emptyset,
\end{equation*}
and $\mu_1:=\mu_1(M,g,U',V')$ be such that $|\mu X|<i_g(M)/4$ for all $0<\mu<\mu_1$ and $X\in U'\cup V'$. Note that for $\mu\in [\mu_1,\mu_0)$, \eqref{ineq:G:close} for $\eps>0$ is a consequence of \eqref{ineq:G:1}. We fix $f\in C^\infty_c(V')$ and for any $0<\mu<\mu_1$, we set 
$$f_\mu(x):=\frac{1}{\mu^{\frac{n+2k}{2}}}f\left(\frac{\hbox{exp}_{p_i}^{-1}(x)}{\mu}\right)\hbox{ for all }x\in M.$$
As one checks, $f_\mu\in C^\infty_c(\hbox{exp}_{p_i}(\mu V'))\subset C^\infty(M)$. Since $V_\eps\in L^\infty(M)$, it follows from elliptic regularity (\cite{ADN} or Theorem D.4 of \cite{robert:gjms})  that there exists $\varphi_{\mu,\eps}\in H_{2k}^q(M)$ for all $q>1$ such that
\begin{equation}\label{eq:phi:alpha}
P\varphi_{\mu,\eps}-V_\eps\varphi_{\mu,\eps}=f_\mu\hbox{ in }M.
\end{equation}
It follows from Sobolev's embedding theorem that $\varphi_{\mu,\eps}\in C^{2k-1}(M)$. We define
\begin{equation*}
\tilde{\varphi}_{\mu,\eps}(X):= \mu^{\frac{n-2k}{2}}\varphi_{\mu,\eps}\left(\hbox{exp}_{p_i}(\mu X)\right)\hbox{ for all }X\in \rn ,\, |\mu X|<i_g(M).
\end{equation*}
A change of variable and upper-bounds for the metric yield
\begin{eqnarray*}
\Vert f_\mu\Vert_{\frac{2n}{n+2k}}^{\frac{2n}{n+2k}}&=&\int_M |f_\mu(x)|^{\frac{2n}{n+2k}}\, dv_g=\int_{\hbox{exp}_{p_i}(\mu V')} |f_\mu(x)|^{\frac{2n}{n+2k}}\, dv_g\\
&=& \int_{V'} |f(X)|^{\frac{2n}{n+2k}}\, dv_{g_\mu}(X)\leq C\int_{V'} |f(X)|^{\frac{2n}{n+2k}}\, dX,
\end{eqnarray*}
where $\hbox{exp}_p^\star g$ denotes the pull-back metric of $g$ and  $g_\mu:=(\hbox{exp}_{p_i}^\star g)(\mu\cdot)$. Therefore
\begin{equation}\label{eq:31}
\Vert f_\mu\Vert_{L^{\frac{2n}{n+2k}}(M)}\leq C(k)\Vert f\Vert_{L^{\frac{2n}{n+2k}}(V')},
\end{equation}
where $C(k)$ depends only on $(M,g)$ and $k$. With \eqref{coer:PV}, \eqref{eq:phi:alpha} and the Sobolev inequality \eqref{sobo:ineq:M}, we get that
\begin{eqnarray*}
&&\frac{1}{2L}\Vert \varphi_{\mu,\eps}\Vert_{H_k^2(M)}^2\leq \int_M \varphi_{\mu,\eps}(P-V_\eps)\varphi_{\mu,\eps}\, dv_g=\int_M f_\mu\varphi_{\mu,\eps} \, dv_g\\
&&\leq  \Vert f_\mu\Vert_{L^{\frac{2n}{n+2k}}(M)} \Vert \varphi_{\mu,\eps}\Vert_{L^{\frac{2n}{n-2k}}(M)}\leq C_S(k) \Vert f_\mu\Vert_{L^{\frac{2n}{n+2k}}(M)} \Vert \varphi_{\mu,\eps}\Vert_{H_k^2(M)}.
\end{eqnarray*}
Therefore, using again the Sobolev inequality \eqref{sobo:ineq:M} and \eqref{eq:31}, we get that
\begin{equation}\label{eq:33}
\Vert \varphi_{\mu,\eps}\Vert_{L^{\frac{2n}{n-2k}}(M)}\leq C(k,L) \Vert f\Vert_{L^{\frac{2n}{n+2k}}(V')}.
\end{equation}
With $g_\mu$ as above, equation \eqref{eq:phi:alpha} rewrites
\begin{equation}\label{eq:tpl}
\Delta_{g_\mu}^k\tilde{\varphi}_{\mu,\eps}+\sum_{l=0}^{2k-2}\mu^{2k-l}B_l(\hbox{exp}_{p_i}(\mu \cdot))\star\nabla_{g_\mu}^l\tilde{\varphi}_{\mu,\eps}-\mu^{2k}V_\eps(\hbox{exp}_{p_i}(\mu X))\tilde{\varphi}_{\mu,\eps}=f
\end{equation}
weakly locally in $\rn$ where the $B_l$'s are $(l,0)-$tensors that are bounded in $L^\infty$ due to Definition \ref{def:p}. Since $V_\eps$ satisfies \eqref{hyp:Ve}, using \eqref{lem:lip}, for $\mu<\tilde{\mu}_1$ small, we have that 
\begin{equation*}
\left|\mu^{2k}V_\eps(\hbox{exp}_{p_i}(\mu X))\right|\leq 2^{2k}\lambda \tilde{R}_{i,\F}(X)^{-2k}\hbox{ for all }X\in U'-\{\tilde{p}_j/\, j\in J_i\}.
\end{equation*}
Since $f(X)=0$ for all $X\in U'$, $U\subset\subset U'$ and $\tilde{\varphi}_{\mu,\eps}\in H_{2k,loc}^q(\rn)$, it follows from the regularity Lemma \ref{lem:bis} that there exists $\lambda=\lambda(\gamma)>0$, there exists $C( L, \gamma,U, U')>0$ such that for any $0\leq l_1\leq 2k-1$, we have that
\begin{equation}\label{eq:44}
\tilde{R}_{i,\F}(X)^{\gamma+l_1} \left|\nabla^{l_1}\tilde{\varphi}_{\mu,\eps}(X)\right|\leq 
C( L,\gamma, U,U')\Vert \tilde{\varphi}_{\mu,\eps} \Vert_{L^{\crit}(U')}
\end{equation}
for all $X\in U-\{\tilde{p}_j/\, j\in J_i\}$. Arguing as in the proof of \eqref{eq:31}, we have that
\begin{equation}\label{eq:32}
\Vert \tilde{\varphi}_{\mu,\eps}\Vert_{L^{\crit}(U')}\leq C(k)\Vert  \varphi_{\mu,\eps}\Vert_{L^{\frac{2n}{n-2k}}(M)}.
\end{equation}
For $\mu>0$, we define
\begin{equation}\label{def:G:t:close}
\tilde{G}_{\mu,\eps}(X,Y):=\mu^{n-2k}G_\eps(\hbox{exp}_{p_i}(\mu X),\hbox{exp}_{p_i}(\mu Y) )\hbox{ for }(X,Y)\in U'\times V',
\end{equation}
$X,Y\not\in\{\tilde{p}_j/\, j\in J_i\}$.  Green's representation formula for $G_\eps$, $\eps>0$, and \eqref{eq:phi:alpha} yield
$$\varphi_{\mu,\eps}\left(\hbox{exp}_{p_i}(\mu X)\right)=\int_M G_\eps\left(\hbox{exp}_{p_i}(\mu X),y\right)f_\mu(y)\, dv_g(y)$$
for all $X\in U-\{\tilde{p}_j/\, j\in J_i\}$. 
With a change of variable, we then get that
\begin{equation}\label{ineq:G:b}
\tilde{\varphi}_{\mu,\eps}(X)=\int_{V'} \tilde{G}_{\mu,\eps}(X,Y) f(Y)\, dv_{g_\mu}
\end{equation}
for all $X\in U-\{\tilde{p}_j/\, j\in J_i\}$. Putting together   \eqref{eq:33}, \eqref{eq:44} and \eqref{eq:32}  and \eqref{ineq:G:b}, we get that
\begin{equation*}
\left|\tilde{R}_{i,\F}(X)^{\gamma+l_1}\int_{U'} \nabla_X^{l_1}\tilde{G}_{\mu,\eps}(X,Y) f(Y)\, dv_{g_\mu}
\right|\leq C( L, \delta, \lambda,\gamma,U,U',V') \Vert f\Vert_{L^{\frac{2n}{n+2k}}(V')}\end{equation*}
for all $f\in C^\infty_c(V')$ and $ X\in U-\{\tilde{p}_j/\, j\in J_i\}$. Duality arguments yield
\begin{equation}\label{ineq:35}
\Vert \tilde{R}_{i,\F}(X)^{\gamma+l_1}\nabla_X^{l_1} \tilde{G}_{\mu,\eps}(X,\cdot)\Vert_{L^{\crit}(V')}\leq C( L, \delta, \lambda,\gamma,U,U',V')
\end{equation}
for $X\in U-\{\tilde{p}_j/\, j\in J_i\}$. Since $G_\eps(x,\cdot)$ is a solution to $(P-V_\eps)G_\eps(x,\cdot)=0$ in $M-\{x, p_1,...,p_N\}$, as in \eqref{eq:tpl},
we get that
\begin{eqnarray*}
&&\Delta_{g_\mu}^k\tilde{G}_{\mu,\eps}(X,\cdot)+\sum_{l=0}^{2k-2}\mu^{2k-l}B_l(\hbox{exp}_{p_i}(\mu \cdot))\star\nabla_{g_\mu}^l\tilde{G}_{\mu,\eps}(X,\cdot)\\
&&-\mu^{2k}V_\eps(\hbox{exp}_{p_i}(\mu \cdot))\tilde{G}_{\mu,\eps}(X,\cdot)=0\hbox{ weakly in }V',
\end{eqnarray*}
and $\tilde{G}_{\mu,\eps}(X,\cdot)\in H_{2k,loc}^q(V')$ for some $q>1$ since $X\not\in V'$. As above, we have that $\left|\mu^{2k}V_\eps(\hbox{exp}_{p_i}(\mu Y))\right|\leq C \lambda \tilde{R}_{i,\F}(Y)^{-2k}$ for all $Y\in V'-\{\tilde{p}_j/\, j\in J_i\}$. The regularity Lemma \ref{lem:bis} then yields that there exists $C=C(k,L, \lambda,U,V,U',V')$ such that for any $0\leq l_2\leq 2k-1$, we have that
\begin{equation*}
\tilde{R}_{i,\F}(Y)^{\gamma+l_2} \tilde{R}_{i,\F}(X)^{\gamma+l_1} |\nabla_{Y}^{l_2}\nabla_{X}^{l_1}\tilde{G}_{\mu,\eps}(X,Y)|\leq C\Vert \tilde{R}_{i,\F}(X)^{\gamma+l_1} \nabla_X^{l_1}\tilde{G}_{\mu,\eps}(X,\cdot)\Vert_{L^{\crit}(V')}
\end{equation*}
for all $Y\in V-\{\tilde{p}_j/\, j\in J_i\}$ and $X\in U-\{\tilde{p}_j/\, j\in J_i\}$. The conclusion \eqref{ineq:G:close} for $\eps>0$ of Theorem \ref{th:G:close} then follows from this inequality, \eqref{ineq:35}, Definition \eqref{def:G:t:close} of $\tilde{G}_{\mu,\eps}$. The case $\eps=0$ then follows from \eqref{lim:Ge}. This proves Theorem \ref{th:G:close}.

\section{Asymptotics for the Green's function far from the singularity}\label{sec:green3}
This section is devoted to the proof of an infinitesimal version of \eqref{ineq:G:1} and \eqref{ineq:G:2} when $x,y$ are close to each other and far from the singularity set  $\F$. For any bounded domain $\Omega\subset \rn$, we let $R_\Omega>0$ the smallest real number such that $\Omega\subset B_{R_\Omega}(0)$.

\begin{theorem}\label{th:G:far} We fix $p\in M-\{p_1,...,p_N\}$ and $U,V$ two open subsets of $\rn$ such that $U\subset\subset \rn$, $V\subset\subset \rn$ and $\overline{U}\cap\overline{V}=\emptyset$. We define
\begin{equation}\label{hyp:alpha}
\mu<\mu_p:=\frac{\min\{i_g(M),R_{\F}(p)\}}{8R_U+8R_V+2}.
\end{equation}
Then for all $\gamma\in (0,n-2k)$ and $\tau\in (0,1)$, there exists $\lambda=\lambda(\gamma)>0$, there exists a constant $C=C(U,V, M,g,\lambda, k, L,N,\gamma)>0$ such that for any $0\leq l_1,l_2\leq 2k-1$, we have that
\begin{equation}\label{ineq:G:far}
\left|  \mu^{n-2k+l_1+l_2}\nabla_X^{l_1}\nabla_Y^{l_2}G_\eps(\hbox{exp}_{p}(\mu X),\hbox{exp}_{p}(\mu Y) )\right|\leq C
\end{equation}
for all $X\in U $ and $Y\in V$ and $0<\mu<\tau\mu_p$ and $\eps\geq 0$ small enough.
\end{theorem}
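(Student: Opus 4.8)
The plan is to run the blow-up-and-duality scheme of the proof of Theorem \ref{th:G:close}, but in the easier situation where no Hardy weight is needed: since the base point $p$ lies at distance $R_\F(p)>0$ from $\F$, rescaling at scale $\mu\to 0$ sends the singular points off to infinity, and near $p$ the potential $V_\eps$ is merely bounded (of size $O(\mu^{2k})$ after rescaling). First I would dispose of the non-small scales: for $\mu\in[\tau\mu_p/2,\tau\mu_p)$, \eqref{lem:lip} together with $\overline U\cap\overline V=\emptyset$ gives $d_g(\hbox{exp}_p(\mu X),\hbox{exp}_p(\mu Y))\geq\frac{\mu}{2}\,\hbox{dist}(\overline U,\overline V)\geq\frac{\tau\mu_p}{4}\,\hbox{dist}(\overline U,\overline V)>0$, while $R_\F$ at both points is comparable to $R_\F(p)$, so \eqref{ineq:G:1} directly yields \eqref{ineq:G:far} on this range of $\mu$; hence it remains to treat $\mu\to 0$. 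As in Theorem \ref{th:G:close}, I would first assume $\eps>0$ small, so that $V_\eps\in L^\infty(M)$ and $G_\eps$ exists, and recover $\eps=0$ at the end via \eqref{lim:Ge}. Fix open sets $U\subset\subset U'\subset\subset\rn$ and $V\subset\subset V'\subset\subset\rn$ with $\overline{U'}\cap\overline{V'}=\emptyset$, and restrict to $\mu$ so small that $\hbox{exp}_p(\mu U')$ and $\hbox{exp}_p(\mu V')$ are disjoint and contained in $B_{R_\F(p)/2}(p)$.

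Given $f\in C^\infty_c(V')$, set $f_\mu(x):=\mu^{-\frac{n+2k}{2}}f(\hbox{exp}_p^{-1}(x)/\mu)$, solve $(P-V_\eps)\varphi_{\mu,\eps}=f_\mu$ in $M$ by coercivity and elliptic regularity, and rescale $\tilde\varphi_{\mu,\eps}(X):=\mu^{\frac{n-2k}{2}}\varphi_{\mu,\eps}(\hbox{exp}_p(\mu X))$. Testing against $\varphi_{\mu,\eps}$ and using \eqref{coer:PV}, \eqref{sobo:ineq:M} and the invariance of the relevant $L^p$ norms under the rescaling gives, exactly as for \eqref{eq:31}, \eqref{eq:32} and \eqref{eq:33}, the bound $\Vert\tilde\varphi_{\mu,\eps}\Vert_{L^{\crit}(U'\cup V')}\leq C(k,L)\Vert f\Vert_{L^{\frac{2n}{n+2k}}(V')}$. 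In the rescaled metric $g_\mu:=(\hbox{exp}_p^\star g)(\mu\cdot)$, which converges to $\xi$ in $C^{2k}_{loc}$ since $\hbox{exp}_p$ is a normal chart, the equation becomes, as in \eqref{eq:tpl}, $\Delta_{g_\mu}^k\tilde\varphi_{\mu,\eps}+\sum_{l=0}^{2k-2}\mu^{2k-l}B_l(\hbox{exp}_p(\mu\cdot))\star\nabla_{g_\mu}^l\tilde\varphi_{\mu,\eps}-\mu^{2k}V_\eps(\hbox{exp}_p(\mu\cdot))\tilde\varphi_{\mu,\eps}=f$ weakly on $U'\cup V'$, with the $B_l$'s bounded by $L$ and $|\mu^{2k}V_\eps(\hbox{exp}_p(\mu X))|\leq\lambda\mu^{2k}(R_\F(p)/2)^{-2k}$ a uniformly small zeroth-order coefficient; no weight $\tilde R_{i,\F}$ appears, precisely because $R_\F$ stays away from $0$ near $p$. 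Since $f\equiv 0$ on $U'$ and $U\subset\subset U'$, the regularity Lemma \ref{lem:bis}, applied on $U'$ with $\lambda=\lambda(\gamma)$ small, gives $|\nabla^{l_1}\tilde\varphi_{\mu,\eps}(X)|\leq C\Vert\tilde\varphi_{\mu,\eps}\Vert_{L^{\crit}(U')}\leq C\Vert f\Vert_{L^{\frac{2n}{n+2k}}(V')}$ for $X\in U$ and $0\leq l_1\leq 2k-1$, uniformly for $\mu,\eps$ small.

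Now set $\tilde G_{\mu,\eps}(X,Y):=\mu^{n-2k}G_\eps(\hbox{exp}_p(\mu X),\hbox{exp}_p(\mu Y))$. Green's representation for $G_\eps$ and the change of variables give $\tilde\varphi_{\mu,\eps}(X)=\int_{V'}\tilde G_{\mu,\eps}(X,Y)f(Y)\,dv_{g_\mu}(Y)$ for $X\in U$, so the pointwise bound above together with duality (as for \eqref{ineq:G:b} and \eqref{ineq:35}) gives $\Vert\nabla_X^{l_1}\tilde G_{\mu,\eps}(X,\cdot)\Vert_{L^{\crit}(V')}\leq C$ for $X\in U$. Finally, since $(P-V_\eps)G_\eps(x,\cdot)=0$ off $\{x\}\cup\F$, the kernel $\tilde G_{\mu,\eps}(X,\cdot)$ solves on $V'$ (for $X\notin V'$) the homogeneous version of the rescaled equation above; a second application of Lemma \ref{lem:bis}, now in the variable $Y$ on $V'$, promotes this to $|\nabla_Y^{l_2}\nabla_X^{l_1}\tilde G_{\mu,\eps}(X,Y)|\leq C$ for $X\in U$, $Y\in V$ and $0\leq l_1,l_2\leq 2k-1$, which is exactly \eqref{ineq:G:far} for $\eps>0$. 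Letting $\eps\to 0$ and invoking \eqref{lim:Ge} concludes the proof.

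The only genuinely delicate point — as already in Theorem \ref{th:G:close} — is the uniformity, as $\mu\to 0$ and $\eps\to 0$, of the constants in Lemma \ref{lem:bis} and in the interior estimates applied to the family of rescaled operators displayed above. This relies on the lower-order coefficients being bounded by $L$, on $g_\mu\to\xi$ (so the operators are uniformly elliptic for $\mu$ small), and on the zeroth-order term being $O(\mu^{2k})\to 0$ near $p$ because $R_\F$ is bounded away from $0$ there; given this, the argument of Theorem \ref{th:G:close} transfers with the weights $\tilde R_{i,\F}$ replaced by constants.
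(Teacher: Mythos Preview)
Your proposal is correct and follows essentially the same route as the paper's proof: rescale at $p$, solve $(P-V_\eps)\varphi_{\mu,\eps}=f_\mu$ for $f\in C^\infty_c(V')$, use coercivity and the scale invariance of the critical norms to bound $\tilde\varphi_{\mu,\eps}$ in $L^{\crit}$, observe that the rescaled potential $\mu^{2k}V_\eps(\hbox{exp}_p(\mu\cdot))$ is uniformly bounded on $U'$ since $R_\F\geq R_\F(p)/2$ there, apply interior regularity to pass to pointwise derivative bounds, then feed this through Green's representation and duality exactly as in Theorem \ref{th:G:close}. The only cosmetic differences are that the paper invokes standard elliptic regularity (\cite{ADN} or Theorem D.2 of \cite{robert:gjms}) directly rather than Lemma \ref{lem:bis} (which here reduces to the same thing since no singular point survives in the rescaled domain), and that the paper handles all $\mu\in(0,\tau\mu_p)$ at once by choosing $U',V'$ with $\tau<\frac{8R_U+8R_V+2}{8R_{U'}+8R_{V'}+2}$, whereas you treat $\mu$ bounded away from $0$ separately via \eqref{ineq:G:1}.
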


\noindent{\it Proof of Theorem \ref{th:G:far}.} As in the proof of Theorem \ref{th:G:close}, we take $\eps>0$. We first set $U',V'$ two open subsets of $\rn$ such that
\begin{equation*}
U\subset\subset U'\subset\subset \rn \, ,\, V\subset\subset V'\subset\subset \rn\, ,\, \tau<\frac{8R_U+8R_V+2}{8R_{U'}+8R_{V'}+2} \hbox{ and }\overline{U'}\cap\overline{V'}=\emptyset.
\end{equation*}
We take $0<\mu<\mu_p$. We fix $f\in C^\infty_c(V')$ and for any $0<\mu<\tau\mu_p$, we set 
$$f_\mu(x):=\frac{1}{\mu^{\frac{n+2k}{2}}}f\left(\frac{\hbox{exp}_{p}^{-1}(x)}{\mu}\right)\hbox{ for all }x\in M.$$
As one checks, $f_\mu\in C^\infty_c(\hbox{exp}_{p}(\mu V'))$ and $\hbox{exp}_{p}(\mu V')\subset\subset M-\{p_1,...,p_N\}$. It follows from elliptic regularity (\cite{ADN} or Theorem D.4 of \cite{robert:gjms}) that there exists $\varphi_{\mu,\eps}\in H_{2k}^q(M)$ for all $q>1$ such that
\begin{equation}\label{eq:phi:alpha:far}
P\varphi_{\mu,\eps}-V_\eps\varphi_{\mu,\eps}=f_\mu\hbox{ in }M.
\end{equation}
It follows from Sobolev's embedding theorem that $\varphi_{\mu,\eps}\in C^{2k-1}(M)$. We define
\begin{equation}\label{def:tilde:phi:far}
\tilde{\varphi}_{\mu,\eps}(X):= \mu^{\frac{n-2k}{2}}\varphi_{\mu,\eps}\left(\hbox{exp}_{p}(\mu X)\right)\hbox{ for all }X\in \rn,\, |\mu X|<i_g(M).
\end{equation}
As in \eqref{eq:31}, a change of variable yields $\Vert f_\mu\Vert_{L^{\frac{2n}{n+2k}}(M)}\leq C(k)\Vert f\Vert_{L^{\frac{2n}{n+2k}}(V')}$. As for \eqref{eq:33}, we also get that
\begin{equation}\label{eq:33:far}
\Vert \varphi_{\mu,\eps}\Vert_{L^{\frac{2n}{n-2k}}(M)}\leq C(k,L) \Vert f\Vert_{L^{\frac{2n}{n+2k}}(V')}. 
\end{equation}
Taking $g_\mu:=(\hbox{exp}_{p}^\star g)(\mu\cdot)$, equation \eqref{eq:phi:alpha:far} rewrites
\begin{equation*}
\Delta_{g_\mu}^k\tilde{\varphi}_{\mu,\eps}+\sum_{l=0}^{2k-2}\mu^{2k-l}B_l(\hbox{exp}_{p}(\mu \cdot))\star\nabla_{g_\mu}^l\tilde{\varphi}_{\mu,\eps}-\mu^{2k}V_\eps(\hbox{exp}_{p}(\mu X))\tilde{\varphi}_{\mu,\eps}=f
\end{equation*}
weakly in $\rn$ where the $B_l$'s are $(l,0)-$tensors that are bounded in $L^\infty$ due to Definition \ref{def:p}. Since $V_\eps$ satisfies \eqref{hyp:Ve}, we have that
\begin{equation*}
\left|\mu^{2k}V_\eps(\hbox{exp}_{p}(\mu X))\right|\leq \lambda \mu^{2k}\min\{d_g(p_i,\hbox{exp}_{p}(\mu X) \}^{-2k}\hbox{ for all }X\in U'.
\end{equation*}
With \eqref{hyp:alpha}, we have that
$$d_g(p_i,(\hbox{exp}_{p}(\mu X) )\geq d_g(p_i,p)-\mu|X|\geq \frac{1}{2}d_g(p_i,p)\geq \frac{1}{2}R_{\F}(p)$$
for all $X\in V'$, and therefore, we get that 
\begin{equation*}
\left|\mu^{2k}V_\eps(\hbox{exp}_{p}(\mu X))\right|\leq \lambda\left(\frac{R_{\F}(p)}{\mu}\right)^{-2k} \leq C(\lambda_0)\hbox{ for all }X\in U' .
\end{equation*}
Since $f(X)=0$ for all $X\in U'$, it follows from standard regularity theory (\cite{ADN} or Theorem D.2 of \cite{robert:gjms}) that there exists $C( k,L,  U,U',V')>0$ such that
\begin{equation}\label{eq:64}
  \left|\nabla^{l_1}\tilde{\varphi}_{\mu,\eps}(X)\right|\leq C\Vert \tilde{\varphi}_{\mu,\eps} \Vert_{L^{\crit}(U')}\hbox{ for all }X\in U.
\end{equation}
Arguing as in the proof of \eqref{eq:31}, we have that
\begin{equation}\label{eq:32:far}
\Vert \tilde{\varphi}_{\mu,\eps}\Vert_{L^{\frac{2n}{n-2k}}(U')}\leq C(k)\Vert  \varphi_{\mu,\eps}\Vert_{L^{\frac{2n}{n-2k}}(M)}.
\end{equation}
Putting together \eqref{def:tilde:phi:far}, \eqref{eq:64}, \eqref{eq:32:far} and \eqref{eq:33:far} we get that
\begin{equation*}
  \left| \nabla^{l_1}\tilde{\varphi}_{\mu,\eps}(X) \right|\leq C( k,L, U,U',V',\lambda) \Vert f\Vert_{L^{\frac{2n}{n+2k}}(V')} \hbox{ for all }X\in U.
\end{equation*}
We now just follow verbatim the proof of Theorem \ref{th:G:close} above to get the conclusion \eqref{ineq:G:far}  of Theorem \ref{th:G:far}.

\section{Proof of Theorem \ref{th:Green:pointwise:BIS}}\label{sec:green4}
We prove here the pointwise estimate of Theorem \ref{th:Green:pointwise:BIS}. For the reader's convenience, we only consider the case $l_1=l_2=0$. We fix $\gamma\in (0, n-2k)$ and $\lambda>0$ given by \eqref{ineq:G:1}, Theorems \ref{th:G:close} and \ref{th:G:far}. We argue by contradiction and we assume that there is a family of operators $(P_l)_{l\in\nn}\in O_{k,L}$, a family of potentials $(\F_l,V_l)_{l\in\nn}\in {\mathcal P}_{\lambda_\gamma}(N)$, sequences $(x_l), (y_l)\in M-\{p_1,..,p_N\}$ such that $x_l\neq y_l$ for all $l\in\nn$ and
\begin{equation}\label{hyp:absurd}
\lim_{l\to +\infty}\frac{d_g(x_l,y_l)^{n-2k}|G_l(x_l,y_l)|}{\max\left(1, \frac{d_g(x_l,y_l)^2}{R_{\F_l}(x_l)R_{\F_l}(y_l)} \right)^\gamma }=+\infty,
\end{equation}
where $G_l$ is the Green's function of $P_l-V_l$ for all $l\in\nn$. We distinguish 3 cases:

\smallskip\noindent{\it Case 1:} $d_g(x_l,y_l)\not\to 0$ as $l\to +\infty$. In this case, noting that  \eqref{hyp:absurd} rewrites  $\lim_{l\to +\infty}R_{\F_l}(x_l)^\gamma R_{\F_l}(y_l)^\gamma |G_l(x_l,y_l)| =+\infty$, we then get a contradiction to \eqref{ineq:G:1}.

\smallskip\noindent{\it Case 2:} $d_g(x_l, y_l)=o(R_{\F_l}(x_l))$ as $l\to +\infty$. The triangle inequality yields
\begin{equation}\label{ineq:tri}
|R_{\F_l}(x)-R_{\F_l}(y)|\leq d_g(x,y)\hbox{ for all }x,y\in M.
\end{equation}
We then get that $d_g(x_l,y_l)=o(R_{\F_l}(y_l))$. Therefore
\eqref{hyp:absurd} rewrites
\begin{equation}\label{hyp:absurd:1}
\lim_{l\to +\infty} d_g(x_l,y_l)^{n-2k}|G_l(x_l,y_l)|=+\infty.
\end{equation}
We let $Y_l\in\rn$ be such that $y_l:=\hbox{exp}_{x_l}(d_g(x_l, y_l)Y_l)$. In particular, $|Y_l|=1$, so, up to a subsequence, there exists $Y_\infty\in\rn$ such that $\lim_{l\to +\infty}Y_l=Y_\infty$ with $|Y_\infty|=1$
We apply Theorem \ref{th:G:far} with $p:=x_l$, $\mu:=d_g(x_l, y_l)$, $U=B_{1/3}(0)$, $V=B_{1/3}(Y_\infty)$: for $l\in\nn$ large enough, taking $X=0$ and $Y=Y_l$ in \eqref{ineq:G:far}, we get that
\begin{eqnarray*}
&&d_g(x_l, y_l)^{n-2k}|G_l(x_l,y_l )|\\
&&=d_g(x_l, y_l)^{n-2k}|G_l(\hbox{exp}_{x_l}(d_g(x_l, y_l)\cdot 0),\hbox{exp}_{x_l}(d_g(x_l, y_l)\cdot Y_l) )|\leq C
\end{eqnarray*}
which contradicts \eqref{hyp:absurd:1}. This ends Case 2.

\medskip\noindent{\it Case 3:} $R_{\F}(x_l)=O(d_g(x_l, y_l))$ and $d_g(x_l, y_l)\to 0$ as $l\to +\infty$. The triangle inequality \eqref{ineq:tri} then yields $R_{\F}(y_l)=O(d_g(x_l,y_l))$. Therefore \eqref{hyp:absurd} rewrites
\begin{equation}\label{hyp:absurd:2}
\lim_{l\to +\infty}   d_g(y_l, x_l)^{n-2k-2\gamma}R_{\F}(x_l)^\gamma R_{\F}(y_l)^{\gamma}|G_l(x_l,y_l)| =+\infty.
\end{equation}
We let $i\in\{1,...,N\}$ be such that $R_{\F}(x_l)=d_g(x_l, p_i)$ for all $l\in\nn$: this is possible up to extraction since $N$ is fixed.  We then get that $d_g(x_l, p_i)=O(d_g(x_l, y_l))$ and then $d_g(y_l, p_i)\leq d_g(y_l, x_l)+ d_g(x_l, p_i)=O(d_g(x_l, y_l))$. We let $X_l, Y_l\in\rn$ be such that  $x_l:=\hbox{exp}_{p_i}(d_g(x_l, y_l)X_l)$ and  $y_l:=\hbox{exp}_{p_i}(d_g(x_l, y_l)Y_l)$. In particular, $ |X_l|=O(1)$ and $|Y_l|=O(1)$ as $l\to +\infty$ and with \eqref{lem:lip}, we get that
$$d_g(x_l, y_l)=d_g(\hbox{exp}_{p_i}(d_g(x_l, y_l)X_l),\hbox{exp}_{p_i}(d_g(x_l, y_l)Y_l))\leq 2d_g(x_l, y_l) |X_l-Y_l|,$$
and then $|X_l-Y_l|\geq 1/2$ as $l\to +\infty$. So, up to a subsequence, there exists $X_\infty,Y_\infty\in\rn$ such that  $\lim_{l\to +\infty}X_l=X_\infty$ and $\lim_{l\to +\infty}Y_l=Y_\infty$ with $|X_\infty-Y_\infty|\geq 1/2$. We apply Theorem \ref{th:G:close} with  $\mu:=d_g(x_l, y_l)$, $U=B_{1/6}(X_\infty)$ and $V=B_{1/6}(Y_\infty)$. We also adopt the notations of Theorem \ref{th:G:close}. So, for $l\in\nn$ large enough, taking $X=X_l$ and $Y=Y_l$ in \eqref{ineq:G:close}, we get that
$$\tilde{R}_{i, \F_l}(X_l)^\gamma \tilde{R}_{i, \F_l}(Y_l)^\gamma d_g(x_l,y_l)^{n-2k}|G_l(\hbox{exp}_{p_i}(d_g(x_l, y_l)X_l),\hbox{exp}_{p_i}(d_g(x_l, y_l)Y_l))|\leq C,$$
and, coming back to the definitions of $X_l$, $Y_l$ and $\tilde{R}_{i, \F_l}$, we get that
$$ R_{ \F_l}(x_l)^\gamma R_{ \F_l}(x_l)^\gamma  d_g(x_l,y_l)^{n-2k-2\gamma}|G_l(x_l,y_l)|\leq C,$$
which contradicts \eqref{hyp:absurd:2}. This ends Case 3.

\medskip\noindent Therefore, in all 3 cases, we have obtained a contradiction with \eqref{hyp:absurd}. This proves Theorem \ref{th:Green:pointwise:BIS} in the case $l_1=l_2=0$. The proof is identical for $l_1,l_2\leq 2k-1$. 

\section{A regularity Lemma}\label{sec:lemma}
\begin{defi}\label{def:P:loc} Let $(X,g)$ be a Riemannian manifold of dimension $n$ and let us fix a subdomain $\Omega\subset Int(X)$, $k\in\nn$ such that $2\leq 2k<n$ and $ L>0$. We say that an operator $P$ is of type $O_{k,L}(\Omega)$ if 
\begin{itemize}
\item[(i)] $P:=\Delta_g^k+\sum_{i=0}^{k-1} (-1)^i\nabla^i(A^{(i)} \nabla^i)$, where for all $i=0,...,k-1$, $ A^{(i)}\in C^{i}_{\chi}(M,\Lambda_{S}^{(0,2i)}(\Omega)$ is a family $C^{i}-$field of $(0,2i)-$tensors on $\Omega$;
\item[(ii)] For any $i$ , $A^{(i)}(T,S)=A^{(i)}(S,T)$ for any $(i,0)-$tensors $S$ and $T$; \par
\item[(iii)]  $\Vert A^{(i)}\Vert_{C^{i}}\leq L$ for all $i=0,...,k-1$.\par
\end{itemize}
\end{defi}
\begin{lemma}\label{lem:bis} Let $(X,g)$ be a Riemannian manifold  of dimension $n$ and $k\in\nn$ be such that $2\leq 2k<n$ and $L>0$. Fix $p>1$, $N\in\nn$. We fix  a domain $\Omega\subset Int(X)$ and a subdomain $\omega\subset\subset \Omega$. We consider  $P\in O_{k,L}(\Omega)$.
Then for all $0<\gamma<n-2k$, there exists $\lambda=\lambda(\gamma,p, L,N,\Omega, \omega)> 0$ and $C_0=C_0(X,g,\gamma,p, L,N,\Omega,\omega)>0$ such that for any $p_1,...,p_N\in X$ and $V\in L^1(\Omega)$ such that
$$|V(x)|\leq \lambda R(x)^{-2k}\hbox{ for all }x\in \Omega-\{p_1,...,p_N\},$$
where $R(x):=\min\{d_g(x,p_i)/\, i=1,...,N\}$, 
then for any $\varphi\in H_k^2(\Omega)\cap H_{2k,loc}^s(\Omega-\{p_i/\, i=1,...,N\})$ (for some $s>1$)  such that
$$P\varphi-V\cdot\varphi=0\hbox{ weakly in }H_k^2(\Omega),$$
and $V\in L^\infty(\Omega)$, we have that
\begin{equation}\label{ineq:lem:bis}
R(x)^{\gamma}|\varphi(x)|\leq C_0\cdot \Vert\varphi\Vert_{L^p(\Omega)}\hbox{ for all }x\in \omega-\{p_i/\, i=1,...,N\},
\end{equation}
and for any $0< l<2k$, there exists $C_l=C_l(X,g,\gamma,p, L,N,\Omega,\omega)>0$ such that
\begin{equation}\label{ineq:lem:l:bis}
R(x)^{\gamma+l}|\nabla^l\varphi(x)|\leq C_l\cdot \Vert\varphi\Vert_{L^p(\Omega)}\hbox{ for all }x\in \omega-\{p_i/\, i=1,...,N\}.
\end{equation}
\end{lemma}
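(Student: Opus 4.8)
The plan is to localise $\varphi$ near the singular set $\F=\{p_1,\dots,p_N\}$ and represent it through the Green's function $G_P$ of the \emph{unperturbed} operator $P$, thereby rewriting the equation $P\varphi = V\varphi$ as a fixed-point problem $\Psi\mapsto \psi_0+G_P\ast(V\Psi)$ in a weighted $L^\infty$ space. The two facts that make this work are that $\gamma<n-2k$ (so that convolution with $d_g(\cdot,y)^{2k-n}$ against the Hardy weight $d_g(\cdot,p_j)^{-2k-\gamma}$ reproduces, via Giraud's Lemma, the weight $d_g(\cdot,p_j)^{-\gamma}$) and that $\lambda$ is small (so that the fixed-point map is a contraction). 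Once \eqref{ineq:lem:bis} is known, \eqref{ineq:lem:l:bis} is an immediate consequence of rescaling.

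First I would set things up. A point $p_j$ lying outside a fixed neighbourhood of $\omega$ only changes $R$ there by a multiplicative constant, so one may assume $\F\subset\omega'$ for some $\omega\subset\subset\omega'\subset\subset\Omega$. Fix a ball $B$ with $\bar\omega'\subset\subset B\subset\subset\Omega$ and let $G_P$ be a Green's function for $P$ on $B$; by elliptic theory (\cite{ADN}, or the estimates recalled in \cite{robert:gjms}, cf.\ \eqref{est:G:34}) it satisfies $|\nabla_y^l G_P(x,y)|\le C\,d_g(x,y)^{2k-n-l}$ on a slightly smaller ball, with $C=C(k,L,B)$ uniform. Choose $\chi\in C^\infty_c(B)$ with $\chi\equiv 1$ near $\bar\omega'\cup\F$ and set $\Phi:=\chi\varphi$. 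Since $V\in L^\infty(\Omega)$, elliptic regularity gives $\varphi\in C^{2k-1}(\Omega)$, hence $\Phi\in C^{2k-1}(B)\cap H^2_k(B)$, and $P\Phi = V\Phi + g$ with $g:=[P,\chi]\varphi$ supported in $\mathrm{supp}\,\nabla\chi$, a compact subset of $B$ disjoint from $\F$ and from $\omega$. On that set $R$ is bounded below, so $|V|\le\lambda R^{-2k}$ is bounded there by a constant depending only on $\lambda$ and the geometry (not on $\|V\|_\infty$), and interior elliptic regularity yields $\|g\|_{L^\infty}\le C\|\varphi\|_{L^p(\Omega)}$. Green's representation then reads, for $x\in B$,
\begin{equation*}
\Phi(x)=\int_B G_P(x,y)\,V(y)\Phi(y)\,dv_g(y)+\psi_0(x),\qquad \psi_0(x):=\int_B G_P(x,y)\,g(y)\,dv_g(y),
\end{equation*}
and $\|\psi_0\|_{L^\infty(B)}\le C\|\varphi\|_{L^p(\Omega)}$ since $\int_B d_g(x,y)^{2k-n}\,dv_g(y)\le C$ uniformly in $x$.

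Next I would run the fixed-point argument. For $\beta\in(0,n-2k)$ introduce the Banach space $\mathcal X_\beta:=\{\Psi:\ \|\Psi\|_{*,\beta}:=\sup_{x\in B}R(x)^{\beta}|\Psi(x)|<\infty\}$ and the operator $T\Psi(x):=\int_B G_P(x,y)V(y)\Psi(y)\,dv_g(y)$. Using $|G_P|\le C d_g^{2k-n}$, $|V|\le\lambda R^{-2k}$, the elementary bound $R(y)^{-2k-\beta}\le\sum_{j=1}^N d_g(y,p_j)^{-2k-\beta}$, and Giraud's Lemma — applicable precisely because $2k\in(0,n)$ and $n-2k-\beta\in(0,n)$ with sum $n-\beta<n$, which forces $\beta<n-2k$ — one gets $\int_B d_g(x,y)^{2k-n}d_g(y,p_j)^{-2k-\beta}\,dv_g(y)\le C(\beta)\,d_g(x,p_j)^{-\beta}$, hence, summing over $j$ and using $\sum_j d_g(x,p_j)^{-\beta}\le N R(x)^{-\beta}$,
\begin{equation*}
\|T\Psi\|_{*,\beta}\le C(\beta,k,L,B)\,\lambda N\,\|\Psi\|_{*,\beta}.
\end{equation*}
Therefore, for $\lambda$ small depending on $\gamma,N,k,L,\Omega,\omega$ (but not on $\|V\|_\infty$), the affine map $\Psi\mapsto\psi_0+T\Psi$ is a contraction of $\mathcal X_\gamma$; its unique fixed point is $\sum_{m\ge 0}T^m\psi_0$ and obeys $\|\cdot\|_{*,\gamma}\le 2\|\psi_0\|_{*,\gamma}\le C\|\varphi\|_{L^p(\Omega)}$ (here $\sup_B R^\gamma<\infty$ is used to place $\psi_0\in\mathcal X_\gamma$). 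On the other hand $\Phi$ is a fixed point which, being continuous and compactly supported, lies in $\mathcal X_\gamma$; by uniqueness $\Phi$ coincides with the Neumann sum, so $\sup_B R(x)^\gamma|\Phi(x)|\le C\|\varphi\|_{L^p(\Omega)}$, and since $\Phi=\varphi$ on $\{\chi=1\}\supset\bar\omega$ this is \eqref{ineq:lem:bis}. Finally \eqref{ineq:lem:l:bis} follows by rescaling: on $B_{R(x_0)/2}(x_0)$ one has $R\asymp R(x_0)$, so after dilating by $R(x_0)$ the equation becomes uniformly elliptic with a bounded potential, and applying interior Schauder/$L^p$ estimates to the rescaled solution (bounded by $C R(x_0)^{-\gamma}\|\varphi\|_{L^p}$ thanks to \eqref{ineq:lem:bis}) gives $|\nabla^l\varphi(x_0)|\le C R(x_0)^{-\gamma-l}\|\varphi\|_{L^p(\Omega)}$.

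The genuinely non-routine point is the \emph{uniformity} of the contraction: one must verify that the operator norm of $T$ is bounded by $\lambda$ times a constant depending only on $\gamma$, $N$ and the geometry, with no dependence on $\|V\|_\infty$ — this is exactly what lets a single smallness threshold $\lambda=\lambda(\gamma)$ serve for the whole (degenerating) class of potentials $V$. The remaining ingredients — existence and pointwise bounds for the local Green's function $G_P$, interior regularity for $\psi_0$ and for the gradient bounds, and the bookkeeping of the $N$ singular points inside Giraud's Lemma — are standard, the last being the only new feature compared with the case $N=1$ treated in \cite{robert:gjms}.
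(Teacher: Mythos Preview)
Your approach is genuinely different from the paper's. The paper argues by contradiction: assuming the $l=0$ estimate fails along a sequence $(P_\lambda, V_\lambda, \psi_\lambda, x_\lambda)$, it rescales by $r_\lambda = R_\lambda(x_\lambda)$ around the nearest singular point $p_{i_0,\lambda}$, so that the lower-order terms of $P_\lambda$ (which pick up factors $r_\lambda^{2k-l}\to 0$) disappear and the rescaled solutions converge to a nontrivial $\tilde\psi$ on $\mathbb{R}^n\setminus\{\tilde p_{i,0}\}$ satisfying $\Delta_\xi^k\tilde\psi=0$ and $|\tilde\psi|\le C\tilde R^{-\gamma}$; integrating by parts against the explicit Poisson kernel $C_{n,k}|x-\tilde x_0|^{2k-n}$ on punctured balls then forces $\tilde\psi(\tilde x_0)=0$, a contradiction. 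The derivative estimates are obtained, as in your proposal, by a further rescaling once the $l=0$ bound is known. Your direct fixed-point/Neumann-series argument is more constructive and explains transparently why the threshold on $\lambda$ depends only on $\gamma$ and $N$; the paper's blowup argument is more robust.

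There is, however, a genuine gap in your argument. The existence of a Green's function $G_P$ on the fixed ball $B$ with a \emph{uniform} bound $|G_P(x,y)|\le C(k,L,B)\,d_g(x,y)^{2k-n}$ is not justified for $P\in O_{k,L}(\Omega)$: Definition~\ref{def:P:loc} carries \emph{no} coercivity hypothesis (contrast Definition~\ref{def:p}), so $0$ may lie in the Dirichlet spectrum of $P$ on $B$, and even when it does not the constant in the pointwise bound need not be uniform over the class. Your citation of \eqref{est:G:34} concerns the coercive global operator on the compact manifold and does not transfer. Passing to $P+\Lambda$ via G\aa rding's inequality does not rescue the contraction: the extra term $\Lambda\,G_{P+\Lambda}\!*\!\Phi$ has $\mathcal X_\gamma$-operator norm of order one, not order $\lambda$. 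This is exactly what the paper's blowup sidesteps, since after rescaling the limiting operator is the explicit $\Delta_\xi^k$, whose fundamental solution is written down by hand. In all of the paper's \emph{applications} of this lemma the operators are in fact coercive (either on the compact manifold, or close to $\Delta_\xi^k$ after rescaling), so your argument would go through there; but for the lemma as stated you must either add a coercivity assumption or find another route to a uniform representation kernel.
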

The sequel of this section is devoted to the proof of this Lemma, first in a particular case and then in the general case. We will often refer to \cite{robert:gjms} where the case $N=1$ was considered.

\medskip\noindent{\bf Case 1: Proof of the Lemma for $l=0$ and under an extra assumption.} We take $l=0$ and we assume that there exists  $U\subset\subset\Omega$ such that
\begin{equation}\label{hyp:lemma:1}
p_1,...,p_N\in U.
\end{equation}
We let $W$ be an open subset such that $\omega, U\subset\subset W\subset\subset \Omega$. We prove the result by contradiction and we assume that \eqref{ineq:lem:bis} does not hold. Then, as in \cite{robert:gjms}, for all $\lambda>0$ small enough,  there exists $P_\lambda \in O_{k,L}(\Omega)$, $V_\lambda\in L^1(\Omega)$, $p_{i,\lambda}\in U$ for $i=1,...,N$,   $R_\lambda(x):=\min\{d_g(x,p_{i,\lambda})/\, i=1,..,N\}$ and  $\psi_\lambda\in H_k^2(\Omega)$  such that
\begin{equation}\label{eq:75}
\left\{\begin{array}{l}
(P_\lambda-V_\lambda)\psi_\lambda=0 \hbox{ weakly in }H_k^2(\Omega)\cap H_{2k,loc}^s(\Omega-\{p_{i,\lambda}/\, i=1,..,N\})\\
\Vert\psi_\lambda\Vert_{L^p(\Omega)}=1\\
|V_{\lambda}(x)|\leq \lambda R_\lambda(x)^{-2k}\hbox{ for all }x\in \Omega-\{p_{i,\lambda}/\, i=1,..,N\}\\
V_{\lambda}\in L^\infty(\Omega)\\
\sup_{x\in \overline{W}} R_\lambda(x)^{\gamma}|\psi_\lambda(x)|>\frac{1}{\lambda}\to +\infty\hbox{ as }\lambda\to 0.
\end{array}\right\}
\end{equation} 
Without loss of generality, we assume that there are $p_{i,0},..., p_{N,0}\in \overline{U}\subset W$ such that $\lim_{\lambda\to 0}p_{i,\lambda}=p_{i,0}$. Since $V_\lambda\in L^\infty(\Omega)$, by regularity theory (\cite{ADN} or Theorem  D.1 of \cite{robert:gjms}), we get that $\psi_\lambda\in C^0(\overline{W})$. Therefore, there exists $x_\lambda\in \overline{W}$ such that
\begin{equation}\label{lim:xlambda}
R_\lambda(x_\lambda)^{\gamma}|\psi_\lambda(x_\lambda)|=\sup_{x\in \overline{W}} R_\lambda(x)^{\gamma}|\psi_\lambda(x)|>\frac{1}{\lambda}\to +\infty
\end{equation}
as $\lambda\to 0$. Up to extraction, we assume that for all $i=1,..,N$, $\lim_{\lambda\to 0}p_{i,\lambda}=p_i\in \overline{U}\subset W$.

\medskip\noindent{\bf Step 1.1:} we claim that there exists $i_0\in 1,...,N$ such that $\lim_{\lambda\to 0}x_\lambda=p_{i_0}$ and $R_\lambda(x_\lambda)=d_g(x_\lambda, p_{i_0,\lambda})$ for $\lambda\to 0$. 

\smallskip\noindent We prove the claim. Up to extraction, there exists $i_0\in \{1,...,N\}$ such that $R_\lambda(x_\lambda)=d_g(x_\lambda, p_{i_0,\lambda})$ for $\lambda\to 0$. For any $r>0$, we have that $|V_\lambda(x)|\leq \lambda r^{-2k}$ for all $x\in \Omega- \cup_{i=1}^NB_{2r}(p_{i,0})$. So, with regularity theory (\cite{ADN} or Theorems D.1 and D.2 of \cite{robert:gjms}), we get that for all $q>1$, then $\Vert \psi_\lambda\Vert_{H_{2k}^q(W- \cup_{i=1}^N B_{3r}(p_{i,0}))}\leq C(r, q, L,p)\Vert\psi_\lambda\Vert_{L^p(\Omega)}=C(r,q, L,p)$. Taking $q>\frac{n}{2k}$, we get that $|\psi_\lambda(x)|\leq C(r,q, L,p)$ for all $x\in W- \cup_{i=1}^NB_{3r}(p_{i,0})$. Since $R_\lambda(x_\lambda)$ is bounded, it follows from \eqref{lim:xlambda} that $\lim_{\lambda\to 0}d_g(x_\lambda, p_{i_0,\lambda})=\lim_{\lambda\to 0}R_\lambda(x_\lambda)=0$. The claim is proved.

\medskip\noindent{\bf Step 1.2: Convergence after rescaling.} We set $r_\lambda:=d_g(x_\lambda,p_{i_0,\lambda})>0$.  Since $p_{i_0,\lambda}\in U\subset\subset W$, there exists $\delta>0$ independent of the $p_{i,\lambda}$'s and $\lambda$ such that $B_\delta(p_{i_0,\lambda})\subset W$. We define
\begin{equation*}
\tilde{\psi}_\lambda(X):=\frac{\psi_\lambda(\hbox{exp}_{p_{i_0,\lambda}}(r_\lambda X))}{\psi_\lambda(x_\lambda)}\hbox{ for }X\in\rn \hbox{ such that }|X|<\frac{\delta}{r_\lambda}.
\end{equation*}
We define $\tilde{x}_\lambda\in \rn$ such that $x_\lambda=\hbox{exp}_{p_{i_0,\lambda}}(r_\lambda \tilde{x}_\lambda)$. In particular $|\tilde{x}_\lambda|=1$. We set
$$I:=\{i\in \{1,...,N\}\hbox{ such that }d_g(p_{i,\lambda},p_{i_0,\lambda})=O(r_\lambda)\}.$$
For any $i\in I$, we define $\tilde{p}_{i,\lambda}\in\rn$ such that $p_{i,\lambda}:=\hbox{exp}_{p_{i_0,\lambda}}(r_\lambda \tilde{p}_{i,\lambda})$. Note that  $i_0\in I$ and $\tilde{p}_{i_0,\lambda}=0$. We define $\tilde{R}_{I,\lambda}(x):=\min_{i\in I}|x-\tilde{p}_{i,\lambda}|$ for all $x\in \rn$. We fix $K>0$. As one checks, using \eqref{lem:lip}, there exists $C=C(K)>0$ such that
\begin{equation}\label{ineq:R}
C^{-1} r_\lambda \tilde{R}_{I,\lambda}(x)\leq R_\lambda( \hbox{exp}_{p_{i_0,\lambda}}(r_\lambda x))\leq C r_\lambda \tilde{R}_{I,\lambda}(x)
\end{equation}
for all $x\in B_K(0)$. Therefore, for all $x\in B_K(0)$, we have that
\begin{eqnarray*}
R_\lambda^\gamma ( \hbox{exp}_{p_{i_0,\lambda}}(r_\lambda x))|\psi_\lambda(\hbox{exp}_{p_{i_0,\lambda}}(r_\lambda x))|\leq R_\lambda^\gamma(x_\lambda) |\psi_\lambda(x_\lambda)|
\end{eqnarray*}
\begin{equation}\label{bnd:tpl}
\hbox{so that }\tilde{R}_{I,\lambda}^\gamma(x) |\tpl(x)|\leq C^{-1}\hbox{ for all }x\in B_K(0) \hbox{ and }\tpl(\tilde{x}_\lambda)=1.
\end{equation}
Defining $g_\lambda:=(\hbox{exp}_{p_{i_0,\lambda}}^\star g)(r_\lambda\cdot)$ where $\hbox{exp}_{p }^\star g$ is the pull-back metric of $g$, the equation satisfied by $\tpl$ in \eqref{eq:75} rewrites
\begin{equation}\label{eq:tpl:bis}
\Delta_{g_\lambda}^k\tpl+\sum_{l=0}^{2k-2}r_\lambda^{2k-l}(B_l)_{\lambda}(\hbox{exp}_{p_{i_0,\lambda}}(r_\lambda \cdot))\star\nabla_{g_\lambda}^l\tpl-r_\lambda^{2k}V_\lambda(\hbox{exp}_{p_{i_0,\lambda}}(r_\lambda X))\tpl=0
\end{equation}
weakly in $B_{K}(0)$ for some bounded coefficient $((B_l)_\lambda)$'s. Note that
\begin{equation}\label{bnd:Vl}
|r_\lambda^{2k}V_\lambda(\hbox{exp}_{p_{i_0,\lambda}}(r_\lambda x))|\leq \lambda C \tilde{R}_{I,\lambda}(x)^{-2k}
\end{equation}
for all $\lambda>0$ and $X\in B_K(0)-\{\tilde{p}_{i,\lambda}/\, i\in I\}$. Up to extraction, we set $\tilde{p}_{i,0}:=\lim_{\lambda\to 0} \tilde{p}_{i,\lambda}$ for all $i\in I$. With equation \eqref{eq:tpl:bis}, the bounds \eqref{bnd:tpl}, \eqref{bnd:Vl} and the bounds of the coefficients $(B_l)_\lambda$, it follows from regularity theory (\cite{ADN} or Theorems D.1 and D.2 of \cite{robert:gjms}) that, up to extraction, there exists $\tilde{\psi}\in C^{2k}(\rn-\{\tilde{p}_{i,0}/\, i\in I\})$ such that $\tpl\to \tilde{\psi}$ in $C^{2k-1}_{loc}(\rn-\{\tilde{p}_{i,0}/\, i\in I\})$ as $\lambda\to 0$ and $ \Delta^k_\xi\tilde{\psi}=0$ in $\rn-\{\tilde{p}_{i,0}/\, i\in I\}$. We define $\tilde{x}_0:=\lim_{\lambda\to 0}\tilde{x}_\lambda$, so that, with \eqref{ineq:R}, we get that  $\tilde{R}_{I,\lambda}(\tilde{x}_\lambda)\geq c$, and then, passing to the limit, we get $\tilde{x}_0\not\in \{\tilde{p}_{i,0}/\, i\in I\}$. Finally, passing to the limit in \eqref{bnd:tpl}, we get that
 \begin{equation}\label{eq:psi}
\left\{\begin{array}{l}
\tilde{\psi}\in C^{2k}(\rn-\{\tilde{p}_{i,0}/\, i\in I\})\\
\Delta^k_\xi\tilde{\psi}=0 \hbox{  in }\rn-\{\tilde{p}_{i,0}/\, i\in I\}\\
|\tilde{\psi}(\tilde{x}_0)|=1\hbox{ with }\tilde{x}_0\in\rn- \{\tilde{p}_{i,0}/\, i\in I\}\\
\tilde{R}_{I,0}(x)^\gamma|\tilde{\psi}(x)|\leq C\hbox{ for all }x\in \rn-\{\tilde{p}_{i,0}/\, i\in I\}
\end{array}\right\}
\end{equation} 
where $\tilde{R}_{I,0}(x):=\min_{i\in I}|x-\tilde{p}_{i,0}|$. By standard elliptic theory (\cite{ADN} or Theorem D.2 of \cite{robert:gjms}), for any $l=1,...,2k$, there exists $C_l>0$ such that
\begin{equation*} 
\tilde{R}_{I,0}(x)^{\gamma+l}|\nabla^l\tilde{\psi}(x)|\leq C_l  \hbox{ for all }x\in \rn- \{\tilde{p}_{i,0}/\, i\in I\}.
\end{equation*}
\noindent{\bf Step 1.3: Contradiction via Green's formula.} Let us consider the Poisson kernel of $\Delta^k$ at $\tilde{x}_0$, namely
\begin{equation}\label{def:gamma}
\Gamma_{\tilde{x}_0}(x):=C_{n,k}|x-\tilde{x}_0|^{2k-n}\hbox{ for all }x\in \rn-\{\tilde{x}_0\},
\end{equation}
where $C_{n,k}^{-1}:=(n-2)\omega_{n-1}\Pi_{i=1}^{k-1}(n-2k+2(i-1))(2k-2i)$. Up to taking a subset of $I$, we assume that $\tilde{p}_{i,0}\neq\tilde{p}_{j,0}$ for all $i\neq j$ in $I$ (note that $i_0\in I$). Let us choose $R>3+\max\{|\tilde{p}_{i,0}|/\, i\in I\}$ and $0<\eps<\frac{1}{2}\min_{i\neq j\in I}\{|\tilde{p}_{i,0}-\tilde{p}_{j,0}|\}$ and define
$$\Omega_{R,\eps}:=B_R(0) - \left( B_{\eps}(\tilde{x}_0)\cup\bigcup_{i\in I}B_{R^{-1}}(\tilde{p}_{i,0})\right).$$
Integrating by parts, we have that
\begin{equation*}
\int_{\Omega_{R,\eps}}(\Delta^k_\xi\Gamma_{\tilde{x}_0})\tilde{\psi}\, dx=\int_{\Omega_{R,\eps}}\Gamma_{\tilde{x}_0}(\Delta^k_\xi\tilde{\psi})\, dx+\int_{\partial\Omega_{R,\eps}}\sum_{j=0}^{k-1}\tilde{B}_j(\Gamma_{\tilde{x}_0},\tilde{\psi})\, d\sigma
\end{equation*}
$$\hbox{where }\tilde{B}_j(\Gamma_{\tilde{x}_0},\tilde{\psi}):=-\partial_\nu\Delta_\xi^{k-j-1}\Gamma_{\tilde{x}_0}\Delta_\xi^j\tilde{\psi}+\Delta_\xi^{k-j-1}\Gamma_{\tilde{x}_0}\partial_\nu\Delta_\xi^j\tilde{\psi}.$$
The integrals on $\Omega_{R,\eps}$ vanish since $\Delta^k_\xi\Gamma_{\tilde{x}_0}=\Delta^k_\xi\tilde{\psi}=0$. As in \cite{robert:gjms}, the boundary terms involving $R$ go to $0$ as $R\to +\infty$, and when $\eps\to 0$, the terms also go to $0$ when $j\neq 0$. Finally, we get
$$\int_{\partial B_{\eps}(\tilde{x}_0)}\partial_\nu\Delta_\xi^{k-1}\Gamma_{\tilde{x}_0} \tilde{\psi}\, d\sigma=o(1)\hbox{ as }\eps\to 0.$$
The definition \eqref{def:gamma} yields $-\partial_\nu\Delta_\xi^{k-1}\Gamma_{\tilde{x}_0}(x)=\omega_{n-1}^{-1}|x-\tilde{x}_0|^{1-n}$ for all $x\neq \tilde{x}_0$. So that, passing to the limit, we get that $\tilde{\psi}(\tilde{x}_0)=0$, contradicting \eqref{eq:psi}. So \eqref{eq:75} does not hold. This proves \eqref{ineq:lem:bis} for $l=0$ under the assumption \eqref{hyp:lemma:1}. 

\medskip\noindent{\bf Case 2: Proof of the Lemma.} We   now     prove \eqref{ineq:lem:l:bis}. We argue by contradiction. We fix $N$, $\omega$, $\Omega$, $\gamma\in (0,n-2k)$, $p>1$ and $l\in \{0,..,2k-1\}$. We assume that \eqref{ineq:lem:l:bis} does not hold. Then, as above, we get the existence of   $p_{1,\lambda},...,p_{N,\lambda}\in \Omega$, $\psi_\lambda\in H_k^2(\Omega)$ and $x_\lambda\in \omega$ such that
\begin{equation}\label{eq:75:bis}
\left\{\begin{array}{l}
(P_\lambda-V_\lambda)\psi_\lambda=0 \hbox{ weakly in }H_k^2(\Omega)\cap H_{2k,loc}^s(\Omega-\{p_{i,\lambda}/\, i=1,..,N\})\\
\Vert\psi_\lambda\Vert_{L^p(\Omega)}=1\\
|V_{\lambda}(x)|\leq \lambda R_\lambda(x)^{-2k}\hbox{ for all }x\in \Omega-\{p_{i,\lambda}/\, i=1,..,N\}\\
V_{\lambda}\in L^\infty(\Omega)\\
\lim_{\lambda\to 0} R_\lambda(x_\lambda)^{\gamma+l}|\nabla^l\psi_\lambda(x_\lambda)|=+\infty.
\end{array}\right\}
\end{equation} 
Since $(P_\lambda-V_\lambda)\psi_\lambda=0$, elliptic theory yields uniform boundedness of $\Vert\psi_\lambda\Vert_{C^{2k-1}}$ outside the $p_{i,\lambda}$'s. The last assertion of \eqref{eq:75:bis} then yields $\lim_{\lambda\to 0}R_\lambda(x_\lambda)=0$. 
We let $i_0\in\{1,...,N\}$ be such that, up to extraction, $R_\lambda(x_\lambda)=d_g(x_\lambda, p_{i_0,\lambda})$ for all $\lambda\to 0$. We let $x_0\in \overline{\omega}$ such that $\lim_{\lambda\to 0}x_\lambda=\lim_{\lambda\to 0}p_{i_0,\lambda}=x_0$. We set $I_0:=\{i=1,...,N\hbox{ such that }\lim_{\lambda\to 0}p_{i,\lambda}=x_0\}$ and $\delta>0$ such that $B_{4\delta}(x_0)\subset \Omega$ and $d_g(p_{i,\lambda},x_0)\geq 3\delta$ for all $i\not\in I_0$ and all $\lambda>0$. It follows from \eqref{eq:75:bis} that
\begin{equation*}
|V_{\lambda}(x)|\leq \lambda \tilde{R}_\lambda(x)^{-2k}\hbox{ for all }x\in B_{2\delta}(x_0)-\{p_{i,\lambda}/\, i\in I_0\}
\end{equation*}
where $\tilde{R}_\lambda(x):=\min\{d_g(x, p_{i,\lambda})/\, i\in I_0\}$. Up to taking $\lambda$ smaller, we assume that $x_\lambda\in B_{\delta}(x_0)$ and $p_{i,\lambda}\in B_\delta(x_0)\subset\subset B_{2\delta}(x_0)$ for all $i\in I_0$ and $\lambda>0$. It  follows from the proof of \eqref{ineq:lem:bis} under the assumption \eqref{hyp:lemma:1} that there exists $C>0$ such that 
\begin{equation}\label{ineq:R:C0}
\tilde{R}_\lambda(x )^{\gamma }| \psi_\lambda(x )|\leq C\hbox{ for all }x\in B_{\delta}(x_0)-\{p_{i,\lambda}/\, i\in I_0\}.
\end{equation}
We set $r_\lambda:=\tilde{R}_\lambda(x_\lambda)=R_\lambda(x_\lambda)=d_g(x_\lambda, p_{i_0,\lambda})$ and for all $X\in B_{r_\lambda^{-1}\delta}(0)\subset\rn$, we define $\tilde{\psi}_\lambda(X):=r_\lambda^\gamma \psi_\lambda\left(\hbox{exp}_{p_{i_0,\lambda}}(r_\lambda X)\right)$. We let $\tilde{x}_\lambda\in B_2(0)$ be such that $x_\lambda:=\hbox{exp}_{p_{i_0,\lambda}}(r_\lambda \tilde{x}_\lambda)$. Taking the same notations as in Step 1.2, \eqref{ineq:R:C0} reads $\tilde{R}_{\lambda, I}(X)^{\gamma }| \tilde{\psi}_\lambda(X )|\leq C$ for all $|X|<r_\lambda^{-1}\delta$. As in Step 1.2, $\tilde{\psi}_\lambda$ satisfies an elliptic PDEs with coefficients that are bounded outside the $\{\tilde{p}_{i,\lambda}/\, i\in I\subset I_0\}$. Here again, elliptic theory yields an upper bound on the $C^{2k-1}-$norm far from $\{\tilde{p}_{i,\lambda}/\, i\in I\}$. The last assertion of \eqref{eq:75:bis} reads $\lim_{\lambda\to 0}|\nabla^l\tilde{\psi}_\lambda(\tilde{x}_\lambda)|=+\infty$ with $|\tilde{x}_\lambda-\tilde{p}_{i,\lambda}|\geq \epsilon_0>0$ for all $\lambda\to 0$ and $i\in I$. This contradicts the boundedness of the $C^{2k-1}-$norm.  This proves \eqref{ineq:lem:l:bis} and ends the proof of Lemma \ref{lem:bis}.

\end{document}